\newcommand{\beq}{\begin{equation}}
\newcommand{\eeq}{\end{equation}}
\newcommand{\ben}{\begin{eqnarray}}
\newcommand{\een}{\end{eqnarray}}
\newcommand{\beno}{\begin{eqnarray*}}
\newcommand{\eeno}{\end{eqnarray*}}
\newtheorem{thm}{Theorem}[section]
\newtheorem{lem}[thm]{Lemma}
\newtheorem{prop}[thm]{Proposition}
\newtheorem{coro}[thm]{Corollary}
\newtheorem{rmk}[thm]{Remark}
\renewcommand{\theequation}{\thesection.\arabic{equation}}
\title{\textbf{On the De Giorgi type conjecture for an elliptic system
modeling phase separation}}
\author{Kelei Wang
\thanks{This work was done while the author was a postdoc at the University of Sydney and was
supported by the Australian Research Council. I would like to express my sincere thanks to
Prof. E. N. Dancer, S. Terracini and J. C. Wei for valuable
discussions related to this problem. I am also grateful to referees for their careful reading and useful suggestions.}
\\
{\small  Wuhan Institute of Physics and Mathematics,}\\
{\small Chinese Academy of Sciences, Wuhan 430071, China}\\
{\small wangkelei@wipm.ac.cn } \ }
\date{}
\begin{document}
\maketitle
\begin{abstract}
In this paper we study the one dimensional symmetry problem of
entire solutions to the problem
\[\Delta u=uv^2,\ \ \Delta v=vu^2,\ \ u,v>0~~\text{in}~~\mathbb{R}^n,\]
for all $n\geq 2$. We prove that, if a solution $(u,v)$ is a local
minimizer and has a linear growth at infinity, then it is one
dimensional, i.e. depending only on one variable. In the proof we
also obtain the global Lipschitz continuity of solutions only under
the linear growth assumption.
\end{abstract}

\noindent {\sl Keywords:} {\small elliptic systems, phase
separation, one dimensional symmetry, sliding method.}\

\vskip 0.2cm

\noindent {\sl AMS Subject Classification (2000):} {\small 35B06,
35B08, 35B25, 35J91.}

\renewcommand{\theequation}{\thesection.\arabic{equation}}
\setcounter{equation}{0}

\tableofcontents

\section{Introduction}
\numberwithin{equation}{section}
 \setcounter{equation}{0}
 In this paper, we study the one dimensional symmetry problem for
solutions of the following two component elliptic system in
$\mathbb{R}^n$:
\begin{equation}\label{equation}
\left\{ \begin{aligned}
 &\Delta u=uv^2, \\
 &\Delta v=vu^2.
                          \end{aligned} \right.
\end{equation}
All of the solutions considered in this paper are positive classical
solutions, that is, $u>0$ and $v>0$ and they are smooth.
\par
We say a function $u$, defined on $\mathbb{R}^n$, is one dimensional
if there exists a unit vector $e\in\mathbb{R}^n$ and a function $f$ defined on
$\mathbb{R}^1$, such that $u(x)\equiv f(x\cdot e)$.

 The system
\eqref{equation} arises from many fields in physics such as
Bose-Einstein condensation and nonlinear optics.
It is used to describe the ``Phase Separation" phenomena.  For more background, see \cite{blwz, DWZ2011, NTTV,
TT2011} and references therein.

In Berestycki-Lin-Wei-Zhao \cite{blwz}, inspired by the De Giorgi
conjecture for the Allen-Cahn equation (cf. \cite{Savin}), they ask
whether there is one dimensional symmetry for entire solutions of
\eqref{equation}. In \cite{blwz} they also proved the existence, symmetry
and nondegeneracy of the solution to the one-dimensional problem
\begin{equation}
u^{\prime\prime}= uv^2, \ \ v^{\prime\prime}=v u^2, \ \ u, v>0
\ ~~\mbox{in}~~\mathbb{R}.
\end{equation}
In particular they showed that entire solutions of this problem are
reflectionally symmetric, i.e., there exists $x_0$ such that $
u(x-x_0)= v(x_0-x)$. In \cite{BTWW}, together with Berestycki, Terracini and Wei, the author also proved that, up to a
scaling and translation, this entire solution is unique. This
solution can be trivially extended to $\mathbb{R}^n$ for all $n\geq
2$, which gives a solution of \eqref{equation} with a linear growth.
We also note that, it was proved in  Noris-Tavares-Terracini-Verzini \cite{NTTV} that the linear
growth is the lowest possible for solutions to \eqref{equation}.
More precisely, if there exists $\alpha \in (0,1)$ such that
\begin{equation*}
u(x)+v(x)\leq C (1+|x|)^{\alpha},\ \ \ \mbox{in}\ \mathbb{R}^n,
\end{equation*}
then $u, v \equiv 0$.

Unlike the Allen-Cahn equation, 
where minimal hypersurfaces play an important role in the limiting
problem, the limiting problem of \eqref{equation} is related to
harmonic functions. One typical result is (cf. Dancer-Wang-Zhang \cite{DWZ2011} and
Tavares-Terracini \cite{TT2011}), as $\kappa\to+\infty$, any sequence of uniformly
bounded solutions $(u_\kappa, v_\kappa)$ to the problem
\begin{equation}\label{equation scaled}
\left\{
\begin{aligned}
 &\Delta u_\kappa=\kappa u_\kappa v_\kappa^2, \\
 &\Delta v_\kappa=\kappa v_\kappa u_\kappa^2,
                          \end{aligned} \right.
\end{equation}
converges uniformly (up to a subsequence of $\kappa\to+\infty$) to
$(w^+,w^-)$. Here $w$ is a harmonic function and $w^+$ is its
positive part, $w^-=(-w)^+$ the negative part.

Note that solutions of \eqref{equation scaled} are critical points
of the energy functional (under suitable boundary conditions)
\begin{equation}\label{functional}
E_\kappa(u,v):=\int|\nabla u_\kappa|^2+|\nabla v_\kappa|^2+\kappa
u_\kappa^2v_\kappa^2.
\end{equation}

 For a solution of
\eqref{equation} with a linear growth at infinity, by performing
suitable blowing-down procedure (see Section 3 for details), the
blowing down sequence converges to $((e\cdot x)^+,(e\cdot x)^-)$ for
some constant vector $e$. This means that the level sets of $u-v$
are asymptotically flat at infinity. Thus it is very natural to
conjecture that these level sets are flat and the De Giorgi type
conjecture holds under this linear growth condition.

In \cite{blwz} and \cite{BTWW} (see also Farina \cite{F2} for related results), several results in this direction were
obtained when the space dimension $n=2$. These works assume the
solution satisfies a monotone condition or a stability condition. In
\cite{BTWW}, we also proved that, when $n=2$, for every $d\geq 2$,
there is a solution of \eqref{equation}, such that $u-v$ is
asymptotic to $\text{Re}(x+\sqrt{-1}y)^d$ (i.e. a homogeneous
harmonic polynomial of degree $d$) at infinity. These examples show
that we cannot remove the linear growth assumption in the De Giorgi
type conjecture.

 In this paper we prove the following result
 for all $n\geq 2$.
\begin{thm}\label{main result}
If $(u,v)$ is a local minimizer of the problem \eqref{equation} in
$\mathbb{R}^n$, and there exists a constant $C>0$ such that for any
$x\in\mathbb{R}^n$,
\begin{equation}\label{linear growth}
u(x)+v(x)\leq C(1+|x|),
\end{equation}
then $(u,v)$ is one dimensional.
\end{thm}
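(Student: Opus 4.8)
The plan is to combine a blow-down analysis with the sliding method, using the monotonicity that local minimality forces. First I would establish the global Lipschitz bound: under \eqref{linear growth}, a rescaling argument together with the convergence result of Dancer--Wang--Zhang and Tavares--Terracini for \eqref{equation scaled} shows that on any ball $B_R(x_0)$ the pair $(u,v)$ is close, after scaling, to $(w^+,w^-)$ for a harmonic $w$; elliptic estimates for harmonic functions then give $|\nabla u|+|\nabla v|\le C$ uniformly. The key point here is that the ``clean up'' lemma (uniform smallness of $uv$ away from the free boundary) upgrades $C^0$ convergence to $C^1$ convergence on compact sets, so the Lipschitz constant does not blow up.

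Next I would run the blow-down. Set $u_R(x)=R^{-1}u(Rx)$, $v_R(x)=R^{-1}v(Rx)$; by the Lipschitz bound these are locally uniformly bounded and, being local minimizers of the scaled functional $E_{R^2}$, a subsequence converges to $(W^+,W^-)$ with $W$ harmonic and globally Lipschitz, hence $W$ is an affine function $e\cdot x + c$; since $u,v>0$ forces the origin-behavior, one gets $W=e\cdot x$ up to rotation, i.e.\ the blow-down limit is $((x_1)^+,(x_1)^-)$. This says the nodal set of $u-v$ is asymptotically the hyperplane $\{x_1=0\}$, and in particular, for every direction $\tau$ with $\tau_1>0$, the difference $u-v$ is ``asymptotically monotone'' in $\tau$.

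The heart of the argument is then a sliding/moving-plane scheme in the spirit of Berestycki--Caffarelli--Nirenberg adapted to this system. Consider the function $w:=u-v$, which satisfies $\Delta w = w(u^2+uv+v^2)/1$-type structure; more usefully, one shows $u$ is monotone increasing and $v$ monotone decreasing in the $x_1$ direction by sliding: for $t>0$ compare $(u(x+te_1),v(x+te_1))$ with $(u(x),v(x))$, using the maximum principle for the (cooperative after the substitution $w=u-v$) linearized system and the asymptotic flatness from the blow-down to start the sliding from $t=+\infty$. Once monotonicity in $x_1$ is known for the minimizer, the standard De Giorgi machinery applies: the derivatives $\partial_{x_1}u>0$, $\partial_{x_1}v<0$ serve as positive solutions of the linearized system, so by a Liouville-type / stability argument (the minimality gives the stability inequality $\int |\nabla\varphi|^2 \ge \int$ second-variation terms) every other directional derivative is a constant multiple of $\partial_{x_1}u$, which forces $(u,v)=(f(x\cdot e),g(x\cdot e))$.

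The main obstacle I anticipate is making the sliding method rigorous for the \emph{system}: the maximum principle is not directly available for $(u,v)$ since the system is competitive, not cooperative. The fix is to work with $w=u-v$ and a companion quantity (e.g.\ $u+v$ or $uv$), exploit that along the nodal set one of the components vanishes, and use the linear growth together with the blow-down flatness to guarantee the comparison holds for large translation parameter $t$; then one decreases $t$ and shows the first contact cannot occur at a finite interior point. Controlling the behavior near the free boundary $\{u=v=0\}$ in this comparison — where the equations degenerate — is the delicate step, and this is presumably where the local minimality (through the monotonicity formula and the regularity of the limiting free boundary being a hyperplane) is used most essentially.
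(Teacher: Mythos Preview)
Your overall architecture (blow-down $\Rightarrow$ asymptotic flatness $\Rightarrow$ sliding $\Rightarrow$ monotonicity $\Rightarrow$ one-dimensionality) matches the paper's, but there is a genuine missing step that makes the sliding argument unworkable as written: you only obtain the blow-down limit $((e\cdot x)^+,(e\cdot x)^-)$ along \emph{subsequences}, and the direction $e$ may a priori rotate with the subsequence and with the base point. The sliding comparison needs a fixed direction $e_n$ and, more importantly, \emph{uniform} closeness of $(u,v)$ to $((e_n\cdot(x-x_0))^+,(e_n\cdot(x-x_0))^-)$ at large scales, uniformly in $x_0\in\{u=v\}$. Without that, you cannot initiate the sliding (Step~1 of the paper's Proposition~\ref{lem 9.5}) because the ``good'' region $\{|u-v|>M\}$ is not known to be a fixed Lipschitz graph, and there is no reason the translation $t\tau$ takes $\{|u-v|\le M\}$ into $\{u-v\ge 2M\}$ for large $t$.

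This uniqueness of the blow-down direction is exactly where local minimality enters, and it is not where you locate it. The paper uses minimality only once (Proposition~\ref{lem comparison of the energy}): by comparing $(u,v)$ with the harmonic replacement $(\varphi^+,\varphi^-)$ in $B_R(x_0)$ one gets the quantitative estimate $\int_{B_R(x_0)}|\nabla(u-v-\varphi_{R,x_0})|^2\le CR^{n-1/2}$. The exponent below $n$ is what forces $\nabla\varphi_{R,x_0}$ to Cauchy-converge as $R\to\infty$ to a constant vector $e_0$ (Section~\ref{prop 6.1}'s argument), yielding a unique blow-down direction. Getting this estimate, in turn, requires the global Lipschitz bound, which in the paper is \emph{not} a soft consequence of convergence to harmonic functions: it goes through a sharp Alt--Caffarelli--Friedman monotonicity formula (Theorem~\ref{thm ACF monotonicity}) to obtain nondegeneracy and uniform linear growth \emph{from every point of} $\{u=v\}$ (Corollaries~\ref{coro 5.4}--\ref{coro 5.10}). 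Finally, your closing ``De Giorgi / stability Liouville'' step is only known to give one-dimensionality in low dimensions; the paper instead enlarges the cone of monotone directions to a full half-space by the sliding method again (Section~9), which works for all $n$.
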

Here a local minimizer means, for every smooth functions
$(\bar{u},\bar{v})$ such that $(\bar{u},\bar{v})=(u,v)$ outside a
ball $B_R(0)$, we have
\begin{equation}\label{local energy minimizer}
\int_{B_R(0)}|\nabla u|^2+|\nabla v|^2+u^2v^2 \leq
\int_{B_R(0)}|\nabla \bar{u}|^2+|\nabla
\bar{v}|^2+\bar{u}^2\bar{v}^2.
\end{equation}
This is only a technical assumption and we believe it can be
removed. In our proof, it is only used to compare the difference of
energy between $(u,v)$ and a harmonic replacement (see Proposition
\ref{lem comparison of the energy}). Note that minimizers are always stable, so when $n=2$ the above result is contained in the result in \cite{BTWW} for stable solutions. Let us mention that in a recent preprint \cite{F-S}, A. Farina and N. Soave solved the Gibbons conjecture for this class of problem. There they used the monotonicity condition rather than the minimizing condition. The relations between these conditions still need further exploration.

Next we explain briefly the strategy of our proof and the
organization of this paper. In Section 2 we collect some useful
results such as the Almgren type monotonicity formula for solutions
to \eqref{equation}. In Section 3 we perform a blowing down analysis
and show that $(u,v)$ is asymptotically flat at infinity.

Section 4 is devoted to the proof of an Alt-Caffarelli-Friedman
monotonicity formula for solutions of \eqref{equation} with linear growth at infinity (see Theorem
\ref{thm ACF monotonicity}), which says
\[e^{-Cr^{-1/2}}r^{-4}\left(\int_{B_r(0)}\frac{|\nabla
u(y)|^2+u(y)^2v(y)^2}{|y|^{n-2}}dy\right)\left( \int_{B_r(0)}\frac{|\nabla
v(y)|^2+u(y)^2v(y)^2}{|y|^{n-2}}dy\right)\] is non-decreasing in $r>1$.
This can be seen as a sharp form of the Alt-Caffarelli-Friedman monotonicity formula
developed by Noris-Tavares-Terracini-Verzini in \cite{NTTV}. To
achieve this we use a special Steiner symmetrization rearrangement for the two component
functions $(u,v)$ on the unit sphere $\mathbb{S}^{n-1}$. This allow
us to reduce a minimization problem in higher dimensional sphere to
a one dimension problem, where by the results in \cite{blwz} we have
better controls such as uniform Lipschitz continuity. (Note that at
present the uniform Lipschitz continuity of solutions of
\eqref{equation scaled} is only known when the space dimension
$n=1$.) Similar ideas have already been used in \cite{BTWW} (see
Theorem 5.6 therein).

 This monotonicity formula can be used to
give a lower bound of the growth rate of $(u,v)$. For example, for a
solution $(u,v)$ with a linear growth, using this monotonicity
formula we can prove a nondegeneracy result (Corollary \ref{coro
5.4}): there exists a constant $C$ such that
\[\int_{B_R(0)}u+v\geq CR^{n+1}.\]
More importantly, this monotonicity formula and some of its
consequences hold for all $x\in\{u=v\}$, with a constant
$C$ independent of $x$ and the radius $R$. This fact, together with the results in Section 3, implies that
at large scale (uniformly with respect to the base point
$x\in\{u=v\}$), $(u,v)$ is close to $((e\cdot x)^+,(e\cdot x)^-)$
for some unit vector $e$. This then enables us to prove the global
Lipschitz continuity of $u$ and $v$ (see Theorem \ref{thm 6.1}
).

In Section 6, we use the global Lipschitz property and the locally
energy minimizing property of $(u,v)$ to deduce the following
crucial estimate
\[\int_{B_R(x)}|\nabla (u-v-\varphi)|^2\leq CR^{n-1/2},\]
where $C$ is independent of $x$ and $R$, and $\varphi$ is the
harmonic extension of $u-v$ from $\partial B_R(x)$ to $B_R(x)$. This
is the only place where we need the energy minimizing property of
$(u,v)$.

The exponent $1/2$ in this estimate implies the existence of a
unique vector $e$ such that for every $x_0\in\mathbb{R}^n$
\[\lim\limits_{R\to+\infty}R^{-1}u(x_0+Rx)=(e\cdot x)^+,\ \ \ \
\lim\limits_{R\to+\infty}R^{-1}v(x_0+Rx)=(e\cdot x)^-.\]
 That is, the blowing down sequence has a unique limit. See Section 7.

After proving this, we can establish some good properties in the place far away from
the transition part $\{u=v\}$, such as the existence of a cone of monotonicity for $u-v$.
Indeed, now the situation is quite similar to the Gibbons
conjecture for the Allen-Cahn equation, that is, when $x_n\to\pm\infty$,
we have some uniform convergence of $(u,v)$. (However, we need to note that here $u$ and
$v$ are unbounded and the limit as $x_n\to\pm\infty$ is infinity.)
In Section 8, we use the sliding method by adapting the argument of Farina
\cite{F} ( see also Berestycki-Hamel-Monneau
\cite{BHM}) to prove the existence of a cone of monotonicity for
$(u,v)$. That is, for every unit vector $\tau\in C(e_n,3/4)=\{\tau:
\tau\cdot e_n\geq 3/4\}$,
\[\tau\cdot\nabla u\geq 0,\ \ \ \ \tau\cdot\nabla v\leq 0~~\text{in}~~\mathbb{R}^n.\]
 The main idea is to
propagate the good properties in the part far away from the
transition part $\{u=v\}$ to the part near $\{u=v\}$. Similar ideas
are used in Section 9 to enlarge the cone of monotonicity to a half
space $C(e_n,0)$, which then implies our main result Theorem
\ref{main result}.

In the appendix we give a proof of a local interior version of the
uniform H\"{o}lder estimate from Noris-Tavares-Terracini-Verzini \cite{NTTV}.

In this paper we say a constant $C$ is universal if it is
independent of the base point $x\in\mathbb{R}^n$ and the radius $R$. (In some cases it depends on the solution itself.)
It may be different from line to line.

\section{Some preliminary results}
\numberwithin{equation}{section}
 \setcounter{equation}{0}

In this section we recall some monotonicity formulas for solutions
to \eqref{equation}. Then we will list some useful results, which
will be used many times throughout this paper.
\begin{prop}\label{monotonocity 1}
For $r>0$ and $x\in\mathbb{R}^n$,
$$D(r;x):=r^{2-n}\int_{B_r(x)}|\nabla u|^2+|\nabla v|^2+u^2v^2$$
is nondecreasing in $r$.
\end{prop}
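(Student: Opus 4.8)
The plan is to show that the logarithmic derivative of $D(r;x)$ is nonnegative by differentiating under the integral sign and using the equation together with a Rellich–Pohozaev type identity. Without loss of generality I would fix $x=0$ and write $D(r)=r^{2-n}\int_{B_r}e(u,v)$ where $e(u,v)=|\nabla u|^2+|\nabla v|^2+u^2v^2$ is the energy density. Differentiating,
\[
D'(r)=(2-n)r^{1-n}\int_{B_r}e(u,v)+r^{2-n}\int_{\partial B_r}e(u,v),
\]
so the claim $D'(r)\geq 0$ is equivalent to the inequality $r\int_{\partial B_r}e(u,v)\geq (n-2)\int_{B_r}e(u,v)$.

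The main tool is a Pohozaev-type identity on $B_r$: multiplying the first equation by $x\cdot\nabla u$, the second by $x\cdot\nabla v$, integrating over $B_r$ and integrating by parts produces
\[
\int_{B_r}\Big[(n-2)\big(|\nabla u|^2+|\nabla v|^2\big)+n\,u^2v^2\Big]
= r\int_{\partial B_r}\Big[|\nabla u|^2+|\nabla v|^2+u^2v^2-2\big(\partial_r u\big)^2-2\big(\partial_r v\big)^2\Big].
\]
Here the key point is that $u v^2 (x\cdot\nabla u)+v u^2(x\cdot\nabla v)=\tfrac12 x\cdot\nabla(u^2v^2)$, so the nonlinear terms combine into a divergence and contribute the $n\,u^2v^2$ term on the left and the $u^2v^2$ boundary term on the right. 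Rearranging this identity, the interior term $(n-2)\int_{B_r}e(u,v)$ differs from $r\int_{\partial B_r}e(u,v)$ exactly by $-2r\int_{\partial B_r}(\partial_r u)^2+(\partial_r v)^2 \le 0$ together with a favourably-signed $u^2v^2$ term (since $n\ge 2$, the coefficient $n$ on the left exceeds what is needed). Thus $D'(r)\geq 0$.

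The step I expect to be the main (though still routine) obstacle is making the integration by parts rigorous: the boundary integrals and the Pohozaev identity require enough decay/regularity, but since solutions are smooth positive classical solutions this is not a genuine difficulty on any fixed ball $B_r$, and I would simply carry out the computation for each fixed $r$. A secondary point worth stating carefully is the sign bookkeeping of the $u^2v^2$ terms: one must check that the net coefficient of $\int_{B_r}u^2v^2$ coming out of the rearrangement is nonnegative precisely when $n\ge 2$ (at $n=2$ the quadratic-gradient part is scale invariant and the monotonicity is driven entirely by the nonlinear term and the radial-derivative defect). Once these are in place, the inequality $r\int_{\partial B_r}e(u,v)\geq (n-2)\int_{B_r}e(u,v)$ follows and hence $D(r;x)$ is nondecreasing.
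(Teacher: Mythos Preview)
Your plan is correct and is essentially the standard argument: the paper itself does not spell out the proof (it cites Caffarelli--Lin), but it records the exact identity your Pohozaev computation yields, namely
\[
\frac{d}{dr}D(r;x)=2r^{2-n}\int_{\partial B_r(x)}\big(u_r^2+v_r^2\big)+2r^{1-n}\int_{B_r(x)}u^2v^2,
\]
which is precisely what drops out of your rearrangement $r\int_{\partial B_r}e-(n-2)\int_{B_r}e=2r\int_{\partial B_r}\big((\partial_r u)^2+(\partial_r v)^2\big)+2\int_{B_r}u^2v^2$. So your approach and the paper's are the same.
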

For a proof, see \cite[Lemma 2.1]{C-L 2}. In fact we have
\begin{equation}\label{3.1}
\frac{d}{dr}D(r;x)=2r^{2-n}\left(\int_{\partial
B_r(x)}u_r^2+v_r^2\right)
 +2r^{1-n}\int_{B_r(x)}u^2v^2.
\end{equation}
Next, define
$$H(r;x):=r^{1-n}\int_{\partial B_r(x)}u^2+v^2.$$
By noting that
\begin{equation}\label{3.2}
\frac{d}{dr}H(r;x)=2r^{1-n}\int_{B_r(x)}|\nabla u|^2+|\nabla
v|^2+2u^2v^2,
\end{equation}
we can prove the following (see for example \cite[Proposition 5.2]{BTWW})
\begin{prop}\label{monotonocity}
{\bf (Almgren monotonicity formula.)} For $r>0$ and
$x\in\mathbb{R}^n$,
$$N(r;x,u,v):=\frac{r\int_{B_r(x)}|\nabla u|^2+|\nabla v|^2+u^2v^2}{\int_{\partial B_r(x)}u^2+v^2}$$
is nondecreasing in $r$.
\end{prop}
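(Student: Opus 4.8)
The plan is to show directly that $\frac{d}{dr}\log N(r;x)\ge 0$. After a translation assume $x=0$, and write $\mathcal{D}(r)=\int_{B_r}|\nabla u|^2+|\nabla v|^2+u^2v^2$ and $\mathcal{H}(r)=\int_{\partial B_r}u^2+v^2$, so that in terms of the rescaled quantities of Proposition~\ref{monotonocity 1} one has $N(r;0,u,v)=D(r;0)/H(r;0)$ with $D(r;0)=r^{2-n}\mathcal{D}(r)$ and $H(r;0)=r^{1-n}\mathcal{H}(r)$. Since $u,v>0$ are smooth and nonconstant (a constant component would force the other to vanish), $\mathcal{D},\mathcal{H}>0$, so $N$ is well-defined and positive and it suffices to prove $D'H-DH'\ge 0$.

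Next I would record the integration-by-parts identities. Testing the two equations with $u$ and $v$ respectively and adding gives
\[
\mathcal{D}(r)=\int_{\partial B_r}(uu_r+vv_r)-\int_{B_r}u^2v^2\ \le\ \int_{\partial B_r}(uu_r+vv_r)=:S(r),
\]
the inequality because $u^2v^2>0$, and the divergence theorem gives $S(r)=\int_{B_r}|\nabla u|^2+|\nabla v|^2+2u^2v^2>0$. Combining this with the derivative formulas \eqref{3.1} and \eqref{3.2}, rewritten in terms of $D(r;0)$ and $H(r;0)$, yields
\[
\frac{D'(r;0)}{D(r;0)}=\frac{2\int_{\partial B_r}(u_r^2+v_r^2)+\tfrac{2}{r}\int_{B_r}u^2v^2}{\mathcal{D}(r)},
\qquad
\frac{H'(r;0)}{H(r;0)}=\frac{2S(r)}{\mathcal{H}(r)}.
\]

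The monotonicity then follows from two elementary estimates. First, discarding the nonnegative term $\tfrac{2}{r}\int_{B_r}u^2v^2$ and using $\mathcal{D}(r)\le S(r)$,
\[
\frac{D'(r;0)}{D(r;0)}\ \ge\ \frac{2\int_{\partial B_r}(u_r^2+v_r^2)}{S(r)}.
\]
Second, Cauchy--Schwarz on $\partial B_r$ applied to the pair of vector fields $(u,v)$ and $(u_r,v_r)$ gives
\[
S(r)^2=\Bigl(\int_{\partial B_r}(uu_r+vv_r)\Bigr)^2\le\Bigl(\int_{\partial B_r}u^2+v^2\Bigr)\Bigl(\int_{\partial B_r}u_r^2+v_r^2\Bigr)=\mathcal{H}(r)\int_{\partial B_r}(u_r^2+v_r^2).
\]
Together these give $\frac{D'}{D}\ge\frac{2\int_{\partial B_r}(u_r^2+v_r^2)}{S}\ge\frac{2S}{\mathcal{H}}=\frac{H'}{H}$, whence $N'(r;0)\ge 0$.

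The point to be careful about is simply that the coupling term $u^2v^2$ now appears in $\mathcal{D}$, in $D'$ and in $H'$, so the classical Almgren argument for harmonic functions cannot be quoted verbatim; the key observation is that after integration by parts this term enters $\mathcal{D}$ with a favorable sign (the displayed identity), while its contribution to $D'$ is manifestly nonnegative, so both can be thrown away without harm. One should also dispatch the degenerate cases $\mathcal{H}(r)=0$ or $S(r)=0$, which would force $u\equiv v\equiv 0$ on $B_r$ and are therefore excluded; everything else is a routine computation with \eqref{3.1}, \eqref{3.2} and the divergence theorem.
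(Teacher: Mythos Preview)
Your proof is correct and is precisely the standard argument the paper has in mind: the paper does not spell out a proof but merely points to the identities \eqref{3.1}, \eqref{3.2} and cites \cite[Proposition 5.2]{BTWW}, and your computation (the integration-by-parts identity $\mathcal{D}(r)\le S(r)=\int_{\partial B_r}(uu_r+vv_r)$ together with Cauchy--Schwarz on $\partial B_r$) is exactly how those identities combine to give $N'\ge 0$. There is nothing to add.
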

In this paper, we often omit $u,v$ in $N(r;x,u,v)$ if no ambiguity
occurs. We also denote it by $N(r)$ if $x=0$.

\begin{prop}\label{monotonocity 2}
If $N(r_0;x)\geq d$, then for $r>r_0$,
$$r^{1-n-2d}\int_{\partial B_r(x)}u^2+v^2$$
is nondecreasing in $r$.
\end{prop}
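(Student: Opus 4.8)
The plan is to deduce the statement from the Almgren monotonicity formula (Proposition \ref{monotonocity}) by examining the logarithmic derivative of $H(r;x)$. First I would use the identity \eqref{3.2}. Writing $H(r):=H(r;x)$ and noting that $H(r)>0$ for all $r>0$ (since $u,v>0$), dividing \eqref{3.2} by $H(r)=r^{1-n}\int_{\partial B_r(x)}u^2+v^2$ gives
\[
\frac{d}{dr}\log H(r)=\frac{2\int_{B_r(x)}|\nabla u|^2+|\nabla v|^2+2u^2v^2}{\int_{\partial B_r(x)}u^2+v^2}\,.
\]
The numerator here exceeds $2\int_{B_r(x)}|\nabla u|^2+|\nabla v|^2+u^2v^2$ by the nonnegative quantity $2\int_{B_r(x)}u^2v^2$, so discarding this surplus yields the lower bound
\[
\frac{d}{dr}\log H(r)\ \geq\ \frac{2\int_{B_r(x)}|\nabla u|^2+|\nabla v|^2+u^2v^2}{\int_{\partial B_r(x)}u^2+v^2}=\frac{2\,N(r;x)}{r}\,.
\]

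Next I would invoke Proposition \ref{monotonocity}: since $r\mapsto N(r;x)$ is nondecreasing, the hypothesis $N(r_0;x)\geq d$ forces $N(r;x)\geq d$ for every $r\geq r_0$. Hence, for $r>r_0$,
\[
\frac{d}{dr}\log H(r)\ \geq\ \frac{2d}{r}=\frac{d}{dr}\log r^{2d},
\]
so that $\frac{d}{dr}\log\bigl(r^{-2d}H(r)\bigr)\geq 0$ on $(r_0,\infty)$, i.e. $r^{-2d}H(r)$ is nondecreasing there. Since $r^{-2d}H(r)=r^{1-n-2d}\int_{\partial B_r(x)}u^2+v^2$, this is exactly the assertion, and one concludes by integrating the differential inequality (which is legitimate because $H(r;x)$ is smooth in $r$).

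I do not anticipate a real obstacle here: the proof is essentially a one-line computation once \eqref{3.2} and Proposition \ref{monotonocity} are available. The only points that require care are (i) the bookkeeping of the factor $2$ multiplying $u^2v^2$ in \eqref{3.2}, which must be compared with the coefficient $1$ in the definition of $N(r;x)$, and (ii) the sign of the discarded term $2\int_{B_r(x)}u^2v^2$, which has to be nonnegative so that the inequality is preserved after it is dropped.
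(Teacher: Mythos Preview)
Your proof is correct and follows essentially the same approach as the paper: both use identity \eqref{3.2} together with the monotonicity of $N(r;x)$ from Proposition \ref{monotonocity} to show that $r^{-2d}H(r)$ is nondecreasing. The only cosmetic difference is that you work with the logarithmic derivative of $H(r)$, whereas the paper differentiates $r^{1-n-2d}\int_{\partial B_r(x)}u^2+v^2$ directly and checks the sign of the resulting expression.
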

\begin{proof}
Direct calculation using \eqref{3.2} shows
\begin{eqnarray*}
&&\frac{d}{dr}\left(r^{1-n-2d}\int_{\partial B_r(x)}u^2+v^2\right)\\
&=&
-2dr^{-n-2d}\left(\int_{\partial B_r(x)}u^2+v^2\right)+2r^{1-n-2d}\left(\int_{B_r(x)}|\nabla u|^2+|\nabla v|^2+2u^2v^2\right)\\
&\geq& 0.
\end{eqnarray*}
Here we have used Proposition \ref{monotonocity}, in particular, the
fact that $N(r)\geq d$ for every $r\geq r_0$.
\end{proof}
The following result gives a doubling property of $(u,v)$, which is Proposition 5.3 in \cite{BTWW}.
\begin{prop}\label{doubling property}
Let $R>1$ and let $(u,v)$ be a solution of \eqref{equation} on
$B_R$. If $N(R)\leq d$, then for every $1<r_1\leq r_2\leq R$
\begin{equation}\label{eq:h_monotone}
\dfrac{H(r_2)}{H(r_1)}\leq e^{d}\dfrac{r_2^{2d}}{r_1^{2d}}
\end{equation}
\end{prop}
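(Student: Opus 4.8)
The plan is to imitate the proof of Proposition \ref{monotonocity 2}, turning the inequality $N(r)\le d$ into a differential inequality for a rescaled version of $H$, and then to absorb the zeroth order (reaction) term by an integrating factor / Gr\"onwall argument.

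First I would record that, by Proposition \ref{monotonocity} together with $N(R)\le d$, one has $N(r)\le d$ for every $r\in(0,R]$, and that $H$ is nondecreasing by \eqref{3.2}. Carrying out exactly the computation in the proof of Proposition \ref{monotonocity 2}, but now invoking $N(r)\le d$ in place of $N(r)\ge d$, I would obtain
\[
\frac{d}{dr}\Big(r^{1-n-2d}\int_{\partial B_r}u^2+v^2\Big)
=2r^{-n-2d}\Big(r\big(\textstyle\int_{B_r}|\nabla u|^2+|\nabla v|^2+2u^2v^2\big)-d\big(\int_{\partial B_r}u^2+v^2\big)\Big)
\le 2r^{1-n-2d}\int_{B_r}u^2v^2 ,
\]
the inequality coming from $r\int_{B_r}(|\nabla u|^2+|\nabla v|^2+u^2v^2)\le d\int_{\partial B_r}(u^2+v^2)$. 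Since $r^{1-n-2d}\int_{\partial B_r}u^2+v^2=r^{-2d}H(r)$, integrating this from $r_1$ to $r_2$ yields
\[
r_2^{-2d}H(r_2)\le r_1^{-2d}H(r_1)+\int_{r_1}^{r_2}2r^{1-n-2d}\Big(\int_{B_r}u^2v^2\Big)\,dr .
\]

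The remaining task, which I expect to be the actual work, is to control the last integral; the target is
\[
\int_{r_1}^{r_2}2r^{1-n-2d}\Big(\int_{B_r}u^2v^2\Big)\,dr\ \le\ (e^{d}-1)\,r_1^{-2d}H(r_1),
\]
after which the two displays combine into $r_2^{-2d}H(r_2)\le e^{d}r_1^{-2d}H(r_1)$, i.e. \eqref{eq:h_monotone}. Here I would use the first variation identity \eqref{3.1}, which gives the pointwise bound $2r^{1-n}\int_{B_r}u^2v^2\le D'(r)$ (with $D(r):=D(r;0)$), together with $D(r)=N(r)H(r)\le dH(r)$ and the monotonicity of $D$ from Proposition \ref{monotonocity 1}; the hypothesis $r_1>1$ enters precisely to make the weighted tail $\int_{r_1}^{\infty}r^{-2d}(\,\cdot\,)$ of the reaction energy summable, so that it contributes only the bounded factor $e^{d}$ rather than an extra power of $r_2/r_1$. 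The main obstacle is that the naive estimate $\int_{B_r}u^2v^2\le\int_{B_r}(|\nabla u|^2+|\nabla v|^2+u^2v^2)=r^{n-2}D(r)$ is too lossy — by itself it only produces the exponent $4d$ — so one has to keep the radial derivative term appearing in \eqref{3.1} and exploit the decay of the weight $r^{-2d}$ and the doubling of $D$ carefully in order to close the argument with the sharp exponent $2d$.
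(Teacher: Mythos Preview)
The paper does not actually prove this proposition; it merely cites it as Proposition~5.3 of \cite{BTWW}. So there is no proof in the present paper to compare your attempt against.

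That said, your proposal is not a complete argument, and the gap is real. Your differential inequality $(r^{-2d}H(r))'\le 2r^{1-n-2d}\int_{B_r}u^2v^2$ is correct, and you rightly observe that the crude bound $\int_{B_r}u^2v^2\le r^{n-2}D(r)\le d\,r^{n-2}H(r)$ only produces the exponent $4d$. But the last paragraph, where you propose to recover the sharp exponent $2d$ using $2r^{1-n}\int_{B_r}u^2v^2\le D'(r)$ together with $D\le dH$ and the monotonicity of $D$, does not go through as sketched. If one inserts $g(r):=2r^{1-n}\int_{B_r}u^2v^2\le D'(r)$ and integrates $\int_{r_1}^{r_2}r^{-2d}D'(r)\,dr$ by parts, the boundary term $r_2^{-2d}D(r_2)\le d\,r_2^{-2d}H(r_2)$ reintroduces precisely the quantity one is trying to bound; the resulting inequality only closes when $d<1$, and then with exponent $2d/(1-d)$ rather than $2d$. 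Working instead with $(\log H)'=\tfrac{2N(r)}{r}+\tfrac{g(r)}{H(r)}$ and the same tools yields the same circularity. Your description of the role of the hypothesis $r_1>1$ --- that it makes a ``weighted tail $\int_{r_1}^\infty r^{-2d}(\cdot)$ summable'' --- does not correspond to a concrete step either: the integral is over the finite interval $[r_1,r_2]$, and nothing in the listed ingredients furnishes an extra decaying factor in $g(r)/H(r)$ that would render $\int_{r_1}^{r_2}\tfrac{g}{H}$ bounded independently of $r_2$.

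In short, you have correctly isolated the difficulty but have not supplied the idea that overcomes it. Completing the argument requires an estimate to the effect that $\int_{r_1}^{r_2}\frac{2r^{1-n}\int_{B_r}u^2v^2}{H(r)}\,dr$ is bounded by a constant depending only on $d$; this does not follow from the three facts you cite.
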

All of these results hold for solutions of \eqref{equation scaled},
if we have the corresponding bounds on
\[N_\kappa(r;x,u_\kappa,v_\kappa):=\frac{r\int_{B_r(x)}|\nabla u_\kappa|^2+|\nabla v_\kappa|^2+\kappa u_\kappa^2v_\kappa^2}{\int_{\partial B_r(x)}u_\kappa^2+v_\kappa^2}.\]
Note that if $(u_\kappa,v_\kappa)$ is a solution of \eqref{equation scaled}, $N_\kappa(r;x,u_\kappa,v_\kappa)$ is still monotone in $r$.

Next we list three useful results. The first one is Lemma 4.4 in
\cite{C-T-V 3}.
\begin{lem}\label{estimate exponential decay}
If in the ball $B_{2R}(0)$, $u\in C^2$ satisfies
\begin{equation} \label{1.1.1}
\left\{ \begin{aligned}
&\Delta u\geq M u , \\
&u\geq 0, \\
&u\leq A ,
                          \end{aligned} \right.
\end{equation}
then
$$\sup_{B_R(0)}u\leq C_{1}(n)Ae^{-C_{2}(n)RM^{\frac{1}{2}}},$$
where $C_{1}(n)$ and $C_{2}(n)$ depend on the dimension $n$ only.
\end{lem}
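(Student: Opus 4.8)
The plan is to establish the estimate by comparing $u$ with a barrier for the operator $\Delta-M$; concretely, I would use a radial supersolution of $\Delta\phi=M\phi$ that equals $A$ on $\partial B_{2R}(0)$ and decays toward the centre. I assume $M>0$ throughout (for $M=0$ the function $u$ is subharmonic and bounded by $A$, so the claim holds with any $C_1(n)\ge 1$).

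Assume first $n\ge 3$ and consider the explicit function
\[
\phi(x):=A\left(\frac{2R}{|x|}\right)^{(n-1)/2}e^{\sqrt M\,(|x|-2R)},\qquad x\in B_{2R}(0)\setminus\{0\}.
\]
A direct computation gives $\Delta\phi=\left(M+\tfrac{(n-1)(3-n)}{4|x|^{2}}\right)\phi\le M\phi$ on $B_{2R}(0)\setminus\{0\}$ (here $n\ge 3$ is used); moreover $\phi\equiv A$ on $\partial B_{2R}(0)$ and $\phi(x)\to+\infty$ as $x\to 0$. Since $\Delta(u-\phi)\ge M(u-\phi)$, $u\le A=\phi$ on $\partial B_{2R}(0)$, and $u-\phi\to-\infty$ at the origin, the maximum principle for $\Delta-M$ on the bounded domain $B_{2R}(0)\setminus\{0\}$ — valid because $M>0$ — yields $u\le\phi$ there; in particular $\sup_{\partial B_R(0)}u\le 2^{(n-1)/2}A\,e^{-\sqrt M\,R}$. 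Finally $\Delta u\ge Mu\ge 0$, so $u$ is subharmonic on $B_{2R}(0)$ and $\sup_{B_R(0)}u=\sup_{\partial B_R(0)}u$, which proves the lemma for $n\ge 3$ with $C_1(n)=2^{(n-1)/2}$ and $C_2(n)=1$. The case $n=2$ reduces to this: view $u$ as the function $\tilde u(x_1,x_2,x_3)=u(x_1,x_2)$ on $B_{2R}(0)\subset\mathbb{R}^{3}$, which satisfies the same hypotheses with the same $M$ and $A$, and restrict the $\mathbb{R}^{3}$ conclusion to $x_3=0$.

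An alternative argument, uniform in $n$, reduces matters to a one–dimensional ODE comparison. For $x_0\in B_R(0)$ (so that $B_R(x_0)\subset B_{2R}(0)$) let $w(\rho)$ be the spherical mean of $u$ over $\partial B_\rho(x_0)$; averaging the differential inequality over spheres yields $w''+\tfrac{n-1}{\rho}w'\ge Mw$ on $(0,R]$, with $0\le w\le A$ and $w(0)=u(x_0)$. Comparing $w$ with $\psi(\rho):=A\,G_n(\sqrt M\rho)/G_n(\sqrt M R)$, where $G_n$ is the classical smooth, even, strictly increasing solution of $g''+\tfrac{n-1}{\rho}g'=g$ with $g(0)=1$, $g'(0)=0$ (a modified Bessel function), one obtains $w\le\psi$ on $[0,R]$ from the maximum principle for the radial operator $\tfrac{d^{2}}{d\rho^{2}}+\tfrac{n-1}{\rho}\tfrac{d}{d\rho}-M$: a positive maximum of $w-\psi$ cannot occur in the interior, cannot occur at $\rho=R$, and cannot occur at $\rho=0$ (there $w'(0)=\psi'(0)=0$ and the operator degenerates to $n g''(0)-Mg(0)$, which forces $(w-\psi)''(0)>0$). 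Hence $u(x_0)=w(0)\le\psi(0)=A/G_n(\sqrt M R)$; since $G_n(t)\ge 1$ and $G_n(t)\ge c_n t^{-(n-1)/2}e^{t}$ for $t\ge 1$ (from $I_\nu(t)\sim e^{t}/\sqrt{2\pi t}$), one has $1/G_n(t)\le C_1(n)e^{-t/2}$ for all $t\ge 0$, giving $\sup_{B_R(0)}u\le C_1(n)A\,e^{-\frac12\sqrt M\,R}$.

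The argument is elementary; the only points that require a little care are the behaviour at the origin — handled either by letting the barrier blow up there and invoking the maximum principle on the punctured ball, or by the explicit origin analysis in the ODE comparison — and pinning down the correct exponential rate, which reflects the classical fact that radial solutions of $\Delta\phi=M\phi$ grow like $|x|^{-(n-1)/2}e^{\sqrt M|x|}$; this is exactly what dictates the exponent $-(n-1)/2$ in the barrier $\phi$ above.
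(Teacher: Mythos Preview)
The paper does not supply its own proof of this lemma: it is quoted directly from Conti--Terracini--Verzini \cite{C-T-V 3} (Lemma~4.4 there), so there is no in-paper argument to compare against.

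Your proof is correct. In the barrier approach the computation $\Delta\phi=\bigl(M+\tfrac{(n-1)(3-n)}{4|x|^{2}}\bigr)\phi$ is right, and for $n\ge 3$ this indeed gives $\Delta\phi\le M\phi$; the maximum principle on the punctured ball is justified by applying it on $B_{2R}\setminus B_\varepsilon$ (where $\phi>A\ge u$ on $\partial B_\varepsilon$ once $\varepsilon$ is small) and letting $\varepsilon\to 0$. The final step, using subharmonicity of $u$ to pass from $\sup_{\partial B_R}u$ to $\sup_{B_R}u$, is fine and in particular covers the origin. The reduction of $n=2$ to $n=3$ by trivial extension in an extra variable is legitimate since the projection of $B_{2R}^{3}$ onto the first two coordinates lies in $B_{2R}^{2}$. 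The alternative spherical-mean/Bessel comparison is also valid; the only delicate point is the endpoint $\rho=0$, and your analysis there (using $w'(0)=\psi'(0)=0$ and the limiting form $n g''(0)-Mg(0)$ of the radial operator) handles it correctly.
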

The next one is an interior version of the uniform H\"{o}lder
estimate in \cite{NTTV}.
\begin{thm}\label{lem uniform Holder}
Let $(u_\kappa, v_\kappa)$ be a sequence of solutions of
\eqref{equation scaled} in $B_{2}(0)$. Assume that as $\kappa\to+\infty$,
$u_\kappa$ and $v_\kappa$ are uniformly bounded, then for any
$\alpha\in(0,1)$, $u_\kappa$ and $v_\kappa$ are uniformly bounded in
$C^\alpha(B_1(0))$.
\end{thm}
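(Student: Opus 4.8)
The plan is to argue by contradiction and blow-up, in the spirit of Noris--Tavares--Terracini--Verzini. Suppose the statement fails: there exist $\alpha\in(0,1)$ and a sequence $\kappa_k\to+\infty$ of solutions $(u_k,v_k):=(u_{\kappa_k},v_{\kappa_k})$ of \eqref{equation scaled} on $B_2(0)$ with $\|u_k\|_{L^\infty(B_2)}+\|v_k\|_{L^\infty(B_2)}\le M$ but $[u_k]_{C^\alpha(B_1)}+[v_k]_{C^\alpha(B_1)}\to+\infty$; after relabelling, assume the H\"older seminorm of $u_k$ blows up. First I would record the estimates that come for free from the $L^\infty$ bound: testing \eqref{equation scaled} with $\zeta^2u_k$ for interior cut-offs $\zeta$ yields $\int_{B_\rho}|\nabla u_k|^2+|\nabla v_k|^2+\kappa_k u_k^2v_k^2\le C(\rho)M^2$ for every $\rho<2$. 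Together with the monotonicity and doubling results above (Propositions \ref{monotonocity}, \ref{monotonocity 2}, \ref{doubling property}, in their $\kappa$-forms), this gives uniform local energy bounds and uniform control on the growth of $H_{\kappa_k}(\cdot\,;x)$ on compact subsets of $B_2$; and the convergence result recalled in the Introduction gives the uniform segregation $u_kv_k\to0$ on compact subsets of $B_2$ (one may also invoke Lemma \ref{estimate exponential decay}).

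Next comes the rescaling. By a doubling-over-scales selection one chooses base points $p_k\to p_\infty\in\overline{B_1}$ and radii $r_k\to0$ so that the normalized oscillation of $u_k$ over $B_{r_k}(p_k)$ is, up to a fixed factor, maximal among all smaller balls contained in a fixed compact neighbourhood; setting $L_k$ equal to this oscillation, define $\bar u_k(y):=L_k^{-1}u_k(p_k+r_ky)$ and $\bar v_k(y):=L_k^{-1}v_k(p_k+r_ky)$, which solve the scaled system with rescaled coupling parameter $\mu_k:=\kappa_kL_k^2r_k^2$ on $B_{R_k}(0)$, $R_k\to+\infty$. By construction $\mathrm{osc}_{B_1(0)}\bar u_k\sim1$ and $[\bar u_k]_{C^\alpha(B_R)}+[\bar v_k]_{C^\alpha(B_R)}\le C$ for every fixed $R$ and all large $k$. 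After subtracting suitable constants (whose relevance is controlled using the uniform segregation) and applying Arzel\`a--Ascoli together with the uniform energy bounds, $(\bar u_k,\bar v_k)$ converges in $C^{0,\alpha'}_{loc}\cap H^1_{loc}$, for $\alpha'<\alpha$, to a limit $(\bar u,\bar v)$ defined on all of $\mathbb{R}^n$ with $\mathrm{osc}_{B_1(0)}\bar u>0$ and $|\bar u|+|\bar v|\le C(1+|x|)^\alpha$.

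Now I would distinguish cases according to $\mu_k$. If $\mu_k$ stays bounded, pass to a subsequence with $\mu_k\to c\in[0,+\infty)$; the limit solves $\Delta\bar u=c\,\bar u\,\bar v^2$, $\Delta\bar v=c\,\bar v\,\bar u^2$ on $\mathbb{R}^n$ (both functions are harmonic when $c=0$). If $\mu_k\to+\infty$, the uniform bound on $\int\mu_k\bar u_k^2\bar v_k^2$ forces $\bar u_k\bar v_k\to0$, so $\bar u\bar v\equiv0$ and, by the convergence theorem recalled in the Introduction (Dancer--Wang--Zhang, Tavares--Terracini), $\bar u-\bar v$ is harmonic, i.e. $(\bar u,\bar v)=(W^+,W^-)$ with $W$ harmonic on $\mathbb{R}^n$. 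In every case the limit is either a harmonic function with $|W|\le C(1+|x|)^\alpha$, which by the Liouville theorem for harmonic functions of sublinear growth must be constant, or (after the scaling $(\bar u,\bar v)\mapsto(\sqrt c\,\bar u,\sqrt c\,\bar v)$ when $c>0$) a non-negative entire solution of \eqref{equation} with $\bar u+\bar v\le C(1+|x|)^\alpha$, $\alpha<1$, which by the Liouville theorem of Noris--Tavares--Terracini--Verzini recalled in the Introduction must vanish identically. In either case $\bar u$ is constant, contradicting $\mathrm{osc}_{B_1(0)}\bar u>0$, and the theorem follows.

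The main obstacle is the rescaling step: one must select $p_k$ and $r_k$ so that the rescaled functions enjoy uniform H\"older bounds on \emph{expanding} balls, and one must cope with the degenerate case $L_k\to0$, in which the subtracted base values $u_k(p_k),v_k(p_k)$ are not small relative to $L_k$; controlling the resulting error terms and ensuring that the blow-up limit still solves the limiting equation (or is harmonic) is exactly where the uniform segregation $u_kv_k\to0$ and the energy/doubling estimates above are needed. Once the limit has been identified, the rest is soft, relying only on the two Liouville theorems, one of which is already available from the Introduction.
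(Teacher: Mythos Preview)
Your overall strategy---blow up at points where the $C^\alpha$ seminorm concentrates, pass to an entire limit, and kill it with a Liouville theorem---is exactly the NTTV strategy that the paper adapts, so in spirit the approach is correct. There is however a genuine gap at the point you yourself flag as the ``main obstacle'': the proposal to \emph{subtract suitable constants} from $(\bar u_k,\bar v_k)$ in order to extract a convergent subsequence does not interact well with the equation. The system $\Delta u=\kappa u v^2$, $\Delta v=\kappa v u^2$ is not invariant under $(u,v)\mapsto(u-c,v-d)$, so if $\bar u_k(0)=c_k\to+\infty$ (which can certainly occur when $L_k\to 0$), the shifted pair $(\bar u_k-c_k,\bar v_k-d_k)$ satisfies $\Delta(\bar u_k-c_k)=\mu_k(\bar u_k-c_k+c_k)(\bar v_k-d_k+d_k)^2$, and there is no reason for the limit to solve $\Delta\bar u=c\,\bar u\,\bar v^2$ as you claim, nor for $\bar u\bar v\equiv 0$ to follow from $\bar u_k\bar v_k\to0$. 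Likewise the NTTV Liouville theorem applies to nonnegative solutions, which the shifted functions need not be. The sentence ``controlling the resulting error terms\dots is exactly where the uniform segregation $u_kv_k\to0$ and the energy/doubling estimates above are needed'' does not explain how these tools close the gap, and in fact they do not do so directly.

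The paper (following NTTV) handles this differently: rather than subtracting constants, it first proves that the rescaled base values $\widetilde u_\kappa(0)+\widetilde v_\kappa(0)$ are \emph{already} uniformly bounded whenever the rescaled coupling $M_\kappa$ is bounded below (the appendix Lemma corresponding to \cite[Lemma 3.4]{NTTV}). The argument is: if $\widetilde u_\kappa(0)=A_\kappa\to+\infty$, then by the uniform H\"older bound $\bar u_\kappa\ge A_\kappa/2$ on a fixed ball, whence $\Delta\bar v_\kappa\ge CM_\kappa A_\kappa^2\bar v_\kappa$ and Lemma~\ref{estimate exponential decay} forces $\bar v_\kappa$ (and then $\Delta\bar u_\kappa$) to be exponentially small; elliptic estimates then give a uniform \emph{Lipschitz} bound on $\bar u_\kappa$ near $0$, contradicting the normalization $|z_\kappa|^{-\alpha}|\widetilde u_\kappa(0)-\widetilde u_\kappa(z_\kappa)|\sim 1$ with $\alpha<1$. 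A second lemma then forces $M_\kappa\to+\infty$, so one lands in the segregated regime with bounded base values, and only then does one pass to the limit and invoke Liouville. There is also a technical difference in localization: instead of a doubling-over-scales selection, the paper multiplies by a fixed cutoff $\eta$ vanishing on $\partial B_2$ and takes the \emph{global} maximum of the H\"older quotient of $(\eta u_\kappa,\eta v_\kappa)$ over $\overline{B_2}$; this yields the uniform $C^\alpha$ bound on the whole rescaled domain in one stroke and simultaneously handles the possibility that the blow-up point drifts toward $\partial B_2$.
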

We will give a proof in the appendix, following the blow up method
in \cite{NTTV}.

Finally we give a result about the limit of solutions of
\eqref{equation scaled}. This is the consequence of a combination of
the main results in \cite{DWZ2011} and \cite{TT2011}.
\begin{thm}\label{lem 2.3}
 For every $h>0$, there exists a $K>0$, if
$(u_\kappa,v_\kappa)$ is a solution of \eqref{equation scaled} in
$B_2(0)$, with the parameter $\kappa\geq K$, and satisfies
\begin{enumerate}
\item $u_\kappa(0)=v_\kappa(0);$
\item $\int_{\partial B_1(0)}u_\kappa^2+v_\kappa^2=\int_{\partial
B_1(0)}x_1^2;$
\item \[N(2;0,u_\kappa,v_\kappa)=\frac{2\int_{B_2(0)}|\nabla u_\kappa|^2+|\nabla v_\kappa|^2+\kappa u_\kappa^2v_\kappa^2}
{\int_{\partial B_2(0)}u_\kappa^2+v_\kappa^2}\leq 1,\]
\end{enumerate}
then there exists a unit vector $e$ such that
\[\sup_{B_1(0)}\left(|u_\kappa-(e\cdot x)^+|+|v_\kappa-(e\cdot x)^-|\right)\leq h.\]
\end{thm}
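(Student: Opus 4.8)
The plan is to argue by contradiction using a compactness/blow-up scheme, as in the proofs of the main convergence results of \cite{DWZ2011} and \cite{TT2011}. Suppose the statement fails. Then there exist $h_0>0$ and a sequence $\kappa_j\to+\infty$ together with solutions $(u_j,v_j):=(u_{\kappa_j},v_{\kappa_j})$ of \eqref{equation scaled} in $B_2(0)$ satisfying conditions (1)--(3), but for which
\[
\inf_{e\in\mathbb{S}^{n-1}}\ \sup_{B_1(0)}\left(|u_j-(e\cdot x)^+|+|v_j-(e\cdot x)^-|\right)>h_0\qquad\text{for all }j.
\]
First I would establish uniform bounds. By normalization (2), $\int_{\partial B_1(0)}u_j^2+v_j^2$ is a fixed constant; combined with the Almgren bound $N_{\kappa_j}(2;0)\le 1$ from (3) and the Almgren monotonicity formula (Proposition \ref{monotonocity}), which gives $N_{\kappa_j}(r;0)\le 1$ for all $r\le 2$, the doubling property of Proposition \ref{doubling property} (applied with $d=1$) yields a uniform bound on $H_j(r)=r^{1-n}\int_{\partial B_r(0)}u_j^2+v_j^2$ for $r\in(1,2)$, hence on $\int_{B_{2}(0)}u_j^2+v_j^2$. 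Together with the energy monotonicity (Proposition \ref{monotonocity 1}) this also controls the Dirichlet energy plus the interaction term on slightly smaller balls. Standard elliptic estimates then give a uniform $L^\infty$ bound for $u_j,v_j$ on, say, $B_{3/2}(0)$, and the uniform H\"older estimate Theorem \ref{lem uniform Holder} upgrades this to a uniform $C^\alpha(B_{3/2}(0))$ bound for every $\alpha\in(0,1)$.

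Next I would pass to the limit. By Arzel\`a--Ascoli, along a subsequence $u_j\to u_\infty$ and $v_j\to v_\infty$ uniformly on $B_1(0)$ (indeed on $\overline{B_{5/4}(0)}$), with $u_\infty,v_\infty\ge 0$ and $u_\infty v_\infty\equiv 0$ (otherwise the term $\kappa_j\int u_j^2v_j^2$ blows up, contradicting the energy bound). By the results of \cite{DWZ2011,TT2011} the limit has the form $u_\infty=w^+$, $v_\infty=w^-$ for some harmonic function $w$ on $B_1(0)$, and the normalization and Almgren conditions pass to the limit: $w(0)=u_\infty(0)-v_\infty(0)=0$ (from (1)), $\int_{\partial B_1(0)}w^2=\int_{\partial B_1(0)}x_1^2$ (from (2)), and the Almgren frequency of $w$ at $0$ is $\le 1$ (from (3), using lower semicontinuity of the numerator and the fact that the interaction term is nonnegative). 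Since $w$ is harmonic, $w(0)=0$, and its frequency at the origin is at most $1$, $w$ must be a linear function: $w(x)=a\cdot x$ for some $a\in\mathbb{R}^n$; the normalization then forces $|a|=1$, so $w(x)=e\cdot x$ with $e\in\mathbb{S}^{n-1}$. Hence $\sup_{B_1(0)}\left(|u_j-(e\cdot x)^+|+|v_j-(e\cdot x)^-|\right)\to 0$, contradicting the choice of $h_0$.

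The main obstacle is the passage to the limit in the structural statement ``$u_\infty=w^+$, $v_\infty=w^-$ with $w$ harmonic'' and, relatedly, extracting that the limiting frequency is $\le 1$: one must ensure the interaction penalization really does force $u_\infty v_\infty\equiv 0$ on the whole of $B_1(0)$ and that across the free boundary $\{w=0\}$ the two positive parts glue into a genuine harmonic function (no loss of mass, correct matching of normal derivatives). This is precisely the content of \cite{DWZ2011} and \cite{TT2011}, which I would invoke directly rather than reprove; the only care needed is to check that their hypotheses (uniform $L^\infty$ bounds, which we have verified above) are met on a neighborhood of $\overline{B_1(0)}$. A secondary technical point is that the Almgren quotient is evaluated at the fixed radius $r=2$ in hypothesis (3) but the conclusion is on $B_1(0)$; monotonicity of $N_{\kappa_j}(\cdot;0)$ lets us use the bound at all intermediate radii, which is what makes the doubling estimate and the frequency bound for $w$ available.
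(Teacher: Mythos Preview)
Your proposal is correct and follows essentially the same route as the paper: contradiction, uniform $L^\infty$ and $C^\alpha$ bounds via doubling plus subharmonicity, compactness, and identification of the harmonic limit $w$ as linear by showing its Almgren frequency is identically $1$ on $(0,1)$. The only cosmetic difference is that the paper passes the frequency to the limit as an equality using the strong $H^1_{\mathrm{loc}}$ convergence from \cite{DWZ2011,TT2011}, whereas you invoke lower semicontinuity of the Dirichlet energy; either way one obtains $N_\infty(1)\le 1$, which combined with $N_\infty(0^+)\ge 1$ (the vanishing order of $w$ at $0$) forces $N_\infty\equiv 1$ and hence linearity.
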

\begin{proof}
Assume that there exists $h>0$ and a sub-sequence of $(u_\kappa,v_\kappa)$ satisfying
the assumptions but for every unit vector $e$,
\begin{equation}\label{2.10}
\sup_{B_1(0)}\left(|u_\kappa-(e\cdot x)^+|+|v_\kappa-(e\cdot x)^-|\right)\geq h.
\end{equation}

 By Proposition \ref{doubling property}, there is a universal constant $C$
such that
\[\int_{\partial B_2(0)}u_\kappa^2+v_\kappa^2\leq C.\]
 Because $u_\kappa$ and
$v_\kappa$ are subharmonic functions, they are uniformly bounded in
$B_{3/2}(0)$. By Theorem \ref{lem uniform Holder}, they are also
uniformly bounded in $C^{\alpha}(B_{4/3}(0))$ for every
$\alpha\in(0,1)$. Choosing a subsequence of $(u_\kappa,v_\kappa)$
such that they converge to $(u_\infty,v_\infty)$ uniformly in
$B_1(0)$.
\par
By the main result of \cite{DWZ2011} and \cite{TT2011},
$u_\infty-v_\infty$ is a harmonic function. They satisfy
\begin{enumerate}
\item $u_\infty v_\infty\equiv 0$;
\item $u_\infty(0)=v_\infty(0);$
\item $\int_{\partial B_1(0)}u_\infty^2+v_\infty^2=\int_{\partial
B_1(0)}x_1^2;$
\item \[N_{\infty}(1;0,u_\infty,v_\infty)=\frac{\int_{B_1(0)}|\nabla u_\infty|^2+|\nabla v_\infty|^2}
{\int_{\partial B_1(0)}u_\infty^2+v_\infty^2}=\lim\limits_{\kappa
\to+\infty} N(1;0,u_\kappa,v_\kappa)\leq 1.\]
\end{enumerate}
Note that $N_{\infty}(r;0,u_\infty,v_\infty)$ is exactly the Almgren monotonicity quantity for the harmonic function $u_\infty-v_\infty$. We also have $u_{\infty}(0)=v_{\infty}(0)=0$. By the Almgren
monotonicity formula for harmonic functions, for any $r\in(0,1)$
\begin{equation*}
N_{\infty}(r;0,u_\infty,v_\infty)\geq\lim\limits_{r\to
0}N_{\infty}(r;0,u_\infty,v_\infty)=\text{ord}(u_\infty-v_\infty,0)\geq
1.
\end{equation*}
In the above, $\text{ord}(u_\infty-v_\infty,0)$ is the vanishing order of the
harmonic function $u_\infty-v_\infty$ at $0$. By the Almgren
monotonicity formula for harmonic functions,
$N_{\infty}(r;0,u_\infty,v_\infty)=1$ for every $r\in(0,1)$. This then
implies that $u_\infty-v_\infty=e\cdot x$ for some unit vector $e$. (This
characterization is well known. For a proof and some generalizations
see \cite[Proposition 3.9]{NTTV}.) This is a contradiction, so the
assumption \eqref{2.10} does not hold.
\end{proof}

\section{The blowing down sequence}
\numberwithin{equation}{section}
 \setcounter{equation}{0}
In this section, $(u,v)$ denotes a solution to \eqref{equation}
satisfying the linear growth condition \eqref{linear growth}.
\par
For every $R\geq 1$, denote
$$L(R)^2:=R^{1-n}\int_{\partial B_R(0)}u^2+v^2,$$
and define the blowing down sequence
$$u^R(x):=\frac{1}{L(R)}u(Rx),\ \ \ \ v^R(x):=\frac{1}{L(R)}v(Rx)
.$$
\begin{rmk}\label{growth bound}
By \eqref{linear growth}, there exists a constant $C>0$ such that
$$L(R)\leq CR.$$
On the other hand, by the Liouville type result
(c.f. \cite[Proposition 2.7]{NTTV}), $\forall \alpha\in(0,1)$, $\exists C_{\alpha}>0$
such that
$$L(R)\geq C_{\alpha}R^{\alpha}.$$
\end{rmk}
By the definition, we have the normalized condition
\begin{equation}
\int_{\partial B_1(0)}(u^R)^2+(v^R)^2=1.
\end{equation}
$u^R$ and $v^R$ satisfy \eqref{equation scaled} with
$\kappa(R)=L(R)^2R^2$. Note that as $R\to+\infty$, $\kappa(R)\to+\infty$.
\begin{lem}
There exists a sequence of $R_j\rightarrow+\infty$, such that
$L(\frac{R_j}{2})\geq\frac{1}{3}L(R_j)$.
\end{lem}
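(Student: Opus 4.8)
The plan is to argue by contradiction using the growth bounds in Remark \ref{growth bound}. Suppose no such sequence exists, i.e. there is an $R_0 > 1$ such that for all $R \geq R_0$ we have $L(R/2) < \frac13 L(R)$. Iterating this inequality along the dyadic sequence $R = 2^k R_0$ yields $L(2^{-k} \cdot 2^m R_0) < 3^{-1} L(2^{-k+1} \cdot 2^m R_0)$, and chaining these together gives, for $R = 2^m R_0$,
\[
L(R_0) = L(2^{-m} R) < 3^{-m} L(R),
\]
so that $L(2^m R_0) > 3^m L(R_0)$ for every $m \geq 0$.

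First I would translate this into a statement about the growth rate in $R$. Writing $R = 2^m R_0$, we have $3^m = (2^m)^{\log_2 3} = (R/R_0)^{\log_2 3}$, hence
\[
L(R) > L(R_0) (R/R_0)^{\log_2 3} = c\, R^{\log_2 3},
\]
for a constant $c > 0$ depending on $R_0$ and the solution. Since $\log_2 3 = \frac{\ln 3}{\ln 2} \approx 1.585 > 1$, this contradicts the upper bound $L(R) \leq CR$ from Remark \ref{growth bound} once $R$ is large enough. Therefore the assumption fails, and for arbitrarily large $R$ we must have $L(R/2) \geq \frac13 L(R)$; picking such $R$ tending to infinity gives the desired sequence $R_j$.

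The only slightly delicate point is the bookkeeping in the iteration: one must be careful that the dyadic chain only uses radii $\geq R_0$, which is automatic since we iterate upward from $R_0$, and that $H(r;0) = L(r)^2$ does not vanish — this follows from $u, v > 0$ and is implicit in the definitions. The monotonicity results of Section 2 are not actually needed here; the argument is purely a consequence of the two-sided polynomial growth control on $L$. (Indeed, the same reasoning shows more generally that one cannot have $L(R/2) < \theta L(R)$ for all large $R$ with any fixed $\theta < \tfrac12$, since that would force superlinear growth $L(R) \gtrsim R^{\log_2(1/\theta)}$ with $\log_2(1/\theta) > 1$.)
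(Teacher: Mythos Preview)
Your proof is correct and is essentially the same as the paper's: both argue by contradiction, iterate the inequality along a dyadic sequence to obtain $L(2^m R_0)\geq 3^m L(R_0)$, and then contradict the linear upper bound $L(R)\leq CR$. Your additional remark computing the exponent $\log_2 3>1$ and the parenthetical about general $\theta<\tfrac12$ are correct refinements but not needed for the lemma as stated.
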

\begin{proof}
Assume by the contrary, $\exists R_0>0$ such that, for any $R\geq
R_0$,
\begin{equation*}
L(R)\leq \frac{1}{3}L(2R).
\end{equation*}
An iteration implies, $\forall k>0$,
\begin{equation*}
L(2^kR_0)\geq 3^kL(R_0).
\end{equation*}
On the other hand, by \eqref{linear growth}, we also have
$$L(2^kR_0)\leq C2^k.$$
For $k$ large, this is a contradiction.
\end{proof}
As a consequence, if we choose $R=R_j$ in the definition of $u^R$
and $v^R$ (simply denoted as $u_j,v_j$, and $\kappa_j=\kappa(R_j)$),
we have
\begin{equation}\label{nondegeneracy}
\int_{\partial B_{\frac{1}{2}}(0)}u_j^2+v_j^2\geq \frac{1}{3}.
\end{equation}
Because $u_j$ and $v_j$ are subharmonic, we can get a uniform upper
bound of $u_j$ and $v_j$ on any compact set of $B_1(0)$. Then by
Theorem \ref{lem uniform Holder}, $u_j$ and $v_j$ are uniformly
bounded in $C^{\alpha}(B_{1-\varepsilon}(0))$ for every
$\alpha,\varepsilon\in(0,1)$. Hence we can extract a subsequence of
$j$ (still denoted by $j$), such that $(u_j,v_j)$ converges to
$(u_{\infty},v_{\infty})$ uniformly on any compact set of $B_1(0)$.
By the main result of \cite{DWZ2011} and \cite{TT2011},
$(u_{\infty},v_{\infty})$ satisfies
\begin{equation*}
\Delta(u_{\infty}-v_{\infty})=0.
\end{equation*}
\eqref{nondegeneracy} can be passed to the limit, which implies that
$(u_{\infty},v_{\infty})$ is nonzero. In fact, because
$$u_{\infty}(0)+v_{\infty}(0)=\lim\limits_{j\rightarrow+\infty}u_j(0)+v_j(0)=
\lim\limits_{j\rightarrow+\infty}\frac{u(0)+v(0)}{L_j}=0,$$ both
$u_{\infty}$ and $v_{\infty}$ are nonzero. \footnote { Otherwise,
for example, if $v_{\infty}\equiv 0$, then $u_{\infty}$ is
nonnegative and harmonic. Since $u_\infty(0)=0$, by the strong
maximum principle, $u_\infty\equiv 0$. This means $(u_\infty,v_\infty)=0$, which is a contradiction.}
\begin{prop}\label{prop 4.3}
$u_{\infty}-v_{\infty}$ is linear. That is, there is a constant
$c(n)>0$ depending on the dimension $n$ only, such that, under
suitable coordinates,
$$u_{\infty}=c(n)x_1^+,\ \ \ \ v_{\infty}=c(n)x_1^-.$$
\end{prop}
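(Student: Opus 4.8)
The engine of the proof is the Almgren frequency $N(r;0,u,v)$: the plan is to show that the linear growth assumption forces $N(r;0,u,v)\le 1$ for every $r>0$, to pass this bound to the blow-down limit, and to combine it with the trivial lower bound coming from $w:=u_\infty-v_\infty$ vanishing at the origin; this will show the limiting frequency is identically $1$, which forces $w$ to be linear. For the upper bound, fix $r_0>0$ and set $d:=N(r_0;0,u,v)$. By Proposition \ref{monotonocity 2}, $r\mapsto r^{1-n-2d}\int_{\partial B_r(0)}u^2+v^2=r^{-2d}H(r;0)$ is nondecreasing for $r\ge r_0$, so $H(r;0)\ge c\,r^{2d}$ for $r\ge r_0$; on the other hand \eqref{linear growth} gives $H(r;0)=r^{1-n}\int_{\partial B_r(0)}u^2+v^2\le C(1+r)^2$. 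Letting $r\to+\infty$ forces $2d\le 2$, and since $r_0$ is arbitrary, $N(r;0,u,v)\le 1$ for all $r>0$.

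Next I would pass this to the blow-down limit. A change of variables yields the scaling identity $N_{\kappa_j}(r;0,u_j,v_j)=N(R_jr;0,u,v)$, so $N_{\kappa_j}(r;0,u_j,v_j)\le 1$ for all $j$ and $r>0$. Besides $\Delta(u_\infty-v_\infty)=0$, the results of \cite{DWZ2011,TT2011} give the segregation $u_\infty v_\infty\equiv 0$, together with $H^1_{\mathrm{loc}}$ convergence $(u_j,v_j)\to(u_\infty,v_\infty)$ and $\kappa_j u_j^2v_j^2\to0$ in $L^1_{\mathrm{loc}}$. Since $u_\infty,v_\infty\ge 0$ and $u_\infty v_\infty\equiv0$ we have $u_\infty=w^+$, $v_\infty=w^-$, hence $|\nabla u_\infty|^2+|\nabla v_\infty|^2=|\nabla w|^2$ and $u_\infty^2+v_\infty^2=w^2$ a.e. Passing the quotient to the limit — legitimate because $\int_{\partial B_r(0)}u_j^2+v_j^2$ stays bounded away from $0$ for $r$ in a compact subset of $(0,1)$, by Propositions \ref{monotonocity 2} and \ref{doubling property} — gives, for a.e.\ $r\in(0,1)$,
\[N_\infty(r):=\frac{r\int_{B_r(0)}|\nabla w|^2}{\int_{\partial B_r(0)}w^2}\le 1,\]
which is precisely the classical Almgren frequency of the harmonic function $w$.

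It remains to use the lower bound. Because $w(0)=u_\infty(0)-v_\infty(0)=0$ and $w\not\equiv0$, the Almgren monotonicity formula for harmonic functions gives that $N_\infty$ is nondecreasing with $\lim_{r\to0}N_\infty(r)=\text{ord}(w,0)\ge1$; with the previous step this forces $N_\infty(r)\equiv1$ on $(0,1)$. By the standard characterization of harmonic functions of constant frequency (\cite[Proposition 3.9]{NTTV}), $w$ is a homogeneous harmonic polynomial of degree $1$, i.e.\ $w(x)=a\cdot x$ with $a\ne0$; after a rotation $w=|a|\,x_1$, so $u_\infty=|a|\,x_1^+$ and $v_\infty=|a|\,x_1^-$. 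To identify $|a|$, pass $H(r;0,u_j,v_j)$ to the limit for $r\in(0,1)$: since $\widetilde N_j(r):=\tfrac{r\int_{B_r(0)}(|\nabla u_j|^2+|\nabla v_j|^2+2\kappa_ju_j^2v_j^2)}{\int_{\partial B_r(0)}u_j^2+v_j^2}\to1$ and $H(1;0,u_j,v_j)=\int_{\partial B_1(0)}u_j^2+v_j^2=1$, integrating $\tfrac{d}{dr}\log H=\tfrac{2}{r}\widetilde N_j$ gives $\int_{\partial B_r(0)}w^2=r^{n+1}$ on $(0,1)$, hence $|a|^2\int_{\partial B_1(0)}x_1^2=1$ and $c(n):=\big(\int_{\partial B_1(0)}x_1^2\big)^{-1/2}$ depends on $n$ only.

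I expect the passage to the limit in the second step to be the main obstacle: it leans on the strong ($H^1_{\mathrm{loc}}$) convergence of the blow-down sequence and on the vanishing of the interaction term $\kappa_ju_j^2v_j^2$ — that is, on the full strength of the phase-segregation limit theory of \cite{DWZ2011,TT2011} — and also on the uniform lower bounds for $H(r;0,u_j,v_j)$ away from $r=0$ that keep the limiting Almgren quotient well defined. Once the frequency survives the limit, the upper and lower bounds and the characterization of constant-frequency harmonic functions are soft.
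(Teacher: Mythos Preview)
Your proof is correct and follows essentially the same route as the paper: pin the limiting Almgren frequency at $N_\infty\equiv 1$ by combining the lower bound from $w(0)=0$ with the upper bound forced by linear growth via Proposition~\ref{monotonocity 2}, then invoke the characterization of harmonic functions with constant frequency. The only difference is organizational --- you prove $N(r;0,u,v)\le 1$ directly first (what the paper records afterward as Corollary~\ref{coro 4.3}) and pass it forward to the limit, whereas the paper argues by contradiction from $N_\infty(r_0)>1$; you also supply the identification of $c(n)$ via the ODE for $\log H$ and dominated convergence (using $\widetilde N_j\le 2N_j\le 2$), which the paper leaves implicit.
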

\begin{proof}
By rescaling the monotonicity formula in Proposition \ref{monotonocity}, we get a similar one for
$(u_j,v_j)$. That is, for $r>0$ and $B_r(x)\subset B_1(0)$,
\begin{equation*}
\frac{r\int_{B_r(x)}|\nabla u_j|^2+|\nabla v_j|^2+\kappa_ju_j^2v_j^2}
{\int_{\partial B_r(x)}u_j^2+v_j^2}
\end{equation*}
is nondecreasing in $r$. We can prove that (cf.
\cite[Theorem 1.4]{NTTV} and \cite[Corollary 2.4]{C-L 2})
\[u_j\rightarrow u_\infty,\ \ v_j\rightarrow v_\infty~~\text{strongly in}~~H^1_{loc}(B_1(0)),\]
$$\kappa_ju_j^2v_j^2\rightarrow 0,~~\mbox{in}~~L^1_{loc}(B_1(0)).$$
Thus for every $B_r(x)\subset\subset B_1(0)$, we have
\begin{eqnarray*}
N_{\infty}(r;x,u_\infty,v_\infty):=\frac{r\int_{B_r(x)}|\nabla
u_{\infty}|^2+|\nabla v_{\infty}|^2} {\int_{\partial
B_r(x)}u_\infty^2+v_\infty^2}=\lim\limits_{j\rightarrow+\infty}\frac{r\int_{B_r(x)}|\nabla
u_j|^2+|\nabla v_j|^2+\kappa_ju_j^2v_j^2} {\int_{\partial
B_r(x)}u_j^2+v_j^2}
\end{eqnarray*}
is nondecreasing in $r$, too. Of course, this fact is nothing else
but the Almgren monotonicity formula for harmonic functions.
\par
Because $u_{\infty}(0)=v_{\infty}(0)=0$, for any $r>0$
\begin{equation*}
N_{\infty}(r;0,u_\infty,v_\infty)\geq\lim\limits_{r\to
0}N_{\infty}(r;0,u_\infty,v_\infty)=\text{ord}(u_\infty-v_\infty,0)\geq
1.
\end{equation*}
Here $\text{ord}(u_\infty-v_\infty,0)$ still denotes the vanishing order of the
harmonic function $u_\infty-v_\infty$ at $0$. We claim that
$N_{\infty}(r;0,u_\infty,v_\infty)=1$ for every $r\in(0,1)$. If
this is true, then by the characterization of linear functions using
the Almgren monotonicity formula (see for example \cite[Proposition 3.9]{NTTV}), we can finish the proof of this proposition.
\par
Assume by the contrary, $\exists r_0\in(0,1)$, such that $N_{\infty}(r_0)=1+2\delta$ with $\delta>0$.
Then for $j$ large,
\begin{equation*}
\frac{r_0\int_{B_{r_0}(0)}|\nabla u_j|^2+|\nabla v_j|^2+\kappa_ju_j^2v_j^2}
{\int_{\partial B_{r_0}(0)}u_j^2+v_j^2}\geq 1+\delta.
\end{equation*}
Rescaling back and using Proposition \ref{monotonocity}, we know
that for $r$ large enough,
\begin{equation*}
\frac{r\int_{B_{r}(0)}|\nabla u|^2+|\nabla v|^2+u^2v^2}
{\int_{\partial B_{r}(0)}u^2+v^2}\geq 1+\delta.
\end{equation*}
By Proposition \ref{monotonocity 2}, for $r>0$ large enough
$$r^{1-n-2(1+\delta)}\int_{\partial B_r(0)}u^2+v^2$$
is nondecreasing in $r$. This contradicts the linear growth
condition \eqref{linear growth} and proves the claim.
\end{proof}

The above blowing down procedure can be performed at any base point
$x\in\mathbb{R}^n$. Thus we get
\begin{coro}\label{coro 4.3}
Let $(u,v)$ be a solution of \eqref{equation} satisfying the linear
growth condition \eqref{linear growth}. Then for every $x\in
\mathbb{R}^n$ and $r>0$, $N(r;x)\leq 1$.
\end{coro}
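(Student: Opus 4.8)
The plan is to read off $N(r;x)\le 1$ directly from the Almgren monotonicity formula (Proposition~\ref{monotonocity}), its consequence Proposition~\ref{monotonocity 2}, and the linear growth hypothesis~\eqref{linear growth}; this is the same contradiction scheme that already appears at the end of the proof of Proposition~\ref{prop 4.3}, now run with an arbitrary base point $x$ instead of the origin. (A more conceptual variant, closer to the sentence preceding the statement, is to repeat the blow-down of Section~3 centred at $x$.)

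Fix $x\in\mathbb{R}^n$. Since $u,v>0$ the denominator $\int_{\partial B_r(x)}u^2+v^2$ is strictly positive for every $r>0$, so $N(r;x)$ is well defined and, by Proposition~\ref{monotonocity}, nondecreasing in $r$. I would argue by contradiction: suppose $N(r_1;x)>1$ for some $r_1>0$ and pick a number $d$ with $1<d<N(r_1;x)$. Monotonicity gives $N(r;x)\ge N(r_1;x)>d$ for all $r\ge r_1$, so Proposition~\ref{monotonocity 2} (with base point $x$ and $r_0=r_1$) shows that $r^{1-n-2d}\int_{\partial B_r(x)}u^2+v^2$ is nondecreasing for $r>r_1$; hence there is $c_0>0$ with
\[\int_{\partial B_r(x)}u^2+v^2\ \ge\ c_0\,r^{\,n-1+2d}\qquad\text{for all }r>r_1.\]
On the other hand $|y|\le|x|+r$ on $\partial B_r(x)$, so~\eqref{linear growth} gives $u(y)^2+v(y)^2\le C^2(1+|x|+r)^2$ there, whence
\[\int_{\partial B_r(x)}u^2+v^2\ \le\ C^2\,\omega_{n-1}\,(1+|x|+r)^2\,r^{\,n-1},\qquad \omega_{n-1}:=|\mathbb{S}^{n-1}|.\]
Dividing both estimates by $r^{n-1}$ yields $c_0\,r^{2d}\le C^2\omega_{n-1}(1+|x|+r)^2$ for all $r>r_1$, which fails as $r\to+\infty$ because $2d>2$. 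This contradiction gives $N(r_1;x)\le 1$ for every $r_1>0$, as claimed.

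The alternative route is to redo the blow-down construction of Section~3 centred at $x$, replacing $L(R)$ by $L_x(R)^2:=R^{1-n}\int_{\partial B_R(x)}u^2+v^2$ and $u^R$ by $u^R_x(y):=L_x(R)^{-1}u(x+Ry)$: the same iteration lemma (via~\eqref{linear growth}) produces radii $R_j\to+\infty$ along which the nondegeneracy~\eqref{nondegeneracy} survives, the limit $(u_\infty,v_\infty)$ has $u_\infty-v_\infty$ harmonic, and the proof of Proposition~\ref{prop 4.3} applies verbatim to show $u_\infty-v_\infty$ is linear, hence its Almgren frequency is $\equiv1$. Since the Almgren quotient is scaling invariant, $N(R_jr;x)\to N_\infty(r;0)=1$, and the monotonicity of $N(\cdot;x)$ forces $\sup_{r>0}N(r;x)=\lim_{r\to+\infty}N(r;x)=1$.

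I do not expect a genuine obstacle here; both arguments are short. The only point that needs a touch of care is choosing the auxiliary exponent $d$ strictly between $1$ and $N(r_1;x)$ in the first approach, so that the forced lower growth $r^{2d}$ strictly beats the quadratic upper bound coming from~\eqref{linear growth} — with $d=1$ one would only get the borderline comparison $c_0r^2\le C^2\omega_{n-1}(1+|x|+r)^2$, which is not contradictory, so the strict inequality $N(r_1;x)>1$ is exactly what is being exploited.
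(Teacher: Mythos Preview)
Your proposal is correct and matches the paper's argument. The paper's one-line proof is exactly your second route (redo the blow-down of Section~3 with base point $x$), while your first route simply extracts the final contradiction step from the proof of Proposition~\ref{prop 4.3}---the comparison of Proposition~\ref{monotonocity 2} against \eqref{linear growth}---and applies it directly at $x$, which is a mild streamlining of the same idea.
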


\begin{rmk}
The blowing down analysis in this section can be preformed for
solutions of \eqref{equation} with polynomial growth. In fact, we
can show that any solution with polynomial growth satisfies
\[\lim\limits_{r\to+\infty}N(r)<+\infty.\]
The blowing down analysis for solutions satisfying this condition
was given in \cite{BTWW}.
\end{rmk}

\section{An Alt-Caffarelli-Friedman monotonicity formula}
\numberwithin{equation}{section}
 \setcounter{equation}{0}
In \cite{NTTV}, Terracini et. al. proved an
Alt-Caffarelli-Friedman type monotonicity for solutions of
\eqref{equation}. In this section we improve their result and
establish a sharp form of the Alt-Caffarelli-Friedman type
monotonicity formula (see Theorem \ref{thm ACF monotonicity} below). In this section we still use $(u,v)$ to denote a solution
of \eqref{equation} on $\mathbb{R}^n$ with linear growth at infinity.

\subsection{The proof}
Before going into the proof, we introduce a useful tool. Fix the
polar coordinates on $\mathbb{S}^{n-1}$ as
$(\cos\alpha,\sin\alpha\cos\alpha_2,\sin\alpha_1\sin\alpha\cdots\sin\alpha_{n-1})$
for $\alpha\in[0,\pi]$ and $\alpha_i\in(0,2\pi)$, $2\leq i\leq n-1$.
Let $\bar{u},\bar{v}$ be two nonnegative
 functions in $L^1(\mathbb{S}^{n-1})$, their rearrangements are two
functions $\bar{u}^*,\bar{v}^*$ satisfying
\begin{enumerate}
\item they depend on $\alpha$ only;
\item $\bar{u}^*$ is non-increasing in $\alpha$ and $\bar{v}^*$ is non-decreasing in
$\alpha$;
\item for every $t>0,
|\{\bar{u}>t\}|=|\{\bar{u}^*>t\}|,|\{\bar{v}>t\}|=|\{\bar{v}^*>t\}|$.
Here $|A|$ denotes the area measure of $A\subset\mathbb{S}^{n-1}$.
\end{enumerate}
Note that this is only the Steiner symmetrization rearrangement (see \cite{LL} for more details), adapted to our special setting
of two component functions $(\bar{u},\bar{v})$ defined on the unit sphere $\mathbb{S}^{n-1}$.

We know if $\bar{u},\bar{v}\in H^1(\mathbb{S}^{n-1})$, then
$\bar{u}^*,\bar{v}^*\in H^1(\mathbb{S}^{n-1})$ (see \cite{F-H}) and
\begin{equation}\label{5.2}
\int_{\mathbb{S}^{n-1}}|\nabla_\theta\bar{u}^*|^2\leq\int_{\mathbb{S}^{n-1}}|\nabla_\theta\bar{u}|^2,\
\ \ \
\int_{\mathbb{S}^{n-1}}|\nabla_\theta\bar{v}^*|^2\leq\int_{\mathbb{S}^{n-1}}|\nabla_\theta\bar{v}|^2.
\end{equation}

Note that in our definition we make these two functions as separated
as possible. More precisely, we have ( similar to the rearrangement inequality \cite[Theorem 3.4]{LL})
\begin{lem}\label{lem 5.2}
If $\bar{u},\bar{v}\in L^2(\mathbb{S}^{n-1})$ are nonnegative and
$\bar{u}^*,\bar{v}^*$ are defined as above, then
\[\int_{\mathbb{S}^{n-1}}\bar{u}^*\bar{v}^*\leq\int_{\mathbb{S}^{n-1}}\bar{u}\bar{v}.\]
\end{lem}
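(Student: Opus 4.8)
**Proof plan for Lemma 2.13 (the rearrangement inequality $\int \bar u^*\bar v^* \le \int \bar u\bar v$).**

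The plan is to reduce the statement to the one-dimensional bathtub/rearrangement inequality on the interval $[0,\pi]$ (with the weight $\sin^{n-2}\alpha\,d\alpha$ coming from the spherical measure). First I would recall that, since $\bar u^*$ and $\bar v^*$ depend only on the polar angle $\alpha$, one has $\int_{\mathbb S^{n-1}}\bar u^*\bar v^* = \omega_{n-2}\int_0^\pi \bar u^*(\alpha)\bar v^*(\alpha)\sin^{n-2}\alpha\,d\alpha$, where $\omega_{n-2}=|\mathbb S^{n-2}|$. So it suffices to produce, for the left-hand side $\int_{\mathbb S^{n-1}}\bar u\bar v$, a lower bound of exactly this form. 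The natural device is the layer-cake (Cavalieri) representation: write $\bar u=\int_0^\infty \mathbf 1_{\{\bar u>s\}}\,ds$ and $\bar v=\int_0^\infty \mathbf 1_{\{\bar v>t\}}\,dt$, so that
\[
\int_{\mathbb S^{n-1}}\bar u\bar v \;=\; \int_0^\infty\!\!\int_0^\infty \big|\{\bar u>s\}\cap\{\bar v>t\}\big|\,ds\,dt,
\]
and similarly for $\bar u^*,\bar v^*$. Because property (3) in the definition of the rearrangement says the superlevel sets have the same measure, $|\{\bar u>s\}|=|\{\bar u^*>s\}|=:a(s)$ and $|\{\bar v>t\}|=|\{\bar v^*>t\}|=:b(t)$, it is enough to show the pointwise-in-$(s,t)$ inequality
\[
\big|\{\bar u^*>s\}\cap\{\bar v^*>t\}\big|\;\le\;\big|\{\bar u>s\}\cap\{\bar v>t\}\big|
\]
whenever the two factor sets on each side have the prescribed measures $a(s)$ and $b(t)$.

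The key geometric point is then this: $\{\bar u^*>s\}$ is a spherical cap around the north pole $\{\alpha=0\}$ (because $\bar u^*$ is non-increasing in $\alpha$), namely $\{\alpha<\rho_1\}$ with $|\{\alpha<\rho_1\}|=a(s)$; and $\{\bar v^*>t\}$ is a spherical cap around the south pole $\{\alpha=\pi\}$, namely $\{\alpha>\rho_2\}$ with $|\{\alpha>\rho_2\}|=b(t)$. Two such antipodal caps have the smallest possible intersection among all pairs of sets of those fixed measures: if $a(s)+b(t)\le|\mathbb S^{n-1}|$ the antipodal caps are disjoint, so the left side above is $0$ and the inequality is trivial; if $a(s)+b(t)>|\mathbb S^{n-1}|$ then any two sets $A,B$ with $|A|=a(s)$, $|B|=b(t)$ satisfy $|A\cap B|=|A|+|B|-|A\cup B|\ge a(s)+b(t)-|\mathbb S^{n-1}|$, and the antipodal caps achieve exactly this value $|\{\alpha<\rho_1\}|+|\{\alpha>\rho_2\}|-|\mathbb S^{n-1}|$ (one checks $\rho_1\ge\rho_2$ in this case, so the caps overlap in the band $\rho_2<\alpha<\rho_1$, of measure precisely $a(s)+b(t)-|\mathbb S^{n-1}|$). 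Either way the pointwise inequality holds; integrating in $s$ and $t$ via the layer-cake formula gives $\int\bar u^*\bar v^*\le\int\bar u\bar v$.

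I expect the main (and essentially the only) obstacle to be the careful bookkeeping of measures when the superlevel sets are not exact caps — i.e.\ verifying that property (3) genuinely forces $|\{\bar u^*>s\}|=|\{\bar u>s\}|$ and likewise for $\bar v$, including at values of $s$ or $t$ that are atoms of the distribution function (where one should use, say, $\{\bar u\ge s\}$ vs.\ $\{\bar u>s\}$ consistently, noting they differ on a set of measure zero for all but countably many $s$, which does not affect the $ds$-integral). A secondary technical point is measurability/integrability in the double integral, which is handled by Tonelli's theorem since all integrands are nonnegative, together with $\bar u,\bar v\in L^2(\mathbb S^{n-1})\subset L^1(\mathbb S^{n-1})$ (the sphere has finite measure), so $\int\bar u\bar v<\infty$ by Cauchy–Schwarz and the layer-cake manipulations are justified. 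Once these routine points are dispatched, the geometric heart — antipodal caps minimize intersection — is elementary and the conclusion follows immediately. This is of course the analogue, adapted to the two–component setting on $\mathbb S^{n-1}$, of the classical rearrangement inequality as in \cite[Theorem 3.4]{LL}.
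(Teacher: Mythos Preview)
Your proposal is correct and follows essentially the same approach as the paper's proof: both use the layer-cake representation to reduce to the pointwise set inequality $|\{\bar u^*>s\}\cap\{\bar v^*>t\}|\le|\{\bar u>s\}\cap\{\bar v>t\}|$, and then argue via the two cases (disjoint antipodal caps, or inclusion--exclusion when $a(s)+b(t)>|\mathbb S^{n-1}|$). The paper's version is slightly terser, omitting the measure-theoretic caveats you flag, but the substance is identical.
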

\begin{proof}
By the Fubini theorem,
\[\int_{\mathbb{S}^{n-1}}\bar{u}\bar{v}=\int_0^{+\infty}\int_0^{+\infty}|\{\bar{u}>t\}\cap\{\bar{v}>s\}|dtds.\]
So we only need to prove for every $t,s\in(0,+\infty)$,
\[|\{\bar{u}>t\}\cap\{\bar{v}>s\}|\geq|\{\bar{u}^*>t\}\cap\{\bar{v}^*>s\}|.\]
That is, for every two measurable sets $A,B\subset\mathbb{S}^{n-1}$,
let $A^*,B^*$ be the rearrangement defined above. Then
\[|A\cap B|\geq|A^*\cap B^*|.\]
First we note that this inequality is trivial if $|A^*\cap B^*|=0$.
Next if $|A^*\cap B^*|>0$, by noting that $A^*$ and $B^*$ are
spherical caps with opposite centers, we must have $A^*\cup
B^*=\mathbb{S}^{n-1}$ and trivially $|A\cup B|\leq|A^*\cup
B^*|$. Combining this with the definition of the rearrangement we get
\[|A\cap B|=|A|+|B|-|A\cup B|\geq|A^*|+|B^*|-|A^*\cup B^*|=|A^*\cap B^*|.\]
This finishes the proof.
\end{proof}

In the following we denote, for $x\geq 0$,
\[\gamma(x)=\sqrt{\left(\frac{n-2}{2}\right)^2+x}-\frac{n-2}{2}.\]
\begin{lem}
For every $\lambda\geq 1$, there exists a positive constant $C$,
which depends only on the dimension $n$ and $\lambda$, such that for
every $\kappa\geq 1$ and $\bar{u}_\kappa, \bar{v}_\kappa\in
H^1(\mathbb{S}^{n-1})$ satisfying
\[\int_{\mathbb{S}^{n-1}}\bar{u}_\kappa^2=1,\ \ \ \ \int_{\mathbb{S}^{n-1}}\bar{v}_\kappa^2=\lambda_{\kappa}^2,\]
where $\frac{1}{\lambda}\leq\lambda_{\kappa}\leq\lambda$, then
\[\gamma\left(\frac{\int_{\mathbb{S}^{n-1}}|\nabla_\theta \bar{u}_\kappa|^2+\kappa\bar{u}_\kappa^2\bar{v}_\kappa^2}
{\int_{\mathbb{S}^{n-1}}\bar{u}_\kappa^2}\right)+
\gamma\left(\frac{\int_{\mathbb{S}^{n-1}}|\nabla_\theta
\bar{v}_\kappa|^2+\kappa\bar{u}_\kappa^2\bar{v}_\kappa^2}
{\int_{\mathbb{S}^{n-1}}\bar{v}_\kappa^2}\right)\geq
2-C\kappa^{-1/4}.\]
\end{lem}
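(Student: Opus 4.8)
The plan is to use the Steiner-type symmetrization $(\bar u_\kappa,\bar v_\kappa)\mapsto(\bar u_\kappa^*,\bar v_\kappa^*)$ introduced just above to reduce the quotient estimate on $\mathbb{S}^{n-1}$ to an essentially one-dimensional problem, where sharp information is available from \cite{blwz}. First I would replace $(\bar u_\kappa,\bar v_\kappa)$ by their rearrangements. By \eqref{5.2} the Dirichlet energies only decrease, by Lemma \ref{lem 5.2} the interaction term $\kappa\int\bar u^2\bar v^2$ only decreases (pointwise we can also arrange $\int(\bar u^*)^2\bar v^{*2}\le\int\bar u^2\bar v^2$ since the argument of Lemma \ref{lem 5.2} applies verbatim to the product $\bar u^2\bar v^2$), and the $L^2$ norms are preserved by property (3) of the rearrangement. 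Since $\gamma$ is increasing, each of the two Rayleigh-type quotients can only decrease under this replacement, so it suffices to prove the inequality for functions depending only on the polar angle $\alpha$, with $\bar u_\kappa^*$ nonincreasing and $\bar v_\kappa^*$ nondecreasing in $\alpha\in[0,\pi]$.

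Once on the axis, the functional $\int_{\mathbb{S}^{n-1}}|\nabla_\theta w|^2 = c_n\int_0^\pi (w')^2(\sin\alpha)^{n-2}\,d\alpha$ becomes a weighted one-dimensional Dirichlet integral, and the pair $(\bar u^*,\bar v^*)$ is now a competitor for a one-dimensional minimization problem of the type analyzed in \cite{blwz} (this is exactly the reduction alluded to in the introduction and in \cite[Theorem 5.6]{BTWW}). The key point is that $\gamma$ is precisely the function converting the Almgren-type quotient into the homogeneity degree of a harmonic cone: if $\bar u,\bar v$ were supported on complementary spherical caps with zero interaction, $\gamma$ of the first Rayleigh quotient would be the degree of the homogeneous harmonic extension of $\bar u$, and the two degrees sum to at least $2$ (the classical Friedland–Hayman / ACF inequality on the sphere, with equality for the two half-space linear functions $x_1^\pm$). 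The content here is the quantitative defect $C\kappa^{-1/4}$ coming from the fact that the interaction is not exactly zero but only penalized by $\kappa$.

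To get the $\kappa^{-1/4}$ rate I would argue as follows. Consider the minimization of $\gamma(\cdot)+\gamma(\cdot)$ (or, more conveniently, of a suitable combined functional) over axially symmetric pairs with the prescribed $L^2$ constraints and fixed $\kappa$; a minimizing pair $(\bar u_\kappa^*,\bar v_\kappa^*)$ solves a one-dimensional ODE system for which the uniform Lipschitz bounds of \cite{blwz} apply (this is the crucial gain: in dimension one, unlike the general case, uniform Lipschitz continuity of solutions to the $\kappa$-problem is known). Using this Lipschitz control one estimates the "width'' of the region where both $\bar u_\kappa^*$ and $\bar v_\kappa^*$ are non-negligible to be $O(\kappa^{-1/4})$: on an interval of length $\ell$ where $\bar u^*\bar v^*\ge c$, the interaction energy is $\gtrsim\kappa\ell$, while the Dirichlet energy needed to create such an interval against the Lipschitz bound is $\gtrsim \ell^{-1}$ (from transition cost) plus the constraint forces these to balance, yielding $\ell\sim\kappa^{-1/2}$ and a resulting perturbation of the separated caps of size $\sim\ell^{1/2}=\kappa^{-1/4}$ in the relevant angular measure; feeding this into the monotonicity/convexity properties of $\gamma$ near the minimizers $x_1^\pm$ gives the stated $2-C\kappa^{-1/4}$. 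I expect the main obstacle to be precisely this last quantitative step: turning the qualitative convergence of the one-dimensional minimizers to the pair of complementary caps into the explicit exponent $1/4$, which requires carefully combining the uniform Lipschitz estimate from \cite{blwz}, the eigenvalue perturbation behavior of the spherical Laplacian under small perturbations of a spherical cap, and the nondegeneracy of the one-dimensional profile. Handling the constraint $\tfrac1\lambda\le\lambda_\kappa\le\lambda$ uniformly (so that $C$ depends on $\lambda$ but not on $\kappa$ or $\lambda_\kappa$) is a routine compactness addendum once the $\lambda_\kappa=1$ case is understood.
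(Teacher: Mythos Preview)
Your overall strategy matches the paper's: reduce by Steiner symmetrization to axially symmetric competitors, pass to the minimizer of the constrained problem (which then satisfies a one-dimensional ODE system), invoke the uniform Lipschitz bounds of \cite{blwz}, and compare with the Friedland--Hayman/ACF inequality for disjointly supported functions. That much is right.

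The gap is exactly where you flag it: your heuristic for the exponent $1/4$ (balancing a ``transition cost'' $\ell^{-1}$ against interaction energy $\kappa\ell$ to get a width $\ell\sim\kappa^{-1/2}$, then taking a square root) is vague and does not by itself produce a clean comparison with the ACF lower bound $2$. The paper's mechanism is more concrete and you are missing two specific ingredients. First, from the one-dimensional Lipschitz bound one proves the \emph{pointwise} product estimate
\[
\bar u_\kappa \bar v_\kappa \le C\kappa^{-1/2}
\]
(by a blow-up argument: if $\kappa^{1/2}\bar u_\kappa(\alpha_\kappa)\bar v_\kappa(\alpha_\kappa)\to\infty$, rescale around $\alpha_\kappa$ and use Lemma~\ref{estimate exponential decay} to contradict). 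This immediately gives $\int\kappa\bar u_\kappa^2\bar v_\kappa^2\le C\kappa^{-1/4}$, since at each point one of $\bar u_\kappa,\bar v_\kappa$ is $\le C\kappa^{-1/4}$. Second, rather than perturbing spherical caps, one compares directly with the \emph{disjointly supported} pair
\[
f_\kappa=(\bar u_\kappa-\bar v_\kappa)^+,\qquad g_\kappa=(\bar u_\kappa-\bar v_\kappa)^-,
\]
for which the ACF/Friedland--Hayman inequality \eqref{min on sphere} gives $\gamma(\cdot)+\gamma(\cdot)\ge2$ exactly. The pointwise product bound yields $\|f_\kappa-\bar u_\kappa\|_{L^2}^2\le C\kappa^{-1/2}$, and an integration by parts (using that $\partial\{\bar u_\kappa>\bar v_\kappa\}$ is a single circle, by monotonicity) gives $\int|\nabla_\theta f_\kappa|^2\le\int|\nabla_\theta\bar u_\kappa|^2+\kappa\lambda_\kappa^2\bar u_\kappa^2\bar v_\kappa^2+C\kappa^{-1/4}$. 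Feeding these into $\gamma$ and using its Lipschitz behavior near $n-1$ closes the argument with the stated rate. Your eigenvalue-perturbation-of-caps picture could perhaps be made to work, but the $(\bar u-\bar v)^\pm$ comparison is the clean route and is what you are missing.
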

\begin{proof}
We divide the proof into three steps. The second step is not so
necessary for the proof. We include it here only to make the picture more
clearer.

{\bf Step 1.} We will consider the constrained minimization problem
\[\min\gamma\left(\frac{\int_{\mathbb{S}^{n-1}}|\nabla_\theta \bar{u}|^2+\kappa\bar{u}^2\bar{v}^2}
{\int_{\mathbb{S}^{n-1}}\bar{u}^2}\right)+
\gamma\left(\frac{\int_{\mathbb{S}^{n-1}}|\nabla_\theta
\bar{v}|^2+\kappa\bar{u}^2\bar{v}^2}
{\int_{\mathbb{S}^{n-1}}\bar{v}^2}\right),\] for $\bar{u},
\bar{v}\in H^1(\mathbb{S}^{n-1})$ satisfying
\[\int_{\mathbb{S}^{n-1}}\bar{u}^2=1,\ \ \ \
\int_{\mathbb{S}^{n-1}}\bar{v}^2=\lambda_{\kappa}^2.\]

After replacing $\bar{v}$ by $\lambda_{\kappa}^{-1}\bar{v}$, we are
led to consider a new minimization problem
\[\min\gamma\left(\int_{\mathbb{S}^{n-1}}|\nabla_\theta \bar{u}|^2+\kappa\lambda_{\kappa}^2\bar{u}^2\bar{v}^2\right)+
\gamma\left(\int_{\mathbb{S}^{n-1}}|\nabla_\theta
\bar{v}|^2+\kappa\bar{u}^2\bar{v}^2\right),\] for $\bar{u},
\bar{v}\in H^1(\mathbb{S}^{n-1})$ satisfying
\[\int_{\mathbb{S}^{n-1}}\bar{u}^2=\int_{\mathbb{S}^{n-1}}\bar{v}^2=1.\]
The direct method shows that the minimizer to this  minimization
problem, $(\bar{u}_\kappa,\bar{v}_\kappa)$, exists. Denote
\[x_\kappa=\int_{\mathbb{S}^{n-1}}|\nabla_\theta \bar{u}_\kappa|^2
+\kappa\lambda_{\kappa}^2\bar{u}_\kappa^2\bar{v}_\kappa^2,~~~
y_\kappa=\int_{\mathbb{S}^{n-1}}|\nabla_\theta
\bar{v}_\kappa|^2+\kappa\bar{u}_\kappa^2\bar{v}_\kappa^2.\] There
exist two Lagrange multipliers $\lambda_{1,\kappa}$ and
$\lambda_{2,\kappa}$ such that
\begin{equation*}
\left\{ \begin{aligned}
 &-\Delta_\theta \bar{u}_\kappa
 +\left(\lambda_{\kappa}^2+\frac{\gamma^{\prime}(y_\kappa)}{\gamma^\prime(x_\kappa)}\right)\kappa \bar{u}_\kappa\bar{v}_\kappa^2
 =\frac{\lambda_{1,\kappa}}{\gamma^\prime(x_\kappa)}\bar{u}_\kappa, \\
 &-\Delta_\theta \bar{v}_\kappa
 +\left(1+\frac{\lambda_{\kappa}^2\gamma^{\prime}(x_\kappa)}{\gamma^\prime(y_\kappa)}\right)\kappa \bar{v}_\kappa\bar{u}_\kappa^2
 =\frac{\lambda_{2,\kappa}}{\gamma^\prime(y_\kappa)}\bar{v}_\kappa.
                          \end{aligned} \right.
\end{equation*}
Moreover, by Lemma \ref{lem 5.2}, we can assume that
$\bar{u}_\kappa$ and $\bar{v}_\kappa$ depend only on $\alpha$, and
one is non-increasing in $\alpha$, the other one non-decreasing in
$\alpha$.
\par
By choosing a test function of the form $(\phi^+,\phi^-)$, where
$\phi$ is an eigenfunction of $-\Delta_\theta$ corresponding to the
eigenvalue $n-1$, with $\int_{\mathbb{S}^{n-1}}\phi^2=2$, we
can get the bound
\begin{equation}\label{5.02}
\gamma(x_\kappa)+\gamma(y_\kappa)\leq 2.
\end{equation}
 Hence $\bar{u}_\kappa$ and $\bar{v}_\kappa$
are uniformly bounded in $H^1(\mathbb{S}^{n-1})$. Multiplying the
equations by $\bar{u}_\kappa$ and $\bar{v}_\kappa$ respectively and
integrating by parts, we get
\begin{equation}\label{5.3}
\left\{ \begin{aligned}
 &x_\kappa+\frac{\gamma^\prime(y_\kappa)}{\gamma^\prime(x_\kappa)}
 \int_{\mathbb{S}^{n-1}}\kappa\bar{u}_\kappa^2\bar{v}_\kappa^2
 =\frac{\lambda_{1,\kappa}}{\gamma^\prime(x_\kappa)}, \\
 &y_\kappa+\frac{\lambda_{\kappa}^2\gamma^\prime(x_\kappa)}{\gamma^\prime(y_\kappa)}
 \int_{\mathbb{S}^{n-1}}\kappa\bar{u}_\kappa^2\bar{v}_\kappa^2
 =\frac{\lambda_{2,\kappa}}{\gamma^\prime(y_\kappa)}.
                          \end{aligned} \right.
\end{equation}
In particular, $\lambda_{1,\kappa}$ and $\lambda_{2,\kappa}$ are
positive and uniformly bounded.
\par
{\bf Step 2.} Choosing
\[\xi_\kappa=\sqrt{\frac{\lambda_{\kappa}^2+\frac{\gamma^{\prime}(y_\kappa)}{\gamma^\prime(x_\kappa)}}
{1+\frac{\lambda_{\kappa}^2\gamma^{\prime}(x_\kappa)}{\gamma^\prime(y_\kappa)}}}\]
and defining
\[\hat{u}_\kappa:=\bar{u}_\kappa,\ \ \ \ \hat{v}_\kappa:=\xi_\kappa\bar{v}_\kappa,\]
we have
\begin{equation}\label{an equation on sphere}
\left\{ \begin{aligned}
 &-\Delta_\theta \hat{u}_\kappa
 +\hat{\kappa} \hat{u}_\kappa\hat{v}_\kappa^2
 =\hat{\lambda}_{1,\kappa}\hat{u}_\kappa, \\
 &-\Delta_\theta \hat{v}_\kappa
 +\hat{\kappa} \hat{v}_\kappa\hat{u}_\kappa^2
 =\hat{\lambda}_{2,\kappa}\hat{v}_\kappa.
                          \end{aligned} \right.
\end{equation}
Here
\[\hat{\kappa}=\kappa\left(1+\frac{\lambda_{\kappa}^2\gamma^{\prime}(x_\kappa)}{\gamma^\prime(y_\kappa)}\right),
\ \ \
\hat{\lambda}_{1,\kappa}=\frac{\lambda_{1,\kappa}}{\gamma^\prime(x_\kappa)},
\ \ \
\hat{\lambda}_{2,\kappa}=\frac{\lambda_{2,\kappa}}{\gamma^\prime(y_\kappa)}.\]
 As $\kappa\to+\infty$, $\hat{\kappa}\to+\infty$.
 Without loss of generality we can assume that there exist nonnegative
constants $\xi_\infty$, $\hat{\lambda}_{1,\infty}$ and
$\hat{\lambda}_{2,\infty}$ such that
\[\lim\limits_{\kappa\to+\infty}\xi_\kappa=\xi_\infty,\ \ \
\lim\limits_{\kappa\to+\infty}\hat{\lambda}_{1,\kappa}=\hat{\lambda}_{1,\infty},\
\ \
\lim\limits_{\kappa\to+\infty}\hat{\lambda}_{2,\kappa}=\hat{\lambda}_{2,\infty}.\]
We can also get a uniform upper bound of $\hat{u}_\kappa$ and
$\hat{v}_\kappa$ by the Moser iteration, using the uniform
$H^1(\mathbb{S}^{n-1})$ bound on  $\hat{u}_\kappa$ and
$\hat{v}_\kappa$ and the elliptic inequalities
\[-\Delta_\theta \hat{u}_\kappa
 \leq\hat{\lambda}_{1,\kappa}\hat{u}_\kappa, \ \ \ \ -\Delta_\theta \hat{v}_\kappa
 \leq\hat{\lambda}_{2,\kappa}\hat{v}_\kappa.\]
Similar to the proof of Theorem \ref{lem 2.3}, there is a solution $w$ of
\begin{equation}\label{equation for w}
-\Delta_\theta w=\hat{\lambda}_{1,\infty}w^+-\hat{\lambda}_{2,\infty}w^-
\end{equation}
such that $\hat{u}_\kappa\to w^+$, $\hat{v}_\kappa\to w^-$ in
$C(\mathbb{S}^{n-1})\cap H^1(\mathbb{S}^{n-1})$. Moreover,
\[\lim\limits_{\kappa\to+\infty}\int_{\mathbb{S}^{n-1}}\hat{\kappa}\hat{u}_\kappa^2\hat{v}_\kappa^2=0.\]
This implies
\begin{equation}\label{5.4}
\lim\limits_{\kappa\to+\infty}\int_{\mathbb{S}^{n-1}}\kappa\bar{u}_\kappa^2\bar{v}_\kappa^2=0.
\end{equation}
Note that
\[\int_{\mathbb{S}^{n-1}}(w^+)^2=\lim\limits_{\kappa\to+\infty}\int_{\mathbb{S}^{n-1}}\hat{u}_\kappa^2=1,
\ \ \ \
\int_{\mathbb{S}^{n-1}}(w^-)^2=\lim\limits_{\kappa\to+\infty}\int_{\mathbb{S}^{n-1}}\hat{v}_\kappa^2=\xi_\infty^2.\]
Hence, with the help of \eqref{5.4} we get
\begin{eqnarray*}
\lim\limits_{\kappa\to+\infty}\int_{\mathbb{S}^{n-1}}|\nabla_\theta\bar{u}_\kappa|^2
+\kappa\lambda_\kappa^2\bar{u}_\kappa^2\bar{v}_\kappa^2 =
\lim\limits_{\kappa\to+\infty}\int_{\mathbb{S}^{n-1}}|\nabla_\theta\hat{u}_\kappa|^2
+\hat{\kappa}\hat{u}_\kappa^2\hat{v}_\kappa^2
=\frac{\int_{\mathbb{S}^{n-1}}|\nabla_\theta
w^+|^2}{\int_{\mathbb{S}^{n-1}}(w^+)^2}.
\end{eqnarray*}
\begin{eqnarray*}
\lim\limits_{\kappa\to+\infty}\int_{\mathbb{S}^{n-1}}|\nabla_\theta\bar{v}_\kappa|^2
+\kappa\bar{u}_\kappa^2\bar{v}_\kappa^2 =
\lim\limits_{\kappa\to+\infty}\xi_\kappa^{-2}\int_{\mathbb{S}^{n-1}}|\nabla_\theta\hat{v}_\kappa|^2
+\hat{\kappa}\hat{u}_\kappa^2\hat{v}_\kappa^2
=\frac{\int_{\mathbb{S}^{n-1}}|\nabla_\theta
w^-|^2}{\int_{\mathbb{S}^{n-1}}(w^-)^2}.
\end{eqnarray*}
Letting $\kappa\to+\infty$ in \eqref{5.02}, we get
\[\gamma\left(\frac{\int_{\mathbb{S}^{n-1}}|\nabla_\theta
w^+|^2}{\int_{\mathbb{S}^{n-1}}(w^+)^2}\right)+\gamma\left(\frac{\int_{\mathbb{S}^{n-1}}|\nabla_\theta
w^-|^2}{\int_{\mathbb{S}^{n-1}}(w^-)^2}\right)\leq 2.\] By
\cite{ACF} (see also \cite{F-H} and \cite{HHT}),
\begin{equation}\label{min on sphere}
\min_{w\in
H^1(\mathbb{S}^{n-1})}\gamma\left(\frac{\int_{\mathbb{S}^{n-1}}|\nabla_\theta
w^+|^2}{\int_{\mathbb{S}^{n-1}}(w^+)^2}\right)+\gamma\left(\frac{\int_{\mathbb{S}^{n-1}}|\nabla_\theta
w^-|^2}{\int_{\mathbb{S}^{n-1}}(w^-)^2}\right)=2.
\end{equation}
So the above inequality must be an equality. By testing the equation of $w$, \eqref{equation for w}, with $w^+$ and $w^-$
respectively, we have
\[\frac{\int_{\mathbb{S}^{n-1}}|\nabla_\theta
w^+|^2}{\int_{\mathbb{S}^{n-1}}(w^+)^2}=\hat{\lambda}_{1,\infty},
\ \ \ \
\frac{\int_{\mathbb{S}^{n-1}}|\nabla_\theta
w^-|^2}{\int_{\mathbb{S}^{n-1}}(w^-)^2}=\hat{\lambda}_{2,\infty}.\]
Combining all of these we see $\lambda_{1,\infty}=\lambda_{2,\infty}=n-1$.
In particular, $w$ is an
eigenfunction corresponding to the eigenvalue $n-1$. Then by
\eqref{5.3},
\[\lim\limits_{\kappa\to+\infty}x_\kappa=\lim\limits_{\kappa\to+\infty}y_\kappa=n-1.\]
By the definition of $\xi_\kappa$ we get
\[\lim\limits_{\kappa\to+\infty}\xi_\kappa=1.\]
By the convergence of $\hat{u}_\kappa$ and $\hat{v}_\kappa$ in
$C(\mathbb{S}^{n-1})\cap H^1(\mathbb{S}^{n-1})$, $\bar{u}_\kappa\to
w^+$, $\bar{v}_\kappa\to w^-$ in $C(\mathbb{S}^{n-1})\cap
H^1(\mathbb{S}^{n-1})$. By the symmetry of $\bar{u}_\kappa$ and
$\bar{v}_\kappa$, there exists a constant $c(n)$ such that
\[w=c(n)\cos\alpha.\]
\par
{\bf Step 3.} Because $\bar{u}_\kappa$ and $\bar{v}_\kappa$ depend
on $\alpha$ only, they satisfy a system of ordinary
differential equations. By suitably modifying the arguments in
\cite{blwz} (cf. \cite[Theorem 1.1 and Lemma 2.4]{blwz}), we can prove the following

{\bf Claim.}
As $\kappa\to+\infty$, $\hat{u}_\kappa$ and $\hat{v}_\kappa$ (and hence $\bar{u}_\kappa$
and $\bar{v}_\kappa$) are uniformly Lipschitz continuous.
There exists a constant $C$ such that for all $\kappa$,
\begin{equation}\label{5.5}
\bar{u}_\kappa\bar{v}_\kappa\leq C\kappa^{-\frac{1}{2}}.
\end{equation}
{\it Proof.}
By the results obtained in the previous step, $\hat{u}_\kappa\to c(n)(\cos\alpha)^+$ and $\hat{v}_\kappa\to c(n)(\cos\alpha)^-$
uniformly on $\mathbb{S}^{n-1}$. By Lemma \ref{estimate exponential decay}, for any $\delta>0$, in $[0,\pi/2-\delta]$,
$\hat{v}_\kappa$ are exponentially small in $\hat{\kappa}$. Substituting this estimate into the equations of $\hat{u}_\kappa$ and $\hat{v}_\kappa$, we see $\hat{u}_\kappa$ and $\hat{v}_\kappa$ are uniformly Lipschitz continuous in $[0,\pi/2-\delta]$, and \eqref{5.5} holds true in this interval. By changing the positions of $\hat{u}_\kappa$ and $\hat{v}_\kappa$, these also hold true in $[\pi/2+\delta,\pi]$.

Now we can restrict our attention to the interval $[\pi/4,3\pi/4]$. Note that $\hat{u}_\kappa$ and $\hat{v}_\kappa$ satisfy
\begin{equation}\label{an equation on sphere 2}
\left\{ \begin{aligned}
 &\frac{d^2}{d\alpha^2} \hat{u}_\kappa+\left(n-2\right)\cot\alpha\frac{d}{d\alpha} \hat{u}_\kappa
 =\hat{\kappa} \hat{u}_\kappa\hat{v}_\kappa^2-\hat{\lambda}_{1,\kappa}\hat{u}_\kappa, \\
 &\frac{d^2}{d\alpha^2} \hat{v}_\kappa+\left(n-2\right)\cot\alpha\frac{d}{d\alpha} \hat{v}_\kappa
 =\hat{\kappa} \hat{v}_\kappa\hat{u}_\kappa^2-\hat{\lambda}_{2,\kappa}\hat{v}_\kappa.
                          \end{aligned} \right.
\end{equation}
By integrating \eqref{an equation on sphere} on the whole $\mathbb{S}^{n-1}$ and noting that $\hat{\lambda}_{1,\kappa}\hat{u}_\kappa$ and $\hat{\lambda}_{2,\kappa}\hat{v}_\kappa$ are uniformly bounded in $L^1(\mathbb{S}^{n-1})$, we see
$\hat{\kappa} \hat{u}_\kappa\hat{v}_\kappa^2$ and $\hat{\kappa} \hat{v}_\kappa\hat{u}_\kappa^2$ are also uniformly bounded in
$L^1(\mathbb{S}^{n-1})$. Hence they are uniformly bounded in $L^1([\pi/4,3\pi/4])$. Now for any $\alpha\in[\pi/4,3\pi/4]$, integrating the equation of $\hat{u}_\kappa$ in the interval $[\pi/4,\alpha]$, we get
\[\frac{d}{d\alpha} \hat{u}_\kappa(\alpha)-\frac{d}{d\alpha} \hat{u}_\kappa(\pi/4)=\int_{\pi/4}^\alpha\left(-\left(n-2\right)\cot\alpha\frac{d}{d\alpha} \hat{u}_\kappa
+\hat{\kappa} \hat{u}_\kappa\hat{v}_\kappa^2-\hat{\lambda}_{1,\kappa}\hat{u}_\kappa\right)d\alpha.\]
By the above discussion and the uniform bound on $\|\hat{u}_\kappa\|_{H^1(\mathbb{S}^{n-1})}$, the right hand side is uniformly bounded, independent of $\hat{\kappa}\to+\infty$ and $\alpha\in[\pi/4,3\pi/4]$. Since $\frac{d}{d\alpha} \hat{u}_\kappa(\pi/4)$ is also uniformly bounded, this gives the uniform bound of $\frac{d}{d\alpha} \hat{u}_\kappa(\alpha)$ for all $\alpha\in[0,\pi]$. The same method works for $\hat{v}_\kappa$.

To prove \eqref{5.5}, assume by the contrary that there exist
$\alpha_\kappa\in[0,\pi]$ such that
$\kappa^{1/2}\bar{u}_\kappa(\alpha_\kappa)\bar{v}_\kappa(\alpha_\kappa)\to+\infty$.
By Lemma \ref{estimate exponential decay}, both
$\bar{u}_\kappa(\alpha_\kappa)$ and $\bar{v}_\kappa(\alpha_\kappa)$
go to zero. In particular, $\alpha_\kappa$ converges to $\pi/2$. Take
$\varepsilon_\kappa=\bar{u}_\kappa(\alpha_\kappa)+\bar{v}_\kappa(\alpha_\kappa)$
and define
\[\widetilde{u}_\kappa(\alpha)=\varepsilon_\kappa^{-1}\bar{u}_\kappa(\alpha_\kappa+\varepsilon_\kappa\alpha),
\ \  \ \
\widetilde{v}_\kappa(\alpha)=\varepsilon_\kappa^{-1}\bar{v}_\kappa(\alpha_\kappa+\varepsilon_\kappa\alpha)\ \ \ \mbox{for}\ \alpha\in(-\frac{\pi}{4\varepsilon_\kappa},\frac{\pi}{4\varepsilon_\kappa}).\]
Since $\widetilde{u}_\kappa(0)+\widetilde{v}_\kappa(0)=1$, and these
two functions are uniformly Lipschitz, $(\widetilde{u}_\kappa,
\widetilde{v}_\kappa)$ converges uniformly to
$(\widetilde{u}_\infty, \widetilde{v}_\infty)$ on any compact set of
$\mathbb{R}$. Rescaling \eqref{an equation on sphere 2} we have
\[\frac{d^2}{d\alpha^2}\widetilde{u}_\kappa
=\kappa\varepsilon_\kappa^4\widetilde{u}_\kappa\widetilde{v}_\kappa^2+o(1),\ \ \frac{d^2}{d\alpha^2}\widetilde{v}_\kappa
=\kappa\varepsilon_\kappa^4\widetilde{v}_\kappa\widetilde{u}_\kappa^2+o(1),\]
where $o(1)\to0$ uniformly on any compact set of
$\mathbb{R}$. By definition and our assumption,
$\kappa\varepsilon_\kappa^4\geq
\kappa\bar{u}_\kappa(\alpha_\kappa)^2\bar{v}_\kappa(\alpha_\kappa)^2\to+\infty$.
Hence we must have $\widetilde{u}_\infty\widetilde{v}_\infty\equiv
0$. Assume that $\widetilde{u}_\infty(0)=1$. There exists a constant
$\delta>0$ such that, for $\kappa$ large,
$\widetilde{u}_\kappa\geq1/2$ in $[-\delta,\delta]$. By Lemma
\ref{estimate exponential decay}, we have
\[
\kappa^{1/2}\bar{u}_\kappa(\alpha_\kappa)\bar{v}_\kappa(\alpha_\kappa)
=\kappa^{1/2}\varepsilon_\kappa^2\widetilde{u}_\kappa(0)\widetilde{v}_\kappa(0)
\leq
C_1\kappa^{1/2}\varepsilon_\kappa^2e^{-C_2\kappa^{1/2}\varepsilon_\kappa^2},\]
which goes to $0$ because
$\kappa^{1/2}\varepsilon_\kappa^2\to+\infty$. This is a contradiction and finishes the
proof of the claim.

\eqref{5.5} implies that, for every $\theta\in\mathbb{S}^{n-1}$,
either $\bar{u}_\kappa(\theta)$ or $\bar{v}_\kappa(\theta)$ is less
than $C\kappa^{-1/4}$. Then
\begin{equation}\label{5.6}
\int_{\mathbb{S}^{n-1}}\kappa\bar{u}_\kappa^2\bar{v}_\kappa^2\leq
C\kappa^{-1/4}\int_{\mathbb{S}^{n-1}}\kappa\bar{u}_\kappa\bar{v}_\kappa^2
+\kappa\bar{v}_\kappa\bar{u}_\kappa^2\leq C\kappa^{-1/4}.
\end{equation}

Take $f_\kappa=(\bar{u}_\kappa-\bar{v}_\kappa)^+$,
$g_\kappa=(\bar{u}_\kappa-\bar{v}_\kappa)^-$. First by \eqref{5.5}
we have
\begin{equation}\label{5.7}
\int_{\mathbb{S}^{n-1}}|f_\kappa-\bar{u}_\kappa|^2\leq
\int_{\{\bar{u}_\kappa>\bar{v}_\kappa\}}\bar{v}_\kappa^2
+\int_{\{\bar{u}_\kappa<\bar{v}_\kappa\}}\bar{u}_\kappa^2 \leq
C\kappa^{-1/2}.
\end{equation}
The same estimate holds for $g_\kappa-\bar{v}_\kappa$.
\par
Next we estimate
\[\int_{\mathbb{S}^{n-1}}|\nabla_\theta f_\kappa|^2=
\int_{\{\bar{u}_\kappa>\bar{v}_\kappa\}}|\nabla_\theta\bar{u}_\kappa|^2+|\nabla_\theta\bar{v}_\kappa|^2
-2\nabla_\theta\bar{u}_\kappa\cdot\nabla_\theta\bar{v}_\kappa.\] By
the symmetry and monotonicity of $\bar{u}_\kappa$ and
$\bar{v}_\kappa$, $\partial\{\bar{u}_\kappa>\bar{v}_\kappa\}$ is a
smooth hypersurface (i.e. a circle). So by the uniform Lipschitz
continuity of $\bar{u}_\kappa$ and \eqref{5.5}, we get
\begin{eqnarray*}
\big|\int_{\{\bar{u}_\kappa>\bar{v}_\kappa\}}\nabla_\theta\bar{u}_\kappa\cdot\nabla_\theta\bar{v}_\kappa\big|
&=&\big|\int_{\partial\{\bar{u}_\kappa>\bar{v}_\kappa\}}\frac{\partial\bar{u}_\kappa}{\partial\nu}\bar{v}_\kappa
-\int_{\{\bar{u}_\kappa>\bar{v}_\kappa\}}\bar{v}_\kappa\Delta_\theta\bar{u}_\kappa\big|\\
&\leq&\int_{\partial\{\bar{u}_\kappa>\bar{v}_\kappa\}}C\kappa^{-1/4}
+C\kappa^{-1/4}\int_{\{\bar{u}_\kappa>\bar{v}_\kappa\}}\Delta_\theta\bar{u}_\kappa\\
&\leq&C\kappa^{-1/4}.
\end{eqnarray*}
Similarly we get
\[\int_{\{\bar{u}_\kappa>\bar{v}_\kappa\}}|\nabla_\theta\bar{v}_\kappa|^2\leq C\kappa^{-1/4}.\]
Combining these with \eqref{5.6}, we see
\begin{equation}\label{5.8}
\int_{\mathbb{S}^{n-1}}|\nabla_\theta f_\kappa|^2\leq
\int_{\mathbb{S}^{n-1}}|\nabla_\theta\bar{u}_\kappa|^2+\kappa\lambda_\kappa^2\bar{u}_\kappa^2\bar{v}_\kappa^2
+C\kappa^{-1/4}.
\end{equation}
We can also get similar estimates for $g_\kappa$.

By noting \eqref{5.7} and \eqref{min on sphere}, we have
\[2\leq\gamma\left(\frac{\int_{\mathbb{S}^{n-1}}|\nabla_\theta f_\kappa|^2}{\int_{\mathbb{S}^{n-1}}f_\kappa^2}\right)
+\gamma\left(\frac{\int_{\mathbb{S}^{n-1}}|\nabla_\theta
g_\kappa|^2}{\int_{\mathbb{S}^{n-1}}g_\kappa^2}\right)\leq
\gamma(x_\kappa)+\gamma(y_\kappa)+C\kappa^{-1/4}.\] This is nothing
else but a reformulation of the claim in this lemma.
\end{proof}
Now we can state the Alt-Caffarelli-Friedman monotonicity formula.
In the following we denote
\[J(r;x)=r^{-4}\left(\int_{B_r(x)}\frac{|\nabla
u(y)|^2+u(y)^2v(y)^2}{|x-y|^{n-2}}dy\right)\left( \int_{B_r(x)}\frac{|\nabla
v(y)|^2+u(y)^2v(y)^2}{|x-y|^{n-2}}dy\right).\] If $x=0$, we simply write
this as $J(r)$.
\begin{thm}\label{thm ACF monotonicity}
Let $(u,v)$ be a solution of \eqref{equation} satisfying
\eqref{linear growth}. There exists a
positive constant $C$, 
such that for every 
$r>1$,
\[e^{-Cr^{-1/2}}J(r)\] is nondecreasing in $r$.
\end{thm}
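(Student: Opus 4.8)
The plan is to differentiate $\log J(r)$, reduce the monotonicity to a per‑component Alt--Caffarelli--Friedman (ACF) estimate on spheres, and then feed in the sharp spherical inequality of the Lemma above; this follows the scheme of \cite{NTTV}, the improvement being entirely in the size of the error term. Write $J(r)=\Phi_u(r)\Phi_v(r)$, where $\Phi_w(r):=r^{-2}\int_{B_r}|y|^{2-n}\bigl(|\nabla w|^2+u^2v^2\bigr)\,dy$ for $w\in\{u,v\}$. By the coarea formula $\Phi_w'(r)=-\tfrac2r\Phi_w(r)+r^{-n}\int_{\partial B_r}(|\nabla w|^2+u^2v^2)$, hence
\[
\frac{d}{dr}\log J(r)=\frac1r\left(\frac{r^{3-n}\int_{\partial B_r}(|\nabla u|^2+u^2v^2)}{\int_{B_r}|y|^{2-n}(|\nabla u|^2+u^2v^2)}+\frac{r^{3-n}\int_{\partial B_r}(|\nabla v|^2+u^2v^2)}{\int_{B_r}|y|^{2-n}(|\nabla v|^2+u^2v^2)}-4\right),
\]
so it suffices to bound each of the two ratios from below.

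Next I would run the ACF argument component by component. Since $\Delta u=uv^2\ge0$ we have $\tfrac12\Delta(u^2)=|\nabla u|^2+u^2v^2\ge0$, so $u^2$ is subharmonic and $\int_{B_r}|y|^{2-n}(|\nabla u|^2+u^2v^2)\,dy=\tfrac12\int_{B_r}|y|^{2-n}\Delta(u^2)\,dy$; writing this in polar coordinates and using Cauchy--Schwarz together with the Poincar\'e inequality on $\partial B_r$ (exactly as in the classical ACF lemma and in \cite{NTTV}) gives
\[
\frac{d}{dr}\log\Phi_u(r)\ \ge\ \frac2r\bigl(\gamma(\mu_u(r))-1\bigr),\qquad \mu_u(r):=r^2\,\frac{\int_{\partial B_r}(|\nabla_\theta u|^2+u^2v^2)}{\int_{\partial B_r}u^2},
\]
where $\nabla_\theta$ denotes the component of $\nabla$ tangential to $\partial B_r$; the analogous bound holds for $v$. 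The only change from the classical ACF lemma is that the hard Dirichlet constraint on $\{u|_{\partial B_r}=0\}$ is replaced by the soft penalization $u^2v^2$, so that $\gamma$ is evaluated at the Rayleigh quotient $\mu_u(r)$ rather than at a first Dirichlet eigenvalue. Rescaling $\partial B_r$ to $\mathbb{S}^{n-1}$ and normalizing identifies $\gamma(\mu_u(r))$ and $\gamma(\mu_v(r))$ with the two quantities in the Lemma above for the rescaled pair $(u(r\,\cdot),v(r\,\cdot))$ restricted to $\mathbb{S}^{n-1}$, with penalization parameter $\kappa=\kappa(r)=r^2L(r)^2$.

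To close the argument I would apply that Lemma. One first normalizes $u(r\,\cdot)$ to unit $L^2(\mathbb{S}^{n-1})$-norm (which does not change $\mu_u$) and checks that the ratio $\int_{\partial B_r}v^2\big/\int_{\partial B_r}u^2$ stays bounded above and below for $r>1$: indeed $H_u(r):=r^{1-n}\int_{\partial B_r}u^2$ and $H_v(r):=r^{1-n}\int_{\partial B_r}v^2$ are each nondecreasing (their derivative is $2r^{1-n}\int_{B_r}(|\nabla w|^2+u^2v^2)\ge0$, computed as in \eqref{3.2}), and the blow-down analysis of Section 3 gives $H_u(R_j)\asymp H_v(R_j)$ along $R_j\to+\infty$; combining these with the doubling property (Proposition \ref{doubling property}) propagates comparability to all $r>1$. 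The Lemma then yields $\gamma(\mu_u(r))+\gamma(\mu_v(r))\ge 2-C\kappa(r)^{-1/4}$. By Remark \ref{growth bound}, $L(r)\ge C_\alpha r^\alpha$ for every $\alpha\in(0,1)$, so $\kappa(r)=r^2L(r)^2\ge C_\alpha r^{2+2\alpha}$ and hence $\kappa(r)^{-1/4}\le C r^{-1/2}$ for $r>1$ (in fact any exponent below $1$ would work; the stated $1/2$ suffices). Plugging the three displays together gives $\frac{d}{dr}\log J(r)\ge -C r^{-3/2}$; since $\frac{d}{dr}(-2Cr^{-1/2})=C r^{-3/2}$, this means $\frac{d}{dr}\log\!\bigl(e^{-2Cr^{-1/2}}J(r)\bigr)\ge0$, which is the assertion after renaming $2C$ as $C$.

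The main obstacle is the per-component estimate $\frac{d}{dr}\log\Phi_u\ge\frac2r(\gamma(\mu_u)-1)$: one must make the classical ACF capacity/symmetrization computation rigorous when the Dirichlet constraint is weakened to the penalization $u^2v^2$, pin down the correct Rayleigh quotient $\mu_u(r)$, and verify that it is exactly the quantity for which the Lemma above provides the sharp lower bound. Everything else --- differentiating $\log J$, the reduction to $\gamma(\mu_u)+\gamma(\mu_v)\ge 2-o(1)$, and the passage from $\kappa^{-1/4}$ to $r^{-1/2}$ --- is routine once that step and the $L^2$-balance of $u$ and $v$ on spheres are in hand.
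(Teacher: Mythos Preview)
Your outline matches the paper's proof: differentiate $\log J$, reduce to the spherical quantity $\gamma(\mu_u)+\gamma(\mu_v)$, invoke the Lemma, and integrate. The per-component inequality $\frac{d}{dr}\log\Phi_u\ge\frac{2}{r}(\gamma(\mu_u)-1)$ that you flag as the main obstacle is precisely \eqref{5.1}, which the paper lifts verbatim from \cite{NTTV}; there is nothing new to prove there. The one step that is not right as you wrote it is the $L^2$-balance $\int_{\partial B_r}u^2\asymp\int_{\partial B_r}v^2$: monotonicity of $H_u,H_v$ together with comparability along a single subsequence $R_j$ and the doubling of $H=H_u+H_v$ does not propagate comparability to all $r$, because the ratios $R_{j+1}/R_j$ need not be bounded and doubling controls only the sum. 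The paper instead invokes Proposition~\ref{prop 4.3} (which, once Corollary~\ref{coro 4.3} gives $N(r)\le1$, applies to every sequence $r\to\infty$) to get $H_u(r)/H_v(r)\to1$, and then continuity on $[1,R_0]$ furnishes a fixed $\lambda$. Once the normalization is carried out correctly the effective parameter in the Lemma is $\kappa=r^2\,r^{1-n}\!\int_{\partial B_r}u^2\ge r^2 H_u(1)$ by the monotonicity you already noted, so $\kappa^{-1/4}\le Cr^{-1/2}$ follows immediately; the appeal to Remark~\ref{growth bound} is not needed, though it does give the same bound.
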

\begin{proof}
 As in the proof of Lemma 2.5
in \cite{NTTV}, we have
\begin{equation}\label{5.1}
\frac{d}{dr}\log
J(r)\geq-\frac{4}{r}+\frac{2}{r}\left[\gamma\left(\frac{\int_{\mathbb{S}^{n-1}}|\nabla_\theta
\bar{u}|^2+r^2\bar{u}^2\bar{v}^2}
{\int_{\mathbb{S}^{n-1}}\bar{u}^2}\right)+\gamma\left(\frac{\int_{\mathbb{S}^{n-1}}|\nabla_\theta
\bar{v}|^2+r^2\bar{u}^2\bar{v}^2}
{\int_{\mathbb{S}^{n-1}}\bar{v}^2}\right)\right],
\end{equation}
where
\[\bar{u}(\theta)=u(r\theta),\ \ \ \ \bar{v}(\theta)=v(r\theta).\]
Because $u$ and $v$ are subharmonic, by the mean value inequality we
get
\[\int_{\mathbb{S}^{n-1}}\bar{u}=\frac{1}{|\partial B_r|}\int_{\partial B_r(0)}u\geq
u(0),\ \ \ \int_{\mathbb{S}^{n-1}}\bar{v}=\frac{1}{|\partial
B_r|}\int_{\partial B_r(0)}v\geq v(0).\]
 By Proposition \ref{prop 4.3}, we have
\[\lim\limits_{r\to+\infty}\frac{\int_{\mathbb{S}^{n-1}}\bar{u}^2}{\int_{\mathbb{S}^{n-1}}\bar{v}^2}=1.\]
After a normalization in $L^2(\mathbb{S}^{n-1})$ we can apply the
previous lemma (with a fixed $\lambda\geq1$ for all $r\geq 1$) to
deduce that
\[\gamma\left(\frac{\int_{\mathbb{S}^{n-1}}|\nabla_\theta
\bar{u}|^2+r^2\bar{u}^2\bar{v}^2}
{\int_{\mathbb{S}^{n-1}}\bar{u}^2}\right)+\gamma\left(\frac{\int_{\mathbb{S}^{n-1}}|\nabla_\theta
\bar{v}|^2+r^2\bar{u}^2\bar{v}^2}
{\int_{\mathbb{S}^{n-1}}\bar{v}^2}\right)\geq 2-Cr^{-1/2}.\]
Substituting this into \eqref{5.1} we get for all $r$ large
\[\frac{d}{dr}\log J(r)\geq -Cr^{-1-1/2}.\]
By taking a larger constant $C$, we know for all $r\geq 1$, $\log
J(r)-Cr^{-1/2}$ is nondecreasing in $r$. This finishes the proof.
\end{proof}

\subsection{Some consequences}
\begin{prop}\label{coro 5.3}
For a solution $(u,v)$ of \eqref{equation} satisfying \eqref{linear
growth}, there exists a constant $C$, such that for every $r>1$,
\[\frac{1}{C}\leq J(r)\leq C.\]
Moreover, the limit $\lim\limits_{r\to+\infty}J(r)\in(0,+\infty)$
exists.
\end{prop}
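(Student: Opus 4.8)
The plan is to read off both bounds and the existence of the limit from the monotonicity formula of Theorem~\ref{thm ACF monotonicity} together with the growth estimates already established in Section~3. Write
\[
\phi_u(r)=\int_{B_r(0)}\frac{|\nabla u(y)|^2+u(y)^2v(y)^2}{|y|^{n-2}}\,dy,\qquad \phi_v(r)=\int_{B_r(0)}\frac{|\nabla v(y)|^2+u(y)^2v(y)^2}{|y|^{n-2}}\,dy,
\]
so that $J(r)=r^{-4}\phi_u(r)\phi_v(r)$. For the upper bound I would show $\phi_u(r)+\phi_v(r)\le Cr^2$ for all $r>1$. The linear growth \eqref{linear growth} gives $H(r;0)\le Cr^2$ (this is exactly $L(r)\le Cr$ from Remark~\ref{growth bound}, since $L(r)^2=H(r;0)$); combining this with $N(r;0)\le 1$ from Corollary~\ref{coro 4.3} and the elementary identity $N(r;0)=D(r;0)/H(r;0)$ gives $D(r;0)\le Cr^2$, i.e.
\[
\int_{B_\rho(0)}|\nabla u|^2+|\nabla v|^2+u^2v^2\le C\rho^{n}\qquad\text{for every }\rho>1 .
\]
Decomposing $B_r(0)$ into the dyadic annuli $B_{2^{-k}r}(0)\setminus B_{2^{-k-1}r}(0)$, $k\ge 0$, on the $k$-th of which the weight $|y|^{2-n}$ is at most $C(2^{-k}r)^{2-n}$ while the energy there is at most $C(2^{-k}r)^{n}$, bounds the $k$-th contribution to $\phi_u(r)$ by $C2^{-2k}r^2$; summing over $k$ gives $\phi_u(r)\le Cr^2$, and likewise for $\phi_v$. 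Hence $J(r)\le C$.

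For the lower bound I would use that $J$ is continuous and strictly positive: since $u,v>0$ everywhere, $u^2v^2>0$, and therefore $\phi_u(r),\phi_v(r)>0$ for every $r>0$, so $J(r)>0$. By Theorem~\ref{thm ACF monotonicity}, $e^{-Cr^{-1/2}}J(r)$ is nondecreasing on $(1,+\infty)$, so for every $r>1$,
\[
J(r)\ \ge\ e^{-Cr^{-1/2}}J(r)\ \ge\ \lim_{s\to 1^{+}}e^{-Cs^{-1/2}}J(s)\ =\ e^{-C}J(1)\ =:\ \frac1C\ >\ 0 .
\]
Finally, $e^{-Cr^{-1/2}}J(r)$ is nondecreasing and, by the upper bound just proved, bounded above; hence it converges as $r\to+\infty$, and since $e^{-Cr^{-1/2}}\to 1$ the limit $\lim_{r\to+\infty}J(r)$ exists and lies in $[\tfrac1C,C]\subset(0,+\infty)$.

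The only step that is not immediate is the bound $\phi_u(r)\le Cr^2$, and within it the one substantive ingredient is $D(r;0)\le Cr^2$ — this is the place where the linear growth hypothesis and the Almgren bound $N\le 1$ from Section~3 genuinely enter. The dyadic summation removing the singular weight $|y|^{2-n}$ is elementary (and trivial when $n=2$), and once $J$ is known to be bounded and positive the monotonicity formula yields the existence of the limit with no further effort; so I do not expect any serious obstacle here.
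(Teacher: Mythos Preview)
Your argument is correct. The lower bound and the existence of the limit are proved exactly as in the paper (almost-monotonicity plus $J(1)>0$). For the upper bound, however, you take a different route from the paper. The paper establishes the pointwise estimate
\[
r^{-2}\int_{B_r(0)}\frac{|\nabla u|^2+u^2v^2}{|y|^{n-2}}\,dy\ \le\ Cr^{-n-2}\int_{B_{2r}(0)}u^2\,dy
\]
by integrating $\Delta\frac{u^2}{2}=|\nabla u|^2+u^2v^2$ against the superharmonic weight $|y|^{2-n}\eta^2$ with a cutoff $\eta$, and then invokes the linear growth directly. You instead combine $N(r;0)\le 1$ and $H(r;0)\le Cr^2$ to get the energy bound $D(\rho;0)\le C\rho^2$ and then remove the weight by a dyadic decomposition. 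Both work; your approach is perhaps more elementary, while the paper's identity (their~\eqref{5.9}) is reused several times later (Corollary~\ref{coro 5.4}, Corollary~\ref{coro 5.9}, Lemma~\ref{lem 6.2}), so it earns its keep there.

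One small point to tighten: your sentence ``the energy there is at most $C(2^{-k}r)^n$'' is justified by $D(\rho;0)\le C\rho^2$ only when $2^{-k}r>1$. For the finitely many annuli inside $B_1(0)$ you should either note that $|\nabla u|^2+u^2v^2$ is bounded there by smoothness, or simply split off $\int_{B_1(0)}|y|^{2-n}(|\nabla u|^2+u^2v^2)<\infty$ as a fixed constant and dyadically decompose only $B_r\setminus B_1$. This is a cosmetic fix, not a real gap.
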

\begin{proof}
The lower bound is guaranteed by the almost monotonicity of $J(r)$
and the fact that $u$ and $v$ are not constants. The upper bound can
be obtained by combining the linear growth condition with the
following estimate
\begin{equation}\label{5.9}
r^{-2}\int_{B_r(0)}\frac{|\nabla
u(y)|^2+u(y)^2v(y)^2}{|y|^{n-2}}dy\leq Cr^{-n-2}\int_{B_{2r}}u(y)^2dy.
\end{equation}
This is because for every $\eta\in C_0^\infty(\mathbb{R}^n)$, we have
\begin{eqnarray*}
\int_{\mathbb{R}^n}\frac{|\nabla u(y)|^2+u(y)^2v(y)^2}{|y|^{n-2}}\eta(y)^2dy
&=&\int_{\mathbb{R}^n}\Delta\frac{u(y)^2}{2}|y|^{2-n}\eta(y)^2dy\\
&=&\int_{\mathbb{R}^n}\frac{u(y)^2}{2}\Delta\left(|y|^{2-n}\eta(y)^2\right)dy\\
&\leq&\int_{\mathbb{R}^n}\frac{u(y)^2}{2}\left(2\nabla|y|^{2-n}\cdot\nabla\eta(y)^2+|y|^{2-n}\Delta\eta(y)^2\right)dy.
\end{eqnarray*}
In the above, $\Delta\left(|y|^{2-n}\eta(y)^2\right)$ is understood in the distributional sense. In particular, to get the last inequality, we used the fact that $|y|^{2-n}$ is a superharmonic function, which implies that
\begin{eqnarray*}
\Delta\left(|y|^{2-n}\eta(y)^2\right)&=&\Delta|y|^{2-n}\eta(y)^2+2\nabla|y|^{2-n}\cdot\nabla\eta(y)^2+|y|^{2-n}\Delta\eta(y)^2\\
&\leq&2\nabla|y|^{2-n}\cdot\nabla\eta(y)^2+|y|^{2-n}\Delta\eta(y)^2.
\end{eqnarray*}
This inequality can also be proved by first cutting a ball $B_\delta(0)$, integrating by parts on $\mathbb{R}^n\setminus B_\delta(0)$, and then taking the limit $\delta\to0$. After the integration by parts, there is a term
\[-\frac{n-2}{2}\delta^{1-n}\int_{\partial B_\delta(0)}u^2\eta^2,\]
which does not converge to $0$ as $\delta\to0$. However, it has a favorable sign, thus can be thrown away in the last step.

By choosing $\eta\equiv1$ in $B_r(0)$, $\eta\equiv0$ outside
$B_{2r}(0)$ and $|\nabla\eta|^2+|\Delta\eta|\leq\frac{16}{r^2}$ we
get \eqref{5.9}.

Finally, it is clear that
\[\lim\limits_{r\to+\infty}J(r)=\lim\limits_{r\to+\infty}e^{-Cr^{-\frac{1}{2}}}J(r)\in(\frac{1}{C},C).\qedhere\]
\end{proof}

This implies a nondegeneracy result.
\begin{coro}\label{coro 5.4}
For a solution $(u,v)$ of \eqref{equation} satisfying \eqref{linear
growth}, there exists a positive constant $C$, such that for all $r>4$,
\[\int_{\partial B_r(0)}u+v\geq \frac{1}{C}r^n.\]
\end{coro}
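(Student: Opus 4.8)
The plan is to chain together four ingredients: the lower bound $J(r)\ge 1/C$ from Proposition \ref{coro 5.3}, the energy–mass comparison \eqref{5.9}, the linear growth bound \eqref{linear growth}, and the monotonicity of $H(r;0)=r^{1-n}\int_{\partial B_r(0)}u^2+v^2$ recorded in \eqref{3.2}.

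First I would write $J(r)=r^{-4}A(r)B(r)$ with $A(r)=\int_{B_r(0)}\frac{|\nabla u|^2+u^2v^2}{|y|^{n-2}}dy$ and $B(r)$ the analogous quantity for $v$. Proposition \ref{coro 5.3} gives $A(r)B(r)\ge r^4/C$. Applying \eqref{5.9} to $u$ and to $v$ yields $A(r)\le Cr^{-n}\int_{B_{2r}(0)}u^2$ and $B(r)\le Cr^{-n}\int_{B_{2r}(0)}v^2$, so that
\[\Big(\int_{B_{2r}(0)}u^2\Big)\Big(\int_{B_{2r}(0)}v^2\Big)\ge \tfrac1C\,r^{2n+4}.\]
Next I would deconvolve this product using the a priori ceiling coming from \eqref{linear growth}: since $u+v\le C(1+|y|)$ we have $\int_{B_{2r}(0)}u^2\le Cr^{n+2}$ and $\int_{B_{2r}(0)}v^2\le Cr^{n+2}$ for $r\ge1$; feeding the bound on $\int_{B_{2r}}v^2$ back into the product inequality gives $\int_{B_{2r}(0)}u^2\ge \tfrac1C r^{n+2}$, and symmetrically for $v$, hence $\int_{B_{2r}(0)}u^2+v^2\ge \tfrac1C r^{n+2}$.

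Finally I would pass from the ball to the sphere. By \eqref{3.2} the function $\rho\mapsto H(\rho;0)=\rho^{1-n}\int_{\partial B_\rho(0)}u^2+v^2$ is nondecreasing, so
\[\int_{B_{2r}(0)}u^2+v^2=\int_0^{2r}\rho^{n-1}H(\rho;0)\,d\rho\le H(2r;0)\,\frac{(2r)^n}{n}=\frac{2r}{n}\int_{\partial B_{2r}(0)}u^2+v^2,\]
which forces $\int_{\partial B_{2r}(0)}u^2+v^2\ge \tfrac1C r^{n+1}$. On $\partial B_{2r}(0)$, \eqref{linear growth} gives $u+v\le Cr$ and hence $u^2+v^2\le Cr(u+v)$ (using $u,v\ge0$), so
\[\int_{\partial B_{2r}(0)}u+v\ \ge\ \frac1{Cr}\int_{\partial B_{2r}(0)}u^2+v^2\ \ge\ \frac1C r^n.\]
Writing $R=2r$ this is exactly the asserted estimate for $R>4$.

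Every step is elementary once the quoted results are available, so there is no genuinely hard point; the one place requiring care is the deconvolution in the second step, where one must observe that the almost–monotonicity of $J$ controls only the \emph{product} of the two weighted energies and must be combined with the linear growth ceiling on each factor to obtain separate lower bounds for $\int_{B_{2r}}u^2$ and $\int_{B_{2r}}v^2$.
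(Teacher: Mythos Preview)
Your proof is correct, but it takes a somewhat different route from the paper's. The paper argues by contradiction: setting $\int_{\partial B_r(0)}(u+v)=\varepsilon r^n$, it uses the mean value inequality for the subharmonic functions $u,v$ to get $\sup_{B_{r/2}(0)}(u+v)\le C\varepsilon r$, plugs this sup bound into \eqref{5.9} for both $u$ and $v$ to obtain $J(r/4)\le C\varepsilon^4$ (written as $C\varepsilon^2$ in the paper, but either exponent works), and then the lower bound on $J$ from Proposition~\ref{coro 5.3} forces $\varepsilon\ge 1/C$. This is a three-line argument once you see the subharmonic mean value step.

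Your approach instead runs forward: from the lower bound on $J$ you extract a lower bound on the \emph{product} $\big(\int_{B_{2r}}u^2\big)\big(\int_{B_{2r}}v^2\big)$, then deconvolve using the linear growth ceiling on each factor, pass from ball to sphere via the monotonicity of $H$ in \eqref{3.2}, and finally convert $L^2$ to $L^1$ on the sphere using linear growth once more. This is a bit longer and requires two extra ingredients (the deconvolution step and the monotonicity of $H$) that the paper's sup-based argument bypasses, but it has the virtue of avoiding the mean value inequality and making each passage explicit. Both approaches rest on the same two pillars, Proposition~\ref{coro 5.3} and \eqref{5.9}.
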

\begin{proof}
Take an $r>4$ and define $\varepsilon$ so that it satisfies
\[\int_{\partial B_r(0)}u+v=\varepsilon r^n.\]
Because $u$ and $v$ are subharmonic,
\[\sup_{B_{r/2}(0)}\left(u+v\right)\leq C\varepsilon r.\]
Using \eqref{5.9} we get
\[J(r/4)\leq C\varepsilon^2.\]
By Proposition \ref{coro 5.3}, we get $\varepsilon\geq 1/C$,
where $C$ is a constant independent of $r$.
\end{proof}

\begin{rmk}\label{rmk 5.5}
Combining this with the results in Section 3, we know for every
$R_j\to+\infty$, up to a subsequence of $j$, there exists a vector
$e$ such that on any compact set of $\mathbb{R}^n$, there is the
uniform convergence
\[u_j(x)=\frac{1}{R_j}u(R_jx)\to(e\cdot x)^+,\ \ \ v_j(x)=\frac{1}{R_j}v(R_jx)\to(e\cdot x)^-.\]
Note that $J$ is invariant under such a scaling. By Proposition
\ref{prop 4.3},
\begin{eqnarray*}
\lim\limits_{j\to+\infty}J(R_j)&=&\lim\limits_{j\to+\infty}\int_{B_1(0)}\frac{|\nabla
u_j(y)|^2+R_j^4u_j(y)^2v_j(y)^2}{|y|^{n-2}}\int_{B_1(0)}\frac{|\nabla
v_j(y)|^2+R_j^4u_j(y)^2v_j(y)^2}{|y|^{n-2}}\\
&=&c(n)|e|^2.
\end{eqnarray*}
 After a scaling of the form
$(u(x),v(x))\rightarrow(\lambda u(\lambda x),\lambda v(\lambda x))$
for some $\lambda>0$, we can assume $|e|=1$. However, at this stage
we do not know whether such $e$ is unique. It may depend on the
sequence $R_j$.
\end{rmk}

\begin{lem}\label{lem 5.7}
For a solution $(u,v)$ of \eqref{equation} satisfying \eqref{linear
growth}, there exists a universal constant $C$ such that for every $x\in\{
u=v\}$,\[\int_{B_1(x)}u^2\geq C~~\text{and}~~\int_{B_1(x)}v^2\geq
C.\]
\end{lem}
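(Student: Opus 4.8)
The plan is to argue by contradiction and compactness. By the symmetry $u\leftrightarrow v$ of \eqref{equation} it suffices to prove $\int_{B_1(x)}u^2\ge C$ for every $x\in\{u=v\}$. Suppose this fails, so there are points $x_k\in\{u=v\}$ with $\int_{B_1(x_k)}u^2\to0$. Since $u^2$ is subharmonic, the mean value inequality gives $\sup_{B_{1/2}(x_k)}u^2\le C\int_{B_1(x_k)}u^2\to0$; in particular $\varepsilon_k:=u(x_k)=v(x_k)\to0$. If $(x_k)$ had a bounded subsequence, then along it $x_k\to x_\infty$ and $\int_{B_1(x_\infty)}u^2=0$, forcing $u\equiv0$, a contradiction. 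Hence $|x_k|\to+\infty$.

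The first key step is to rescale near $x_k$ by a factor bounded below independently of $k$. Set $\ell_k:=H(1;x_k)^{1/2}=\big(\int_{\partial B_1(x_k)}u^2+v^2\big)^{1/2}$. I would show $\ell_k\ge c_0>0$ using that the nondegeneracy estimate of Corollary \ref{coro 5.4} (and Proposition \ref{coro 5.3}) holds at every base point $x\in\{u=v\}$ with a constant independent of $x$ — legitimate because the blowing-down analysis of Section 3 and Proposition \ref{prop 4.3} may be carried out at an arbitrary base point. Indeed, $\int_{\partial B_5(x_k)}(u+v)\ge 5^n/C$ gives $H(5;x_k)\ge1/C$ by Cauchy--Schwarz, and since $N(r;x_k)\le1$ for all $r$ by Corollary \ref{coro 4.3}, the doubling Proposition \ref{doubling property} gives $H(5;x_k)\le e\,(5/r_1)^2 H(r_1;x_k)$ for every $r_1\in(1,5)$; letting $r_1\to1^+$ and using continuity of $r\mapsto H(r;x_k)$ yields $H(1;x_k)\ge c_0:=1/(25eC)$.

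Now let $\bar u_k(y):=\ell_k^{-1}u(x_k+y)$ and $\bar v_k(y):=\ell_k^{-1}v(x_k+y)$, a solution of \eqref{equation scaled} with $\kappa_k=\ell_k^2\ge c_0$, normalized so that $\int_{\partial B_1(0)}\bar u_k^2+\bar v_k^2=1$; moreover $\int_{B_1(0)}\bar u_k^2=\ell_k^{-2}\int_{B_1(x_k)}u^2\le c_0^{-1}\int_{B_1(x_k)}u^2\to0$ and $\bar v_k(0)=\varepsilon_k/\ell_k\to0$. Since the Almgren quotient is scale invariant, $N_{\kappa_k}(r;0,\bar u_k,\bar v_k)=N(r;x_k)\le1$ for all $r$, so the doubling property applies to $(\bar u_k,\bar v_k)$; combined with the monotonicity \eqref{3.2} of $H$ this gives $\int_{\partial B_r(0)}\bar u_k^2+\bar v_k^2\le C(r^{n-1}+r^{n+1})$ for all $r>0$, hence a uniform $L^2_{\mathrm{loc}}$ bound and, via subharmonicity and the mean value inequality, a uniform $L^\infty_{\mathrm{loc}}$ bound. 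By Theorem \ref{lem uniform Holder} (when $\kappa_k\to\infty$ along a subsequence) or by interior elliptic estimates (when $\kappa_k$ stays bounded), $\bar u_k,\bar v_k$ are uniformly bounded in $C^\alpha_{\mathrm{loc}}$, so a subsequence converges uniformly on $\overline{B_1(0)}$ to $(\bar u_\infty,\bar v_\infty)$; by \cite{DWZ2011,TT2011} (as in the proof of Theorem \ref{lem 2.3}) either $(\bar u_\infty,\bar v_\infty)$ solves \eqref{equation scaled} with a finite $\kappa$, or $\bar u_\infty\bar v_\infty\equiv0$ and $\bar u_\infty-\bar v_\infty$ is harmonic. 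In either case $\int_{B_1(0)}\bar u_\infty^2=0$ forces $\bar u_\infty\equiv0$ on $B_1(0)$, which makes $\bar v_\infty$ harmonic there; being nonnegative with $\bar v_\infty(0)=0$, the strong maximum principle gives $\bar v_\infty\equiv0$ on $B_1(0)$. Then $1=\int_{\partial B_1(0)}\bar u_k^2+\bar v_k^2\to\int_{\partial B_1(0)}\bar u_\infty^2+\bar v_\infty^2=0$, the desired contradiction.

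The main obstacle is precisely the compactness step: because $u,v$ grow linearly and $|x_k|\to\infty$, the translates $u(x_k+\cdot),v(x_k+\cdot)$ need not be uniformly bounded, so one is forced to rescale, and the only normalizing factor available a priori that is bounded below is $H(1;x_k)^{1/2}$. Thus the heart of the matter is the uniform-in-$x$ nondegeneracy on $\{u=v\}$ (Corollary \ref{coro 5.4}) together with the doubling property, used to transfer that bound from a large radius down to radius $1$; once the rescaled pair has a nontrivial limit, the conclusion is merely the rigidity of nonnegative harmonic functions vanishing at an interior point.
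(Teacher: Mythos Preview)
Your compactness argument is sound once you have the lower bound $\ell_k=H(1;x_k)^{1/2}\ge c_0$, but your justification of that bound is circular. You invoke Corollary \ref{coro 5.4} ``at every base point $x\in\{u=v\}$ with a constant independent of $x$,'' arguing that the blowing-down analysis can be carried out at any base point. It can, but the \emph{constants} it produces depend on the base point: the ACF monotonicity (Theorem \ref{thm ACF monotonicity}) at $x$ requires a fixed $\lambda$ with $\lambda^{-1}\le \int_{\partial B_r(x)}u^2/\int_{\partial B_r(x)}v^2\le\lambda$ for all $r\ge1$, and the lower bound in Proposition \ref{coro 5.3} at $x$ depends on $J(1;x)$. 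The uniformity of these quantities over $\{u=v\}$ is exactly the content of Corollaries \ref{coro 5.8} and \ref{coro 5.9}, which are proved \emph{after} Lemma \ref{lem 5.7} and rely on it.

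The paper avoids this circularity by using nondegeneracy only at the \emph{origin}. It first shows (via a compactness argument where the normalizing factor $L_i$ is bounded below by the contradiction hypothesis $\int_{B_1(x_i)}v^2\ge\delta^2$, not by any uniform ACF) that $\int_{B_1(x_i)}u^2\to0$ forces $\int_{B_1(x_i)}v^2\to0$. Then, for $r\gg|x_i|$, Corollary \ref{coro 5.4} at the origin and the doubling property at $x_i$ (which needs only $N(r;x_i)\le1$, i.e.\ Corollary \ref{coro 4.3}) give
\[
cr^{n+2}\le\int_{B_r(0)}u^2+v^2\le\int_{B_{r+|x_i|}(x_i)}u^2+v^2\le C r^{n+2}\int_{B_1(x_i)}u^2+v^2,
\]
a contradiction. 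Incidentally, this same chain (read as a lower bound on $\int_{B_1(x_i)}u^2+v^2$, hence on $H(1;x_i)$) would also repair your argument: it gives $\ell_k\ge c_0$ using only the origin-based estimate, and then your compactness step goes through unchanged.
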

\begin{proof}
Assume by the contrary, there exists a sequence of $x_i\in\{u=v\}$
such that
\[\lim\limits_{i\to+\infty}\int_{B_1(x_i)}u^2=0.\]
We claim that
\[\lim\limits_{i\to+\infty}\int_{B_1(x_i)}v^2=0.\]
Otherwise there exists a $\delta>0$ such that
\[\lim\limits_{i\to+\infty}\int_{B_1(x_i)}v^2\geq\delta^2.\]
Consider
\[u_i(x)=\frac{1}{L_i}u(x_i+x),\ \ \ \ v_i(x)=\frac{1}{L_i}v(x_i+x)\]
where $L_i\geq \delta$ is chosen so that
\begin{equation}\label{5.01}
\int_{B_1(0)}u_i^2+v_i^2=1.
\end{equation}
$(u_i,v_i)$ satisfies \eqref{equation scaled} with
$\kappa=L_i^2\geq\delta^2$. By scaling the doubling property
Proposition \ref{doubling property}, we obtain
\[\int_{B_2(0)}u_i^2+v_i^2\leq 64\cdot 2^n.\]
Exactly as in Section 3, we know $u_i$ and $v_i$ are uniformly
bounded and uniformly $1/2$-H\"{o}lder continuous in $B_{3/2}(0)$.
Assume their limits (of a subsequence) are $u_\infty$ and
$v_{\infty}$. Then
\[\int_{B_1(0)}u_\infty^2=0,\ \ \ \ \int_{B_1(0)}v_\infty^2=1.\]
So $u_\infty\equiv 0$.

Since $x_i\in\{u=v\}$,
\[u_\infty(0)-v_\infty(0)=\lim\limits_{i\to+\infty}u_i(0)-v_i(0)=0.\]
Hence $v_\infty(0)=0$. Because $v_\infty$ is a nonnegative harmonic
function, \footnote{There are two cases: if $L_i$ are uniformly
bounded, this comes directly from the second equation in
\eqref{equation scaled} and the fact that $u_\infty\equiv 0$, and if
$L_i\to+\infty$, this can be deduced from Theorem \ref{lem 2.3}.} by
the strong maximum principle, $v_\infty\equiv 0$. This contradicts
\eqref{5.01} and we prove the claim.
\par
Now we use Corollary \ref{coro 5.4} and Proposition \ref{doubling
property} to get, for $r\gg|x_i|$
\begin{eqnarray*}
Cr^{n+2}&\leq&\int_{B_r(0)}u^2+v^2\\
&\leq&\int_{B_{r+|x_i|}(x_i)}u^2+v^2\\
&\leq&Cr^{n+2}\int_{B_1(x_i)}u^2+v^2=o(r^{n+2}).
\end{eqnarray*}
Thus for $i$ large we get a contradiction. That is, our assumption
at the beginning of the proof is not true.
\end{proof}
Repeating the proof of Theorem \ref{thm ACF monotonicity}
we get
\begin{coro}\label{coro 5.8}
For a solution $(u,v)$ of \eqref{equation} satisfying \eqref{linear
growth}, there is a universal constant $C$, such that for every $x\in\{u=v\}$
and $r>1$, $e^{-Cr^{-1/2}}J(r;x)$ is nondecreasing in $r$.
\end{coro}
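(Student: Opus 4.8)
The plan is to repeat the proof of Theorem \ref{thm ACF monotonicity} with the base point $x\in\{u=v\}$ replacing the origin; the only genuinely new input will be a pair of uniform (in $x$) nondegeneracy estimates supplied by Lemma \ref{lem 5.7}. Since the computation behind the differential inequality \eqref{5.1} is local and translation invariant (see \cite{NTTV}), that inequality continues to hold with the origin replaced by $x$, i.e.\ with $\bar u(\theta)=u(x+r\theta)$ and $\bar v(\theta)=v(x+r\theta)$; moreover, as $u,v>0$, the mean value inequality gives $\int_{\mathbb{S}^{n-1}}\bar u\ge u(x)>0$ and $\int_{\mathbb{S}^{n-1}}\bar v\ge v(x)>0$, so the two Rayleigh quotients in \eqref{5.1} are well defined. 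Normalizing by $\bar u_\kappa:=\bar u/\|\bar u\|_{L^2(\mathbb{S}^{n-1})}$ and $\bar v_\kappa:=\bar v/\|\bar u\|_{L^2(\mathbb{S}^{n-1})}$, those quotients become exactly the ones handled by the lemma preceding Theorem \ref{thm ACF monotonicity}, with parameter $\kappa=r^2\|\bar u\|_{L^2(\mathbb{S}^{n-1})}^2=r^{3-n}\int_{\partial B_r(x)}u^2$ and mass ratio $\lambda_\kappa^2=\big(\int_{\partial B_r(x)}v^2\big)\big/\big(\int_{\partial B_r(x)}u^2\big)$. Hence everything reduces to proving, with universal constants and for all $x\in\{u=v\}$, $r>1$, that (i) $C^{-1}\le \big(\int_{\partial B_r(x)}u^2\big)\big/\big(\int_{\partial B_r(x)}v^2\big)\le C$, and (ii) $r^{1-n}\int_{\partial B_r(x)}u^2\ge C^{-1}$ (so that $\kappa\ge r^2/C$).

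I would prove (i) and (ii) together by a blow-up contradiction, and this is the step that uses $x\in\{u=v\}$. If (i) or (ii) failed there would be $x_i\in\{u=v\}$, $r_i>1$ along which the ratio in (i) tends to $0$ or $\infty$, or $r_i^{1-n}\int_{\partial B_{r_i}(x_i)}u^2\to0$. Set $L_i^2:=r_i^{1-n}\int_{\partial B_{r_i}(x_i)}(u^2+v^2)=H(r_i;x_i)$ and rescale, $u_i(y):=L_i^{-1}u(x_i+r_iy)$, $v_i(y):=L_i^{-1}v(x_i+r_iy)$, so that $\int_{\partial B_1(0)}(u_i^2+v_i^2)=1$ and $(u_i,v_i)$ solves \eqref{equation scaled} with $\kappa_i=L_i^2r_i^2$. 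Because $u^2+v^2$ is subharmonic, $\rho\mapsto\int_{\partial B_\rho(x)}(u^2+v^2)$ is nondecreasing and $H$ is nondecreasing by \eqref{3.2}, whence $L_i^2=H(r_i;x_i)\ge H(1;x_i)\ge\int_{B_1(x_i)}u^2\ge C^{-1}$ by Lemma \ref{lem 5.7}; so $L_i$ is bounded below, and in each degenerate scenario one of $\int_{\partial B_1(0)}u_i^2$, $\int_{\partial B_1(0)}v_i^2$ tends to $0$ — by the symmetry $(u,v)\leftrightarrow(v,u)$, say $\int_{\partial B_1(0)}v_i^2\to0$. By Corollary \ref{coro 4.3}, $N_{\kappa_i}(\rho;0,u_i,v_i)=N(\rho r_i;x_i,u,v)\le1$, so Proposition \ref{doubling property} gives a universal bound on $\int_{\partial B_2(0)}(u_i^2+v_i^2)$; subharmonicity and Theorem \ref{lem uniform Holder} then furnish uniform H\"older bounds and a limit $(u_\infty,v_\infty)$, uniform on $\overline{B_1(0)}$, with $\int_{\partial B_1(0)}u_\infty^2=1$, $\int_{\partial B_1(0)}v_\infty^2=0$, and $u_\infty(0)=v_\infty(0)$ (since $u_i(0)=v_i(0)$). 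As $v_\infty\ge0$ is subharmonic in $B_1(0)$ with zero boundary values, $v_\infty\equiv0$ there; then $u_\infty\ge0$ is harmonic in $B_1(0)$ (if $\kappa_i\to\infty$, because $u_\infty-v_\infty$ is harmonic and $u_\infty v_\infty\equiv0$ by \cite{DWZ2011,TT2011}, as in the proof of Theorem \ref{lem 2.3}; if $\kappa_i$ stays bounded, because $\Delta u_\infty=\kappa_\infty u_\infty v_\infty^2=0$) and vanishes at $0$, hence $u_\infty\equiv0$ by the strong maximum principle — contradicting $\int_{\partial B_1(0)}u_\infty^2=1$.

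With (i) and (ii) in hand, the lemma preceding Theorem \ref{thm ACF monotonicity} applies with a universal $\lambda$ and with $\kappa\ge r^2/C\ge1$ once $r\ge\sqrt C$, giving $\gamma(\,\cdot\,)+\gamma(\,\cdot\,)\ge2-C\kappa^{-1/4}\ge2-Cr^{-1/2}$ there, so $\tfrac{d}{dr}\log J(r;x)\ge-Cr^{-3/2}$ for $r\ge\sqrt C$; on the bounded range $1<r<\sqrt C$ the crude bound $\tfrac{d}{dr}\log J(r;x)\ge-4/r$ (from $\gamma\ge0$) suffices. Enlarging $C$ to cover both regimes shows $\log J(r;x)-Cr^{-1/2}$ is nondecreasing on $(1,\infty)$, i.e.\ $e^{-Cr^{-1/2}}J(r;x)$ is nondecreasing, with $C$ depending only on $n$ and the constant of Lemma \ref{lem 5.7}, hence independent of $x$ and $r$. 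The main obstacle is the uniform blow-up of the previous paragraph: one must organize it so that the doubling bound, the H\"older estimate, and the final contradiction are all genuinely independent of both $x$ and $r$. The conceptual point is that Lemma \ref{lem 5.7} supplies precisely the uniform-in-$x$ nondegeneracy on $\{u=v\}$ which, at the origin, came for free from Proposition \ref{coro 5.3} and continuity.
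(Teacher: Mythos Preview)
Your proof is correct and follows essentially the same strategy as the paper's: reduce to the key lemma preceding Theorem \ref{thm ACF monotonicity} by showing that the $L^2$ mass ratio $\int_{\partial B_r(x)}u^2\big/\int_{\partial B_r(x)}v^2$ is bounded uniformly in $x\in\{u=v\}$ and $r\ge1$, via a blow-up contradiction using Lemma \ref{lem 5.7} (just as in the paper). You are in fact more careful than the paper on one point: you explicitly verify the uniform lower bound (ii) on $r^{1-n}\int_{\partial B_r(x)}u^2$, which is needed to convert the lemma's $C\kappa^{-1/4}$ into $Cr^{-1/2}$ with a universal constant; the paper leaves this implicit (it follows from (i) together with Lemma \ref{lem 5.7} and the monotonicity of $H$, exactly as you argue).
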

\begin{proof}
We only need to show that, there exists a constant $\lambda\geq 1$,
such that for all $x\in\{u=v\}$ and $r\geq1$,
\[\frac{1}{\lambda}\leq\frac{\int_{\partial B_r(x)}u^2}{\int_{\partial B_r(x)}v^2}\leq\lambda.\]
Assume by the contrary, there exist $x_i\in\{u=v\}$ and $r_i\geq1$
such that
\[\lim\limits_{i\to+\infty}\frac{\int_{\partial B_{r_i}(x_i)}u^2}{\int_{\partial B_{r_i}(x_i)}v^2}=0.\]
Define
\[u_i(x)=\frac{1}{L_i}u(x_i+r_ix),\ \ \ \ v_i(x)=\frac{1}{L_i}v(x_i+r_ix)\]
where $L_i$ is chosen so that
\[\int_{B_1(0)}u_i^2+v_i^2=1.\]
Then we can get a contradiction exactly as in the proof of the
previous lemma.
\end{proof}

\begin{coro}\label{coro 5.9}
For a solution $(u,v)$ of \eqref{equation} satisfying \eqref{linear
growth}, there exists a constant $C$ such that for every $x\in\{u=v\}$ and
$r\geq 1$,
\[\frac{1}{C}\leq J(r;x)\leq C.\]
\end{coro}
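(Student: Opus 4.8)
The plan is to prove the upper and lower bounds separately, reducing each — by means of the almost-monotonicity of $J(r;x)$ in Corollary~\ref{coro 5.8} — to an estimate at one controlled scale.

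For the upper bound I would first note that the estimate \eqref{5.9} is valid verbatim with an arbitrary base point $x$ in place of the origin, since its proof only used that $y\mapsto|x-y|^{2-n}$ is superharmonic and that $u$ is subharmonic. This gives
\[J(r;x)\le Cr^{-4-2n}\Big(\int_{B_{2r}(x)}u^2\Big)\Big(\int_{B_{2r}(x)}v^2\Big),\]
and by the linear growth \eqref{linear growth} each of the two integrals is bounded by $Cr^{n}(1+|x|+r)^2$, so $J(r;x)\le Cr^{-4}(1+|x|+r)^4$. When $r\ge 1+|x|$ this already gives $J(r;x)\le C$ with $C$ depending only on $n$ and the growth constant; and when $1\le r<1+|x|$, setting $R:=1+|x|$ and using $r\ge 1$ together with the monotonicity of $e^{-Cr^{-1/2}}J(r;x)$ from Corollary~\ref{coro 5.8}, we get $J(r;x)\le e^{C}J(R;x)\le C$. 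Hence $J(r;x)\le C$ for all $x\in\{u=v\}$ and $r\ge 1$.

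For the lower bound, Corollary~\ref{coro 5.8} is used in the other direction to reduce to a fixed radius: since $|x-y|^{2-n}\ge 1$ on $B_1(x)$ and $e^{-Cr^{-1/2}}J(r;x)\ge e^{-C}J(3/2;x)$ for $r\ge 3/2$ (with a direct estimate when $1\le r<3/2$), one obtains
\[J(r;x)\ge c\,\Big(\int_{B_1(x)}|\nabla u|^2+u^2v^2\Big)\Big(\int_{B_1(x)}|\nabla v|^2+u^2v^2\Big)\qquad(r\ge 1),\]
so it suffices to bound $\int_{B_1(x)}(|\nabla u|^2+u^2v^2)$ — and, by the symmetry $u\leftrightarrow v$, also $\int_{B_1(x)}(|\nabla v|^2+u^2v^2)$ — from below by a positive universal constant for $x\in\{u=v\}$. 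I would argue by contradiction: take $x_k\in\{u=v\}$ with, say, $\int_{B_1(x_k)}(|\nabla u|^2+u^2v^2)\to 0$. If $\{|x_k|\}$ is bounded, pass to a subsequence $x_k\to x_*$; then $\int_{B_1(x_*)}(|\nabla u|^2+u^2v^2)=0$, hence $u^2v^2\equiv 0$ on $B_1(x_*)$, contradicting $u,v>0$. If $|x_k|\to\infty$, I would rescale $(u,v)$ near $x_k$ at the correct scale, use the uniform H\"older estimate (Theorem~\ref{lem uniform Holder}), the doubling property (Proposition~\ref{doubling property}, together with Corollary~\ref{coro 4.3}) and the strong $H^1_{loc}$ convergence of the rescaled solutions, together with the classification $u_\infty v_\infty\equiv 0$ of the limits from \cite{DWZ2011,TT2011}, to conclude that in the limit one of $u_\infty,v_\infty$ is a positive constant on a ball while the other vanishes there; undoing the rescaling this contradicts Lemma~\ref{lem 5.7} (in the spirit of the last step of its proof, where Corollary~\ref{coro 5.4} and the doubling property are used to compare growth rates).

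The main obstacle is precisely the case $|x_k|\to\infty$: the normalization constant occurring in the rescaling can diverge, so one cannot simply blow up at unit scale, and the compactness argument has to be coupled with the blowing-down analysis of Section~3. After the global scaling that normalizes $|e|=1$ in Proposition~\ref{prop 4.3} and Remark~\ref{rmk 5.5}, a base point in $\{u=v\}$ sits, at scale $|x_k|$, on the hyperplane $\{e\cdot y=0\}$, so that near it both $u$ and $v$ are nondegenerate; the limiting integrand $|\nabla(e\cdot y)^{\pm}|^2$ is then the indicator of a half-ball, which forces the relevant limiting quantity to equal a fixed positive multiple of $\big(\int_{B_1(0)}|y|^{2-n}\,dy\big)^2$ and yields the contradiction. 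Once the fixed-scale lower bound $\int_{B_1(x)}(|\nabla u|^2+u^2v^2)\ge 1/C$ is in hand, Corollary~\ref{coro 5.8} propagates it to all $r\ge 1$, which finishes the proof.
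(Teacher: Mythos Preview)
Your overall architecture matches the paper's: the upper bound via \eqref{5.9} and the linear growth (then propagated down by Corollary~\ref{coro 5.8}), and the lower bound by reducing to $J(1;x)$ and arguing by contradiction with a normalized rescaling, are exactly what the paper does. Your case $|x_k|$ bounded is a pleasant shortcut (since $u,v>0$ one contradicts $u^2v^2\equiv0$ immediately), though the paper dispenses with it by always rescaling.

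The imprecision is in your handling of $|x_k|\to\infty$. The paper does \emph{not} couple the argument with the macroscopic blow-down at scale $|x_k|$; that scale is irrelevant for a unit-scale quantity. Instead it normalizes at unit scale, $u_i(y)=L_i^{-1}u(x_i+y)$ with $\int_{B_1}u_i^2+v_i^2=1$, notes that $L_i$ is bounded below by Lemma~\ref{lem 5.7}, and then splits according to whether $L_i$ stays bounded. When $L_i$ is bounded the limit solves \eqref{equation scaled} with a fixed $\kappa>0$ and your ``one component constant'' picture (leading, via $u_\infty(0)=v_\infty(0)$ and the strong maximum principle, to $u_\infty\equiv v_\infty\equiv0$) gives the contradiction. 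When $L_i\to\infty$ one invokes Theorem~\ref{lem 2.3}: the limit is $((e\cdot y)^+,(e\cdot y)^-)$, and the weighted integrals converge (the paper justifies this via \eqref{5.9} and uniform H\"older continuity near the origin), so the rescaled product is positive; since $J(1;x_i)=L_i^4$ times this rescaled product, one contradicts $J(1;x_i)\to0$. In your factor-wise formulation the second case is even shorter: strong $H^1_{\text{loc}}$ convergence gives $\int_{B_1}|\nabla u_i|^2\to\int_{B_1}|\nabla(e\cdot y)^+|^2>0$, while the assumption forces $L_i^{-2}\int_{B_1(x_i)}(|\nabla u|^2+u^2v^2)\to0$. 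Your second paragraph (``at scale $|x_k|$, the base point sits on $\{e\cdot y=0\}$\ldots'') is not the mechanism; the uniqueness of the blow-down direction is neither available at this stage nor needed.
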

\begin{proof}
Assume $\sup_{B_r(0)}\left(u+v\right)\leq Cr$. Then for any fixed $x$, if $r$ is
large (depending on $|x|$),
\[\sup_{B_r(x)}\left(u+v\right)\leq 2Cr.\]
Combining this with \eqref{5.9}, we can obtain an upper bound of
$J(r;x)$. By the almost monotonicity of $J(r;x)$ (i.e. Corollary
\ref{coro 5.8}), this gives an upper bound of $J(r;x)$ for all
$x\in\{u=v\}$ and $r\geq 1$.
\par
Concerning the lower bound,  by the almost monotonicity of $J(r;x)$,
we only need to consider $J(1;x)$. Assume by the contrary, there
exist $x_i\in\{u=v\}$ such that
$\lim\limits_{i\to+\infty}J(1;x_i)=0$. Define $(u_i,v_i)$ as in the
proof of Lemma \ref{lem 5.7}. In particular, $(u_i,v_i)$ satisfies
the normalized condition \eqref{5.01}. Then
\[J(1;x_i)=L_i^4\int_{B_1(0)}\frac{|\nabla
u_i|^2+L_i^2u_i^2v_i^2}{|y|^{n-2}}dy \int_{B_1(0)}\frac{|\nabla
v_i|^2+L_i^2u_i^2v_i^2}{|y|^{n-2}}dy.\] Note that $L_i^2$ is the
parameter in the equations of $u_i$ and $v_i$, and by Lemma \ref{lem 5.7} it has a uniform positive lower bound.
\par
If $L_i$ are bounded, after passing to a subsequence, we can assume
$\lim\limits_{i\to+\infty}L_i=L_\infty>0$. Then by the proof of Lemma
\ref{lem 5.7}, $(u_i,v_i)$ converges to $(u_\infty,v_\infty)$
uniformly on any compact set of $\mathbb{R}^n$.
$(u_\infty,v_\infty)$ satisfies \eqref{equation scaled} with
$\kappa=L_\infty^2$. By passing to the limit in $J(1;x_i)$ we get
\[\int_{B_1(0)}\frac{|\nabla
u_\infty|^2+L_\infty^2u_\infty^2v_\infty^2}{|y|^{n-2}}dy\int_{B_1(0)}\frac{|\nabla
v_\infty|^2+L_\infty^2u_\infty^2v_\infty^2}{|y|^{n-2}}dy=0.\]
Without loss of generality we can assume the first integral is $0$.
 Hence
$u_\infty$ is a constant function. Moreover, if $v_\infty\neq 0$,
$u_\infty\equiv 0$. Using the equations of $u_\infty$ and $v_\infty$
and noting that $u_\infty(0)=v_\infty(0)$, we see both cases imply
$u_\infty\equiv v_\infty\equiv0$. This contradicts the normalization
condition \eqref{5.01}.
\par
If $L_i$ are unbounded, by Theorem \ref{lem 2.3}, there exists a
unit vector $e$ such that $(u_i,v_i)$ converges to
$(u_\infty,v_\infty)=((e\cdot x)^+,(e\cdot x)^-)$ uniformly on any
compact set of $\mathbb{R}^n$.

In this case, we still have the convergence
\begin{equation}\label{4.1}
\lim\limits_{\kappa\to+\infty}\int_{B_1(0)}\frac{|\nabla
u_i|^2+L_i^2u_i^2v_i^2}{|y|^{n-2}}dy=\int_{B_1(0)}\frac{|\nabla
u_\infty|^2}{|y|^{n-2}}dy.
\end{equation}
This is because, for every $\varepsilon>0$, by the strong
convergence of $u_i$ in $H^1_{\text{loc}}$ (similar to the proof of Proposition \ref{prop 4.3}),
\[\lim\limits_{\kappa\to+\infty}\int_{B_1(0)\setminus B_\varepsilon(0)}\frac{|\nabla
u_i|^2+L_i^2u_i^2v_i^2}{|y|^{n-2}}dy=\int_{B_1(0)\setminus
B_\varepsilon(0)}\frac{|\nabla u_\infty|^2}{|y|^{n-2}}dy,\] and by
\eqref{5.9} and the uniform H\"{o}lder continuity of $u_i$, as
$i\to+\infty$ and $\varepsilon\to 0$,
\[\int_{B_\varepsilon(0)}\frac{|\nabla
u_i|^2+L_i^2u_i^2v_i^2}{|y|^{n-2}}dy\leq C\varepsilon^{-n}\int_{
B_{2\varepsilon}(0)}u_i^2\leq C(u_i(0)^2+\varepsilon^{1/2})\to 0.\]

\eqref{4.1} allows us to pass to the limit in $J(1;x_i)$ to get
\[\int_{B_1(0)}\frac{|\nabla
u_\infty|^2}{|y|^{n-2}}dy\int_{B_1(0)}\frac{|\nabla
v_\infty|^2}{|y|^{n-2}}dy=0.\] Similar to the first case we get
$u_\infty\equiv v_\infty\equiv0$, a contradiction once again. So
there must exist a constant $C$ such that for all $x\in\{u=v\}$,
\[J(1;x)\geq C.\qedhere\]
\end{proof}
Note that in the above proof, if $L_i\to+\infty$, the limit is
$(u_\infty,v_\infty)=((e\cdot x)^+,(e\cdot x)^-)$ with a unit vector
$e$. In this case we have
\[J(1;u_\infty,v_\infty)=\lim\limits_{i\to+\infty}J(1;u_i,v_i)>0,\]
while by the upper bound on $J(r;x_i)$ we have
\[J(1;x_i)=L_i^4J(1;u_i,v_i)\leq C.\]
This is a contradiction. So $L_i$ must be uniformly bounded.
Combining this with Proposition \ref{doubling property}, we get
\begin{coro}\label{coro 5.10}
For a solution $(u,v)$ of \eqref{equation} satisfying \eqref{linear
growth}, there exists a constant $C$ such that for every $x\in\{u=v\}$,
\[\int_{\partial B_1(x)}u^2+v^2\leq C.\]
Consequently, for every $R>0$
\[\sup_{B_R(x)}\left(u+v\right)\leq C(1+R).\]
\end{coro}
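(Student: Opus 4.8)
The plan is to reduce the statement to the single uniform bound
\[
\int_{B_1(x)} u^2 + v^2 \le C \qquad \text{for every } x\in\{u=v\},
\]
with $C$ universal; everything else then follows from the doubling property (Proposition~\ref{doubling property}), the monotonicity of $H(\,\cdot\,;x)$ coming from \eqref{3.2}, and the subharmonicity of $u$ and $v$. I would establish this $L^2$-bound by contradiction. If it failed, there would be points $x_i\in\{u=v\}$ with $L_i^2:=\int_{B_1(x_i)}u^2+v^2\to+\infty$. Rescaling exactly as in the proof of Lemma~\ref{lem 5.7}, I set $u_i(y)=L_i^{-1}u(x_i+y)$ and $v_i(y)=L_i^{-1}v(x_i+y)$, so that $\int_{B_1(0)}u_i^2+v_i^2=1$, the pair $(u_i,v_i)$ solves \eqref{equation scaled} with $\kappa=L_i^2\to+\infty$, $u_i(0)=v_i(0)$, and, by Corollary~\ref{coro 4.3}, $N_\kappa(r;0,u_i,v_i)=N(r;x_i)\le 1$ for all $r>0$.

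At this point I would run the same analysis as in the ``$L_i$ unbounded'' case of the proof of Corollary~\ref{coro 5.9}: after the harmless renormalization matching the hypotheses of Theorem~\ref{lem 2.3}, a subsequence of $(u_i,v_i)$ converges, uniformly on compact sets and strongly in $H^1_{\mathrm{loc}}$, to $((e\cdot x)^+,(e\cdot x)^-)$ for some unit vector $e$, with $\kappa u_i^2v_i^2\to 0$ in $L^1_{\mathrm{loc}}$. Handling the singularity of the weight $|y|^{2-n}$ at the origin exactly as in \eqref{4.1} (using \eqref{5.9} and the uniform H\"older bound of Theorem~\ref{lem uniform Holder}), I can pass to the limit in $J$ and obtain
\[
J(1;u_i,v_i)\longrightarrow J\bigl(1;(e\cdot x)^+,(e\cdot x)^-\bigr) > 0 .
\]
Since $J$ scales as $J(1;x_i)=L_i^4\,J(1;u_i,v_i)$ while $J(1;x_i)\le C$ by Corollary~\ref{coro 5.9}, it follows that $L_i^4\le C/J(1;u_i,v_i)$ stays bounded, contradicting $L_i\to+\infty$. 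This is the mechanism already indicated in the remark preceding the statement, and it is the only place where both the two-sided control of $J$ from Corollary~\ref{coro 5.9} and the classification of singular limits from Theorem~\ref{lem 2.3} are used.

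Granting the $L^2$-bound, the rest is routine. Scaling the doubling property as in the proof of Lemma~\ref{lem 5.7} upgrades it to $\int_{B_2(x)}u^2+v^2\le C$, and since $H(r;x)=r^{1-n}\int_{\partial B_r(x)}u^2+v^2$ is nondecreasing in $r$ by \eqref{3.2}, this gives $\int_{\partial B_1(x)}u^2+v^2 = H(1;x)\le C$ uniformly over $x\in\{u=v\}$. For the growth estimate, Corollary~\ref{coro 4.3} gives $N(r;x)\le 1$, so Proposition~\ref{doubling property} together with the continuity of $H(\,\cdot\,;x)$ yields $H(r;x)\le C r^2$ for all $r\ge 1$, hence $\int_{B_r(x)}u^2+v^2\le C r^{n+2}$ for $r\ge 1$ (and $\le C$ for $0<r\le 1$). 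Finally, since $u$ and $v$, and therefore $u^2$ and $v^2$, are subharmonic, the mean value inequality gives, for $y\in B_R(x)$ with $R\ge 1$,
\[
u(y)^2+v(y)^2 \le \frac{C}{R^n}\int_{B_{2R}(x)}u^2+v^2 \le C(1+R)^2 ,
\]
and for $R\le 1$ the bound follows directly from the $B_1(x)$ estimate. I expect the main obstacle to be the first step, and within it the non-degeneracy of the rescaled limit — that $(u_i,v_i)$ converges to $((e\cdot x)^+,(e\cdot x)^-)$ and not to zero, so that $J(1;u_i,v_i)$ stays bounded away from $0$ — which is exactly what the normalization $\int_{B_1(0)}u_i^2+v_i^2=1$ together with Lemma~\ref{lem 5.7} guarantees; the remaining steps are standard consequences of the monotonicity formulas and subharmonicity.
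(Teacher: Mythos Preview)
Your proposal is correct and follows essentially the same approach as the paper: the remark immediately preceding Corollary~\ref{coro 5.10} is the paper's proof, and it is exactly the contradiction argument you describe (normalize by $L_i$, apply Theorem~\ref{lem 2.3} to identify the limit, pass to the limit in $J$ as in \eqref{4.1}, and use the scaling $J(1;x_i)=L_i^4 J(1;u_i,v_i)$ against the upper bound from Corollary~\ref{coro 5.9}). Your additional details on passing from the $L^2$ ball bound to the $\partial B_1$ bound and then to the pointwise growth estimate just spell out what the paper summarizes by ``Combining this with Proposition~\ref{doubling property}''.
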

This result can be viewed as the converse of Lemma \ref{lem 5.7}.

 \section{The global Lipschitz bound}
\numberwithin{equation}{section}
 \setcounter{equation}{0}
In this section we prove
\begin{thm}\label{thm 6.1}
Let $(u,v)$ be a solution of \eqref{equation} satisfying the linear
growth condition \eqref{linear growth}, then there is a constant
$C>0$ such that
\[\sup_{\mathbb{R}^n}\left(|\nabla u|+|\nabla v|\right)\leq C.\]
\end{thm}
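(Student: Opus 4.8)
The plan is to argue by contradiction and compactness. Suppose the bound fails, so there are points $x_k\in\mathbb{R}^n$ with $M_k:=|\nabla u(x_k)|+|\nabla v(x_k)|\to+\infty$; set $d_k:=\mathrm{dist}(x_k,\{u=v\})$, where $\{u=v\}\neq\emptyset$ because a blowing down limit is $(c(n)x_1^+,c(n)x_1^-)$ with $c(n)>0$ (Proposition \ref{prop 4.3}), which forces $u-v$ to change sign. Passing to a subsequence, either the $d_k$ stay bounded or $d_k\to+\infty$. In the bounded case the estimate is immediate: choosing $z_k\in\{u=v\}$ with $|x_k-z_k|$ bounded, Corollary \ref{coro 5.10} gives $\sup_{B_2(x_k)}(u+v)\le C$, hence $|\Delta u|=uv^2\le C$ and $0\le u\le C$ on $B_2(x_k)$, and interior elliptic estimates bound $|\nabla u(x_k)|$, and likewise $|\nabla v(x_k)|$, contradicting $M_k\to\infty$. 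So I may assume $d_k\to+\infty$, and, since $B_{d_k}(x_k)$ is connected and disjoint from $\{u=v\}$, that $u>v$ on $B_{d_k}(x_k)$ (using the symmetry of \eqref{equation}).

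The crucial ingredient is a \emph{uniform large-scale flatness} statement: for each small $h>0$ there is $R_h$ such that for every $z\in\{u=v\}$ and $R\ge R_h$ the rescaling $\big(R^{-1}u(z+R\,\cdot),\,R^{-1}v(z+R\,\cdot)\big)$ is $Ch$-close on $B_1(0)$ to $\big(a(e\cdot y)^+,\,a(e\cdot y)^-\big)$ for some unit vector $e=e(z,R)$ and some $a=a(z,R)\in[1/C,C]$. I would obtain this from exactly the compactness argument behind Theorem \ref{lem 2.3}: the rescaled pair solves \eqref{equation scaled} with $\kappa$ of order $R^4\to+\infty$, has equal values at the origin (as $z\in\{u=v\}$), and has Almgren quotient $N(2;0,\cdot)=N(2R;z)\le1$ by Corollary \ref{coro 4.3}. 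The one further thing needed is the two-sided bound $\tfrac1C\le\int_{\partial B_1(0)}\big(R^{-1}u(z+R\cdot)\big)^2+\big(R^{-1}v(z+R\cdot)\big)^2\le C$, \emph{uniform in $z\in\{u=v\}$ and $R\ge1$}: the upper bound comes from Corollary \ref{coro 5.10} together with the doubling property (Proposition \ref{doubling property}), and the lower bound from $J(R;z)\ge1/C$ (Corollary \ref{coro 5.9}) combined with the estimate \eqref{5.9} centred at $z$ and, again, Corollary \ref{coro 5.10} (plus the comparability of $\int_{\partial B_1}$ and $\int_{B_1}$ coming from the monotonicity of $H$). After normalising by the bounded factor $a$, Theorem \ref{lem 2.3} applies and yields the flatness, the limit profile being linear by Proposition \ref{prop 4.3}.

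Granting the flatness, I finish as follows. Pick $z_k\in\{u=v\}$ with $|x_k-z_k|=d_k$, apply the flatness at $(z_k,2d_k)$ (legitimate once $2d_k\ge R_h$), and put $y_k=(x_k-z_k)/(2d_k)$, so $|y_k|=\tfrac12$. Since $u>v$ on $B_{d_k}(x_k)$, i.e. on the rescaled ball $B_{1/2}(y_k)$, the $Ch$-closeness forces $e\cdot y_k\ge\tfrac14$ for $h$ small, hence $e\cdot y\ge\tfrac18$ on $B_{1/8}(y_k)$; unscaling, $u\ge c\,d_k$ and $v\le Ch\,d_k$ on $B_{d_k/4}(x_k)$, while $u\le C\,d_k$ on $B_{d_k}(x_k)$. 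Now $\Delta v=vu^2\ge c^2d_k^2\,v$ and $0\le v\le Chd_k$ on $B_{d_k/4}(x_k)$, so Lemma \ref{estimate exponential decay} gives $\sup_{B_{d_k/8}(x_k)}v\le e^{-d_k}$ for $k$ large; interior estimates for $\Delta v=vu^2$ on $B_{10}(x_k)$ then give $|\nabla v(x_k)|\le Ce^{-d_k/2}\to0$. For $u$: on $B_{d_k/8}(x_k)$ the equation $\Delta u=uv^2$ has super-exponentially small right-hand side, so $u$ differs from its harmonic replacement $\varphi$ (with $\varphi=u$ on $\partial B_{d_k/8}(x_k)$) by at most $e^{-d_k/2}$ there; since then $0\le\varphi\le Cd_k$ on a ball of radius $d_k/8$, the interior gradient estimate for harmonic functions gives $|\nabla\varphi(x_k)|\le \tfrac{C}{d_k}\cdot Cd_k=C$, whence $|\nabla u(x_k)|\le C$. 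Therefore $M_k$ is bounded, a contradiction, and the theorem follows.

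The hard part is the uniform flatness claim — that is, running the compactness of Theorem \ref{lem 2.3} with constants independent of the base point $z\in\{u=v\}$. This is exactly what the uniform-in-$z$ consequences of the Alt--Caffarelli--Friedman monotonicity formula (Corollaries \ref{coro 5.9} and \ref{coro 5.10}, resting on Lemma \ref{lem 5.7}) were designed to provide, so essentially all of the previous section feeds into this single step. The rest is routine: interior elliptic estimates, the exponential-decay Lemma \ref{estimate exponential decay}, harmonic replacement, and the elementary observation that $L^\infty$-closeness of the rescaled pair to a linear profile pins down which side of, and the distance to, the nodal hyperplane (here using $u>v$ on $B_{1/2}(y_k)$ and the $Ch$-closeness of $u-v$ to $a\,e\cdot y$).
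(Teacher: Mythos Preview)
Your proof is correct and follows essentially the same route as the paper: split according to the distance from the point to $\{u=v\}$, handle the bounded-distance case via Corollary \ref{coro 5.10} and interior estimates, and handle the large-distance case via a uniform large-scale flatness statement (your ``crucial ingredient'' is exactly Lemma \ref{lem 6.2}) followed by exponential decay of the small component and a gradient estimate. The only cosmetic differences are that the paper argues directly rather than by contradiction, obtains the two-sided bound $L(R)/R\in[1/C,C]$ slightly differently (first $L(R)\ge c$ from Lemma \ref{lem 5.7} and the mean-value inequality, then $L(R)/R\sim 1$ from the $J$-bounds), and packages your harmonic-replacement endgame into a short lemma (Lemma \ref{large k}) using the oscillation-plus-Laplacian gradient estimate instead.
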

The proof uses three lemmas. First we show that $u-v$ can be well
approximated by a linear function with unit slope in $B_R(x)$, if
$x\in\{u=v\}$ and $R$ is large enough (uniformly with respect to
$x\in\{u=v\}$).
\begin{lem}\label{lem 6.2}
For every $h\in(0,1/10)$, there exists an $R_0>0$ such that, for every
$x_0\in\{u=v\}$ and $R\geq R_0$, there exists a vector $e$ and a
constant $C$ ($C$ independent of $h$),
\[\frac{1}{C}\leq|e|\leq C,\]
such that
\begin{equation}\label{well approximation}
\sup_{B_R(x_0)}\left(|u-\left(e\cdot (x-x_0)\right)^+|+|v-\left(e\cdot
(x-x_0)\right)^-|\right)\leq hR.
\end{equation}
\end{lem}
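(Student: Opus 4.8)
The plan is to reduce the statement, by a blow-down centered at $x_0$, to Theorem~\ref{lem 2.3}, which already provides exactly this sort of one-dimensional approximation at unit scale for the scaled problem~\eqref{equation scaled} with a large parameter. First I would fix $x_0\in\{u=v\}$ and $R>4$ and set
\[\bar u(x)=\frac1L\,u(x_0+Rx),\qquad \bar v(x)=\frac1L\,v(x_0+Rx),\qquad L^2:=\frac{1}{c_n}\,R^{1-n}\!\int_{\partial B_R(x_0)}u^2+v^2,\]
with $c_n=\int_{\partial B_1(0)}x_1^2$. Then $(\bar u,\bar v)$ solves~\eqref{equation scaled} with $\kappa=L^2R^2$, and it satisfies the three hypotheses of Theorem~\ref{lem 2.3}: $\bar u(0)=\bar v(0)$ because $x_0\in\{u=v\}$; $\int_{\partial B_1(0)}\bar u^2+\bar v^2=\int_{\partial B_1(0)}x_1^2$ by the choice of $L$; and $N(2;0,\bar u,\bar v)=N(2R;x_0,u,v)\le1$ by the scale invariance of the Almgren quotient together with Corollary~\ref{coro 4.3}. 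Hence Theorem~\ref{lem 2.3} applies as soon as $\kappa\ge K(h')$ for the relevant small parameter $h'$.

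The main point, and what I expect to be the real work, is to show that $\kappa=L^2R^2$ is comparable to $R^4$ uniformly in $x_0\in\{u=v\}$ and $R>4$; equivalently, that $H(R;x_0)$ is comparable to $R^2$. The upper bound $H(R;x_0)\le CR^2$ is immediate from Corollary~\ref{coro 5.10}. For the lower bound I would repeat the proof of Corollary~\ref{coro 5.4} verbatim, replacing Proposition~\ref{coro 5.3} by Corollary~\ref{coro 5.9}: writing $\int_{\partial B_r(x_0)}u+v=\varepsilon r^n$, subharmonicity gives $\sup_{B_{r/2}(x_0)}(u+v)\le C\varepsilon r$, estimate~\eqref{5.9} centered at $x_0$ gives $J(r/4;x_0)\le C\varepsilon^2$, and the uniform lower bound $J(r/4;x_0)\ge1/C$ from Corollary~\ref{coro 5.9} forces $\varepsilon\ge1/C$; a Cauchy--Schwarz inequality on $\partial B_r(x_0)$ then yields $H(r;x_0)\ge C^{-1}r^2$. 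Consequently $C^{-1}R^4\le\kappa\le CR^4$, so $\kappa\to+\infty$ as $R\to+\infty$, uniformly in $x_0\in\{u=v\}$, and the ratio $L/R$ lies in a fixed interval $[1/C',C']$ with $C'$ depending only on $(u,v)$.

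To conclude, given $h\in(0,1/10)$ I would apply Theorem~\ref{lem 2.3} with $h'=h/(2C')$ and take $R_0(h)\ge\max\{4,(CK(h'))^{1/4}\}$, so that $R\ge R_0(h)$ guarantees $\kappa\ge K(h')$. Theorem~\ref{lem 2.3} then produces a unit vector $e_0$ with $\sup_{B_1(0)}\bigl(|\bar u-(e_0\cdot x)^+|+|\bar v-(e_0\cdot x)^-|\bigr)\le h'$. Undoing the rescaling with $e:=\tfrac{L}{R}e_0$ gives $|e|=L/R\in[1/C',C']$ and, since $L=\tfrac LR R\le C'R$,
\[\sup_{B_R(x_0)}\Bigl(\bigl|u-(e\cdot(x-x_0))^+\bigr|+\bigl|v-(e\cdot(x-x_0))^-\bigr|\Bigr)\le 2Lh'\le 2C'h'R=hR,\]
which is exactly~\eqref{well approximation}, with $C'$ independent of $h$ as required.

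The only genuinely delicate step is the uniform-in-$x_0$ lower growth estimate $H(R;x_0)\ge C^{-1}R^2$, i.e. transferring Corollary~\ref{coro 5.4} to an arbitrary base point on $\{u=v\}$; this is precisely what the uniform almost-monotonicity of $J(\cdot\,;x_0)$ (Corollary~\ref{coro 5.8}) and the ensuing uniform two-sided bounds on $J(\cdot\,;x_0)$ (Corollary~\ref{coro 5.9}) were set up to provide. Everything else is soft compactness, already packaged in Theorem~\ref{lem 2.3}, together with bookkeeping of the scaling factors.
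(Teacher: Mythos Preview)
Your proposal is correct and follows the same overall scheme as the paper: blow down at $x_0$, verify the hypotheses of Theorem~\ref{lem 2.3}, and rescale back with $e=(L/R)e_0$. The one substantive difference is how the two-sided bound $L/R\in[1/C',C']$ (equivalently $|e|\in[1/C,C]$) is obtained.

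You establish this bound \emph{a priori}, before invoking Theorem~\ref{lem 2.3}: the upper bound comes from Corollary~\ref{coro 5.10}, and the lower bound comes from transferring the nondegeneracy argument of Corollary~\ref{coro 5.4} to the base point $x_0$ using the uniform bound $J(r;x_0)\geq 1/C$ of Corollary~\ref{coro 5.9}, followed by Cauchy--Schwarz on $\partial B_r(x_0)$. The paper instead only needs the crude lower bound $L(R)^2\geq C$ from Lemma~\ref{lem 5.7} to ensure $\kappa\to+\infty$, applies Theorem~\ref{lem 2.3} first, and then uses the resulting linear approximation itself to estimate $J(1;0,u_R,v_R)$ from below (by restricting the integrals to the region $\{e\cdot x\geq C(n)h\}$ where $|\nabla u_R-e|\leq 1/4$); comparing with the rescaled Corollary~\ref{coro 5.9} then yields $L(R)/R\in[1/C,C]$ \emph{a posteriori}. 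Your order of operations is arguably cleaner, since it decouples the size of $|e|$ from the output of Theorem~\ref{lem 2.3}; the paper's route has the mild advantage that it does not need Corollary~\ref{coro 5.10} as a separate input.
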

\begin{proof}
Consider
\[u_R(x)=L(R)^{-1}u(x_0+Rx),\ \ \ \ v_R(x)=L(R)^{-1}v(x_0+Rx),\]
where $L(R)$ is chosen so that
\[\int_{\partial B_1(0)}u_R^2+v_R^2=1.\]
By the mean value inequality for subharmonic functions and Lemma
\ref{lem 5.7},
\[L(R)^2=\frac{1}{|\partial B_R|}\int_{\partial B_R(x_0)}u^2+v^2\geq C.\]
Since $(u_R,v_R)$ satisfies \eqref{equation scaled} with parameter
$L(R)^2R^2$, if $R\geq R_0=\sqrt{K}$, $(u_R,v_R)$ satisfies the
assumptions of Theorem \ref{lem 2.3}. Then we get a unit vector $e$
such that
\[\sup_{B_1(0)}\left(|u_R-(e\cdot x)^+|+|v_R-(e\cdot x)^-|\right)\leq h.\]
We claim that there exists a constant $C$ independent of $h$, such
that
\begin{eqnarray*}
&&J(1;0,u_R,v_R)\\
&=&\int_{B_1(0)}\frac{|\nabla u_R(y)|^2+L(R)^2R^2
 u_R(y)^2v_R(y)^2}{|y|^{n-2}}dy
\int_{B_1(0)}\frac{|\nabla v_R(y)|^2+L(R)^2R^2
u_R(y)^2v_R(y)^2}{|y|^{n-2}}dy\\
&\in&(\frac{1}{C},C).
\end{eqnarray*}
 The upper bound
can be obtained by using \eqref{5.9} and the uniform upper bound of
$u_R$ and $v_R$ in $B_2(0)$. The lower bound can be got by
restricting the first integral to the domain $\{e\cdot x\geq
C(n)h\}\cap B_1(0)$, where $C(n)$ is chosen large enough so that in
$\{e\cdot x\geq C(n)h\}\cap B_1(0)$, $|\nabla u_R-e|\leq 1/4$ and
$L(R)^2R^2
 u_R(y)^2v_R(y)^2\leq e^{-R}$.
(This is an application of Lemma \ref{estimate exponential decay}.)
Similar estimates hold for the second integral.
\par
On the other hand, by rescaling the estimate in Corollary \ref{coro
5.9}, we get
\[\frac{1}{C}\frac{R^2}{L(R)^2}\leq J(1;0,u_R,v_R)=\frac{R^2}{L(R)^2}J(R;x_0,u,v)\leq C\frac{R^2}{L(R)^2}.\]
Combining these two we see
\[\frac{1}{C}\leq\frac{L(R)}{R}\leq C.\]
A suitable rescaling gives the required claim.
\end{proof}

The next estimate is a standard interior gradient estimate for
elliptic equations (cf. \cite{G-T}).
\begin{lem}\label{large R}
For every $ K>0$ and $R>1$, there exists a constant $C(K,R)>0$, if
$(u_\kappa,v_\kappa)$ satisfies \eqref{equation scaled} in
$B_{R}(0)$ with $\kappa\leq K$, and
$$\sup\limits_{B_R(0)}\left(|u_\kappa-x_1^+|+|v_\kappa-x_1^-|\right)\leq 1.$$
Then
$$\sup\limits_{B_{\frac{R}{2}}(0)}\left(|\nabla u_\kappa|+|\nabla v_\kappa|\right)\leq C(K,R).$$
\end{lem}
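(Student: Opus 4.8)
The plan is to treat this as a pure interior regularity statement for the scalar Poisson equation, with the two right-hand sides controlled entirely by the hypotheses. First I would observe that the approximation assumption forces a uniform sup bound: on $B_R(0)$ one has $0\le x_1^+\le R$, so $\|u_\kappa\|_{L^\infty(B_R(0))}\le R+1$ and likewise $\|v_\kappa\|_{L^\infty(B_R(0))}\le R+1$. Combining this with $\kappa\le K$, the right-hand sides of \eqref{equation scaled} satisfy
\[\|\kappa u_\kappa v_\kappa^2\|_{L^\infty(B_R(0))}\le K(R+1)^3,\qquad \|\kappa v_\kappa u_\kappa^2\|_{L^\infty(B_R(0))}\le K(R+1)^3,\]
with a bound depending only on $K$ and $R$.

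Next I would apply the interior $L^p$ (Calderón--Zygmund) estimate for the Laplacian (cf. \cite{G-T}). Fix some $p>n$. Writing $f_\kappa:=\kappa u_\kappa v_\kappa^2=\Delta u_\kappa$, the quantities $\|f_\kappa\|_{L^p(B_R(0))}$ and $\|u_\kappa\|_{L^p(B_R(0))}$ are bounded in terms of $K$ and $R$ by the previous step, so the interior $W^{2,p}$ estimate on the concentric pair $B_{3R/4}(0)\subset B_R(0)$ gives
\[\|u_\kappa\|_{W^{2,p}(B_{3R/4}(0))}\le C(n,p,R)\bigl(\|f_\kappa\|_{L^p(B_R(0))}+\|u_\kappa\|_{L^p(B_R(0))}\bigr)\le C(K,R).\]
Since $p>n$, the Sobolev embedding $W^{2,p}(B_{3R/4}(0))\hookrightarrow C^{1,1-n/p}(\overline{B_{3R/4}(0)})$ yields $\|u_\kappa\|_{C^1(B_{R/2}(0))}\le C(K,R)$, in particular the desired gradient bound for $u_\kappa$. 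Running the identical argument on the second equation in \eqref{equation scaled} gives the bound for $v_\kappa$, and adding the two proves the lemma. (Alternatively one could iterate interior Schauder estimates: the solutions being classical, $f_\kappa$ is continuous, so a first application places $u_\kappa,v_\kappa$ in $C^{1,\alpha}_{\mathrm{loc}}$, whence $f_\kappa\in C^{\alpha}_{\mathrm{loc}}$ and $u_\kappa,v_\kappa\in C^{2,\alpha}_{\mathrm{loc}}$, with constants controlled by $K$ and $R$; this gives bounds on all derivatives, more than is needed here.)

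This argument has no genuine obstacle; the only point deserving care is that $C(K,R)$ is independent of $\kappa$ and of the particular solution $(u_\kappa,v_\kappa)$. This is transparent from the above, since the only solution-dependent quantities entering the estimates are $\|u_\kappa\|_{L^\infty(B_R(0))}$, $\|v_\kappa\|_{L^\infty(B_R(0))}$ and $\|f_\kappa\|_{L^\infty(B_R(0))}$, each of which has been bounded purely in terms of $K$ and $R$ via the approximation hypothesis and the restriction $\kappa\le K$.
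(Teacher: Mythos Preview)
Your proof is correct and is precisely the ``standard interior gradient estimate for elliptic equations (cf.\ \cite{G-T})'' that the paper invokes without further detail: bound the right-hand sides in $L^\infty$ via the approximation hypothesis and $\kappa\le K$, then apply interior $W^{2,p}$ theory and Sobolev embedding. There is nothing to add---the paper gives no proof beyond this citation, and you have filled it in appropriately.
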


Finally, for solutions appearing in Lemma \ref{well approximation},
in the good part (far away from $\{e\cdot x=0\}$), we have the
following bound on the gradient.
\begin{lem}\label{large k}
For every $h_0\in(0,1/100)$, there exists a $K_2>0$, if
$(u_\kappa,v_\kappa)$ satisfies \eqref{equation scaled}
in $B_1(0)$ with $\kappa\geq K_2$, 
and
$$\sup\limits_{B_1(0)}\left(|u_\kappa-x_1^+|+|v_\kappa-x_1^-|\right)\leq h_0,$$
then there is a constant $C(n)$, which depends on $n$ only, such
that
$$\sup\limits_{B_{1/2}(0)\setminus\{|x_1|\geq 4h_0\}}\left(|\nabla u_\kappa|+|\nabla v_\kappa|\right)\leq C(n).$$
\end{lem}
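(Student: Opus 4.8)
The plan is to argue by contradiction, in the spirit of the proof of Theorem \ref{lem 2.3} and Lemma \ref{lem 6.2}. Suppose the statement fails: then there is a sequence $\kappa_j \to +\infty$ and solutions $(u_j, v_j)$ of \eqref{equation scaled} in $B_1(0)$ with parameter $\kappa_j$, satisfying $\sup_{B_1(0)}(|u_j - x_1^+| + |v_j - x_1^-|) \leq h_0$, but with points $z_j \in \overline{B_{1/2}(0)} \cap \{|x_1| \leq 4h_0\}$ at which $|\nabla u_j(z_j)| + |\nabla v_j(z_j)| \to +\infty$. By passing to a subsequence we may assume $z_j \to z_\infty$, and necessarily $|z_{\infty,1}| \leq 4h_0$. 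First I would isolate which component is responsible: since $u_j$ is close to $x_1^+$ and $v_j$ to $x_1^-$, on the set $\{|x_1| \leq 4h_0\}$ both $u_j$ and $v_j$ are bounded by $C h_0$, so both are small there; the blow-up below handles both simultaneously.

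Next I would perform a blow-up centered at $z_j$. Set $\varepsilon_j = u_j(z_j) + v_j(z_j) + \text{(something controlling the gradient)}$; more precisely, following the standard Gidas--Spruck / Fuchs-type doubling argument, one can choose scales $r_j \to 0$ and points $\bar z_j$ (near $z_j$) so that $M_j := |\nabla u_j(\bar z_j)| + |\nabla v_j(\bar z_j)| \to \infty$ realizes (up to a factor) the maximum of the gradient over a shrinking ball, and rescale
\[
\widetilde u_j(x) = M_j \, u_j(\bar z_j + M_j^{-1} x), \qquad \widetilde v_j(x) = M_j \, v_j(\bar z_j + M_j^{-1} x).
\]
Wait — the correct normalization for this system is the one keeping the equation invariant: $(u,v) \mapsto (\lambda u(\lambda^{-1}x), \lambda v(\lambda^{-1} x))$ changes $\kappa$ to $\kappa \lambda^{-4}$. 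So with $\lambda = M_j$ the rescaled pair solves \eqref{equation scaled} with parameter $\kappa_j M_j^{-4}$. One must control $\kappa_j M_j^{-4}$. If it stays bounded, the limit $(\widetilde u_\infty, \widetilde v_\infty)$ solves \eqref{equation scaled} with a finite parameter (or is a pair of subharmonic functions), has $|\nabla \widetilde u_\infty(0)| + |\nabla \widetilde v_\infty(0)| = 1$ (or comparable), is globally defined on $\mathbb{R}^n$, and — crucially — inherits a growth bound from the hypothesis $\sup_{B_1}(|u_j - x_1^+| + |v_j - x_1^-|) \leq h_0$: indeed $u_j \leq x_1^+ + h_0$ forces, after rescaling, $\widetilde u_\infty(x) \leq C(1 + |x|)$ and similarly for $\widetilde v_\infty$, and moreover near the origin $\widetilde u_\infty, \widetilde v_\infty$ are controlled by the smallness $h_0$ at unit scale which, because $M_j \to \infty$, propagates to the statement $\widetilde u_\infty$ and $\widetilde v_\infty$ both vanish at $0$ or even that one of the limits is identically zero. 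A Liouville-type theorem then forces $\widetilde u_\infty$ and $\widetilde v_\infty$ to be affine (a nonnegative harmonic function with at most linear growth, hence affine, and nonnegativity plus touching zero forces it to be linear or constant), contradicting the normalization that the gradient at $0$ is $1$ while the value structure is pinned down.

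The main obstacle is the dichotomy on $\kappa_j M_j^{-4}$. If $\kappa_j M_j^{-4} \to +\infty$, then the rescaled pair behaves like a solution of the singularly perturbed problem at infinite coupling, and by the convergence results (main results of \cite{DWZ2011} and \cite{TT2011}, as invoked for Theorem \ref{lem 2.3}) the limit is $(w^+, w^-)$ with $w$ harmonic; but $w$ has at most linear growth on all of $\mathbb{R}^n$, hence $w$ is affine, hence $w = e \cdot x + c$, so the rescaled gradient $|\nabla \widetilde u_\infty| \equiv |\nabla w^+|$ is bounded — but we also need it to be pinned to the touching point structure to get a contradiction, and here the hypothesis that $\bar z_j$ lies in $\{|x_1| \leq 4 h_0\}$ at unit scale, combined with $M_j^{-1} \to 0$, means the base point in the limit picture sits exactly on $\{w = 0\}$, and the normalized gradient there is $1$. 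For a harmonic $w = e \cdot x + c$ with $w(0) = 0$ this is fine only if $|e| = 1$; but then $\widetilde u_\infty = (e\cdot x)^+$ has gradient $0$ on the half-space $\{e \cdot x < 0\}$ and gradient $e$ on the other — and the rescaled gradient at the origin (on the free boundary) is not well-defined classically, yet $\widetilde u_j$ are smooth with gradient exactly $1$ there, and $\widetilde u_j \to (e\cdot x)^+$ in $C^1_{loc}$ away from $\{e \cdot x = 0\}$ forces, via the uniform Hölder bound (Theorem \ref{lem uniform Holder}) upgraded to a gradient bound by Lemma \ref{estimate exponential decay} (exponential smallness of the "wrong" component away from the interface), that the gradient of $\widetilde u_j$ at the origin tends to $0$ — contradiction. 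The delicate point throughout is ensuring the blow-up base points can be taken genuinely on or near the interface $\{u_j = v_j\}$ so that Lemma \ref{estimate exponential decay} applies on balls of the rescaled size; this is exactly where the restriction to $\{|x_1| \leq 4 h_0\}$ with $h_0$ small is used, guaranteeing that near $\bar z_j$ neither component has yet become "exponentially dominant," and hence the product $\kappa_j \widetilde u_j^2 \widetilde v_j^2$ can be shown to vanish in the limit, collapsing the system to the harmonic limit case and closing the argument.
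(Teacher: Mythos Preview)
You have misread the region on which the gradient bound is asserted. Despite the literal set notation in the statement, the preamble (``in the good part, far away from $\{e\cdot x=0\}$'') and the proof and usage in Theorem~\ref{thm 6.1} make clear that the estimate is claimed on $B_{1/2}(0)\cap\{|x_1|\geq 4h_0\}$, i.e.\ \emph{away} from the interface, not on $\{|x_1|<4h_0\}$. Your entire blow-up strategy is built around the near-interface region, where a uniform Lipschitz bound for \eqref{equation scaled} as $\kappa\to\infty$ is precisely the hard open problem the paper explicitly avoids (see the remark after the statement of Theorem~\ref{thm ACF monotonicity} about uniform Lipschitz continuity being known only for $n=1$). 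So as a proof of the intended lemma, the proposal is aimed at the wrong target.

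The paper's argument for the correct region is short and completely elementary, with no blow-up or contradiction. Fix $x_0\in B_{1/2}(0)$ with (say) $x_{0,1}\geq 4h_0$; the case $x_{0,1}\leq -4h_0$ is symmetric. On $B_{2h_0}(x_0)$ the hypothesis $|u_\kappa-x_1^+|+|v_\kappa-x_1^-|\leq h_0$ gives $u_\kappa\geq h_0\geq v_\kappa$. Hence $\Delta v_\kappa=\kappa u_\kappa^2 v_\kappa\geq \kappa h_0^2\,v_\kappa$ there, and Lemma~\ref{estimate exponential decay} yields $v_\kappa\leq C h_0\,e^{-c\,\kappa^{1/2}h_0^{2}}$ on $B_{h_0}(x_0)$. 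For $\kappa\geq K_2(h_0)$ this forces $\Delta u_\kappa=\kappa u_\kappa v_\kappa^2\leq C(n)$ and $\Delta v_\kappa\leq C(n)$ on $B_{h_0}(x_0)$. Finally one invokes the standard interior gradient estimate
\[
|\nabla u_\kappa(x_0)|\leq C\Big[\tfrac{1}{h_0}\operatorname*{osc}_{B_{h_0}(x_0)}u_\kappa+h_0\sup_{B_{h_0}(x_0)}\Delta u_\kappa\Big],
\]
and observes that $\operatorname{osc}_{B_{h_0}(x_0)}u_\kappa\leq \operatorname{osc}_{B_{h_0}(x_0)}x_1^+ +2h_0\leq 4h_0$, so the right-hand side is bounded by a dimensional constant; the same works for $v_\kappa$.

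Even setting the region issue aside, your blow-up sketch has genuine gaps: you never settle the dichotomy on $\kappa_j M_j^{-4}$ (if it tends to $0$ the limit decouples and the Liouville step you describe does not apply; if it tends to $+\infty$ the passage to a harmonic limit requires exactly the uniform $C^{0,1}$ compactness you are trying to prove), and the ``contradiction'' in the infinite-coupling case relies on $C^1$ convergence up to the free boundary, which is not available from Theorem~\ref{lem uniform Holder}. The direct argument above sidesteps all of this by working only where one component is already uniformly bounded below.
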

\begin{proof}
Take an $x_0\in B_{1/2}(0)\setminus\{x_1\geq 4h_0\}$. In
$B_{2h_0}(x_0)$,
\[u_\kappa\geq h_0\geq v_\kappa.\]
By Lemma \ref{estimate exponential decay}, we have
\[\sup_{B_{h_0}(x_0)}v_\kappa\leq C_1(n)h_0e^{-C_2(n)\kappa^{1/2} h_0^{3/2}}.\]
If $K_2$ is large enough and $\kappa\geq K_2$, there exists a
constant $C(n)$ such that
\[\sup_{B_{h_0}(x_0)}\left(\Delta u_\kappa+\Delta v_\kappa\right)\leq C(n).\]
Note that we always have $\Delta u_\kappa\geq0$, $\Delta v_\kappa\geq 0$. Then
\[|\nabla u_\kappa(x_0)|\leq C\left[\frac{1}{h_0}
\left(\sup_{B_{h_0}(x_0)}u_\kappa-\inf_{B_{h_0}(x_0)}u_\kappa\right)+h_0\sup_{B_{h_0}(x_0) }\Delta
u_\kappa\right]\leq C.\]
Similar estimates hold for $v_\kappa$.
\end{proof}
\begin{proof}[Proof of Theorem \ref{thm 6.1}]
For every $y_0\in \mathbb{R}^n$. Take $R/2=\text{dist}(y_0,\{u=v\})$
and an $x_0\in \{u=v\}$ realizing this distance. Without loss of
generality, assume $u>v$ in $B_{R/2}(y_0)$.

If $R\leq R_0$, Corollary \ref{coro 5.10} implies
\[\sup_{B_{R/2}(y_0)}\left(u+v\right)\leq C(R_0).\]
Then as in Lemma \ref{large R}, we can get an upper bound of
$|\nabla u(y_0)|+|\nabla v(y_0)|$, which depends only on $R_0$ and
some universal constants.

If $R\geq R_0$, Lemma \ref{lem 6.2} is applicable. So there exist a
vector $e$ and a small $h$ determined by $R_0$, such that
\eqref{well approximation} holds. Because $u>v$ in $B_{R/2}(y_0)$,
$\{e\cdot (x-x_0)<-2hR\}\cap B_{R/2}(y_0)=\emptyset$. Assuming
$h<1/10$, then we have $u-v\geq R/8$ in $B_{R/4}(y_0)$.
 Define
\[u_R(x)=R^{-1}u(x_0+Rx),\ \ \ \ v_R(x)=R^{-1}v(x_0+Rx).\]
Denote $z_0=R^{-1}(y_0-x_0)\in B_{1/2}(0)$. We can apply Lemma
\ref{large k} to $(u_R,v_R)$ to get a uniform upper bound of
$|\nabla u_R(z_0)|+|\nabla v_R(z_0)|$. This implies an upper bound
of $|\nabla u(y_0)|+|\nabla v(y_0)|$, which depends only on $h$ and
some universal constants.
\end{proof}

\section{Comparison with harmonic functions}
\numberwithin{equation}{section}
\setcounter{equation}{0}

In this section we first present some consequences of the global
Lipschitz continuity.  Note that it is possible to prove these
results directly as in the proof of Theorem \ref{thm 6.1}. However, for
simplicity of presentation, we state these results as corollaries
of the global Lipschitz continuity. These results hold for all solutions $(u,v)$ of \eqref{equation} satisfying \eqref{linear
growth}. Then by further assuming that $(u,v)$ is a minimizer (in the sense of \eqref{local energy minimizer}), we derive an energy estimate by comparing $u-v$ with a harmonic replacement. This estimate will play a crucial role in the next step of the proof of our main result.

\begin{lem}\label{lem 7.1}
There exists a universal constant $C$ such that
\[uv\leq C~~\text{in}~~\mathbb{R}^n.\]
\end{lem}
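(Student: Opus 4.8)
The plan is to exploit the global Lipschitz bound from Theorem \ref{thm 6.1} together with the exponential decay estimate in Lemma \ref{estimate exponential decay}, localizing near the free boundary $\{u=v\}$ just as in the proof of Theorem \ref{thm 6.1}. The key point is that $uv$ is automatically small away from $\{u=v\}$ (because there the smaller of the two functions decays exponentially), so the only place a uniform bound can fail is near $\{u=v\}$, and there the Lipschitz bound controls both $u$ and $v$ from above by the distance to the free boundary. Combining these two observations gives a universal pointwise bound on the product.

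In detail, I would argue as follows. Fix $y_0\in\mathbb{R}^n$ and let $d=\mathrm{dist}(y_0,\{u=v\})$, with $x_0\in\{u=v\}$ realizing the distance; without loss of generality $u\ge v$ on $B_d(y_0)$. Case 1: $d\le 1$. By Corollary \ref{coro 5.10}, $u+v\le C$ on $B_1(x_0)\supset\{y_0\}$, so $uv\le (u+v)^2/4\le C$. Case 2: $d>1$. On $B_{d}(y_0)$ we have $\Delta v=vu^2\ge 0$ and also, since $u=v$ somewhere on $\partial B_d(y_0)$-distance and $u$ is Lipschitz with universal constant $L$ from Theorem \ref{thm 6.1}, a lower bound $u\ge u(x_0)+\text{(something)}$ is not quite what I want — instead I use that on $B_{d/2}(y_0)$ one has $u\ge c\,d$ for a universal $c>0$. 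This lower bound on $u$ follows from the asymptotic flatness: by Lemma \ref{lem 6.2} (if $d\ge R_0$) $u$ is within $h\,d$ of $(e\cdot(x-x_0))^+$ on $B_d(x_0)$ with $|e|\ge 1/C$, and since $y_0$ is at distance $d$ from $\{u=v\}$ in the region $\{u>v\}$, the value $e\cdot(y_0-x_0)$ is comparable to $d$; hence $u\ge c\,d$ on $B_{d/4}(y_0)$, while $1\le d\le R_0$ is handled by Corollary \ref{coro 5.10} and the Harnack/strong maximum principle as in Case 1. Then on $B_{d/4}(y_0)$, $v$ satisfies $\Delta v=vu^2\ge c^2 d^2\, v\ge c^2 v$ (since $d\ge 1$), $v\ge 0$, and $v\le u\le Ld+C\le C' d$ by the Lipschitz bound. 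Applying Lemma \ref{estimate exponential decay} on $B_{d/4}(y_0)$ with $M=c^2$, $A=C'd$, $R=d/8$ gives $v(y_0)\le C_1 (C'd)\,e^{-C_2 d/8}$, so
\[
u(y_0)v(y_0)\le (C'd)\cdot C_1 C' d\, e^{-C_2 d/8}\le C,
\]
uniformly in $d\ge 1$, since $d^2 e^{-C_2 d/8}$ is bounded.

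The main obstacle is establishing the linear lower bound $u\ge c\,d$ on a definite fraction of $B_d(y_0)$ with a universal constant $c$; this is where the asymptotic flatness results of Section 3 and the nondegeneracy estimates of Section 5 (Corollaries \ref{coro 5.9} and \ref{coro 5.10}, together with Lemma \ref{lem 6.2}) are essential, and one must be careful that the constant in Lemma \ref{lem 6.2} does not deteriorate as $h\to 0$ — which is exactly the content of the bound $1/C\le|e|\le C$ stated there, with $C$ independent of $h$. Once that lower bound is in hand, the decay estimate for the subordinate component $v$ does the rest, and the bound is clearly universal (depending only on the dimension, the Lipschitz constant from Theorem \ref{thm 6.1}, the constant $R_0$ from Lemma \ref{lem 6.2}, and the constants in Corollary \ref{coro 5.10}). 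I would also remark that, alternatively, one can run the blow-up/compactness argument directly: if $u(y_i)v(y_i)\to\infty$, rescale $(u,v)$ around $y_i$ at the scale $\mathrm{dist}(y_i,\{u=v\})$ (which must then tend to infinity), pass to a limit that is a nontrivial pair solving $\Delta u_\infty=u_\infty v_\infty^2$, $\Delta v_\infty=v_\infty u_\infty^2$ with $u_\infty v_\infty\equiv 0$ far from the origin but $u_\infty v_\infty$ unbounded at the origin — contradicting Theorem \ref{lem 2.3} and the global Lipschitz bound, which passes to the limit. Either route works; the first is more elementary and I would present that one.
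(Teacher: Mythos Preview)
Your argument is correct, but it is considerably more elaborate than what the paper actually does. The paper's proof is a three-line application of the global Lipschitz bound on a \emph{unit} ball, with no reference to the free boundary or to the asymptotic flatness of Lemma~\ref{lem 6.2}: if $u(x_0)=A$ is large, then $|\nabla u|\le C$ gives $u\ge A-C$ on $B_1(x_0)$, hence $\Delta v\ge (A-C)^2 v$ there; Lemma~\ref{estimate exponential decay} (with the bound $\sup_{B_1(x_0)}v\le v(x_0)+C$ coming again from the Lipschitz estimate) yields $v(x_0)\le Ce^{-A/C}$, so $u(x_0)v(x_0)\le CAe^{-A/C}\le C$. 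The case $u(x_0)\le M$ and $v(x_0)\le M$ is trivial.

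By contrast, you split according to $d=\mathrm{dist}(y_0,\{u=v\})$ and, in the far case, invoke Lemma~\ref{lem 6.2} and Corollary~\ref{coro 5.10} to manufacture a linear lower bound $u\ge cd$ before applying the decay estimate. This works (indeed the geometric step ``$u>v$ on $B_d(y_0)$ forces $e\cdot(y_0-x_0)$ comparable to $d$'' is exactly the reasoning already used in the proof of Theorem~\ref{thm 6.1}), but it imports the whole machinery of Sections~4--5 into a lemma that only needs the Lipschitz bound. The payoff of the paper's route is simplicity and robustness: once Theorem~\ref{thm 6.1} is in hand, the bound on $uv$ (and likewise the bounds in Corollary~\ref{coro 7.1} and Lemma~\ref{lem 7.2}) follows by a local argument on a fixed ball, with no need to track the nearest free-boundary point or the scale $d$. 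Your approach would be the natural one if the global Lipschitz bound were \emph{not} available and one had to rely directly on the blow-down picture; here it is overkill.
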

\begin{proof}
Fix a large constant $M>0$. Take an $x_0\in\mathbb{R}^n$. Without
loss of generality we can assume $u(x_0)=A>M$. Because $|\nabla
u|+|\nabla v|\leq C$,
\[u\geq A-C>0~~\text{in}~~B_1(x_0),\]
and
\[\Delta v\geq (A-C)^2v~~\text{in}~~B_1(x_0).\]
By Lemma \ref{estimate exponential decay} and the Lipschitz
continuity of $v$, we have
\[v(x_0)\leq C_1(v(x_0)+C)e^{-C_2\sqrt{A-C}}.\]
This implies
\[v(x_0)\leq Ce^{-\frac{A}{C}}.\]
Hence
\[u(x_0)v(x_0)\leq CAe^{-\frac{A}{C}}\leq C(M),\]
for a  constant $C(M)$ depending only on $M$.
\end{proof}
The same method gives the following
\begin{coro}\label{coro 7.1}
There exists a universal constant $C$ such that
\[uv^2+u^2v\leq C~~\text{in}~~\mathbb{R}^n.\]
\end{coro}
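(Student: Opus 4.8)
The plan is to reprise the argument of Lemma~\ref{lem 7.1} almost verbatim; the only difference is that the polynomial prefactor coming from the two large factors is now quadratic rather than linear, and this is still crushed by the exponential decay of the small component, so no new idea is needed.

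First I would fix a universal constant $M$, chosen larger than the global Lipschitz constant $C$ of Theorem~\ref{thm 6.1} (and larger than an absolute threshold fixed below), and take an arbitrary point $x_0\in\mathbb{R}^n$. Since both the system \eqref{equation} and the quantity $uv^2+u^2v$ are symmetric under the interchange $u\leftrightarrow v$, I may assume $u(x_0)=A\ge v(x_0)$; then $u(x_0)v(x_0)^2\le u(x_0)^2v(x_0)$ at $x_0$, so it suffices to bound $u(x_0)^2v(x_0)$ uniformly in $x_0$. If $A\le M$ this is immediate, since $u(x_0)^2v(x_0)\le M^3$, so assume $A>M$. Taking $M>2C$, the global Lipschitz bound forces $u\ge A-C>0$ on $B_1(x_0)$, hence $\Delta v=u^2v\ge(A-C)^2 v$ there, while $0\le v\le v(x_0)+C$ on $B_1(x_0)$ by the same bound. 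Lemma~\ref{estimate exponential decay}, applied on $B_1(x_0)$ exactly as in the proof of Lemma~\ref{lem 7.1}, then gives
\[
v(x_0)\le C_1(n)\bigl(v(x_0)+C\bigr)e^{-C_2(n)(A-C)/2}.
\]
For $M$ large enough that the exponential factor is at most $1/2$ whenever $A>M$, this rearranges to $v(x_0)\le Ce^{-A/C}$ with a (possibly larger) universal constant $C$, whence
\[
u(x_0)^2v(x_0)\le CA^2e^{-A/C}\le C(M),
\]
because $t\mapsto t^2e^{-t/C}$ is bounded on $[0,+\infty)$. Combining the two cases bounds $u^2v$, and hence $uv^2+u^2v$, uniformly on $\mathbb{R}^n$.

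I do not anticipate a real obstacle: the substance is the principle ``where one component is large, the other is exponentially small,'' already proved in Lemma~\ref{lem 7.1}, and the only thing to check is the innocuous replacement of the linear prefactor $A$ by the quadratic $A^2$. The single point that needs a moment's care is the choice of the threshold $M$, which must simultaneously be larger than $2C$ (so that $u$ stays positive on $B_1(x_0)$) and large enough to absorb $C_1(n)$ when rearranging the exponential inequality; both requirements pin $M$ down in terms of universal constants only.
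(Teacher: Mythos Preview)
Your proposal is correct and follows exactly the approach the paper intends: the paper merely states ``the same method gives the following'' after Lemma~\ref{lem 7.1}, and your argument is precisely that method, with the harmless replacement of the linear prefactor $A$ by $A^2$ absorbed by the exponential decay. Your computation of the exponent $e^{-C_2(n)(A-C)/2}$ from Lemma~\ref{estimate exponential decay} (with $R=1/2$) is in fact slightly cleaner than the paper's own write-up of Lemma~\ref{lem 7.1}.
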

\begin{lem}\label{lem 7.2}
\[u|\nabla v|+v|\nabla u|\leq C~~\text{in}~~\mathbb{R}^n.\]
\end{lem}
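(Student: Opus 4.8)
The plan is to follow the same strategy as in Lemma \ref{lem 7.1} and Corollary \ref{coro 7.1}, combining the exponential decay estimate (Lemma \ref{estimate exponential decay}) with the global Lipschitz bound from Theorem \ref{thm 6.1}, and then upgrading a pointwise smallness of one component into a decay of its gradient via an interior gradient estimate. Fix $x_0\in\mathbb{R}^n$; by symmetry it suffices to bound $v(x_0)|\nabla u(x_0)|$. If $v(x_0)\leq 1$ we are done since $|\nabla u|\leq C$ everywhere, so assume $v(x_0)=A>1$. As in Lemma \ref{lem 7.1}, the Lipschitz bound gives $v\geq A-C$ on $B_1(x_0)$ (shrinking the radius if necessary so that $A-C>0$), hence $\Delta u\geq (A-C)^2 u$ there, and Lemma \ref{estimate exponential decay} together with the Lipschitz bound on $u$ yields
\[
u \leq C e^{-A/C}\quad\text{in } B_{1/2}(x_0).
\]

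Next I would convert this into a gradient bound. On $B_{1/2}(x_0)$ we have $0\leq \Delta u = uv^2 \leq C e^{-A/C}(v(x_0)+C)^2 \leq C A^2 e^{-A/C}$, using $v\leq v(x_0)+C$ on the ball. Now apply the standard interior gradient estimate for the Poisson equation on, say, $B_{1/4}(x_0)$ (cf. \cite{G-T}):
\[
|\nabla u(x_0)|\leq C\Big(\sup_{B_{1/2}(x_0)}u + \sup_{B_{1/2}(x_0)}|\Delta u|\Big)\leq C e^{-A/C} + C A^2 e^{-A/C}\leq C A^2 e^{-A/C}.
\]
Therefore
\[
v(x_0)|\nabla u(x_0)|\leq C A^3 e^{-A/C},
\]
which is bounded uniformly in $A>1$. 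The symmetric argument with the roles of $u$ and $v$ exchanged bounds $u|\nabla v|$, and combining the two cases (and recalling $|\nabla u|+|\nabla v|\leq C$ handles the region where both $u$ and $v$ are bounded) gives $u|\nabla v|+v|\nabla u|\leq C$ throughout $\mathbb{R}^n$.

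The only mildly delicate point is making sure the constants are genuinely universal: the radius on which $v\geq A-C$ holds depends only on the Lipschitz constant $C$ from Theorem \ref{thm 6.1}, the constants in Lemma \ref{estimate exponential decay} depend only on $n$, and the interior gradient estimate constant depends only on $n$ and the fixed radii; so nothing depends on $x_0$. The exponential factor $e^{-A/C}$ dominates any polynomial power of $A$, so the final bound is uniform. This is the same mechanism already used twice above, so I expect no real obstacle — the main care is bookkeeping the dependence of constants and the choice of auxiliary radii.
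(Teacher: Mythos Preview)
Your proof is correct and follows essentially the same approach as the paper: use the global Lipschitz bound to get a lower bound on the large component in a unit ball, apply the exponential decay lemma to the small component, and then upgrade to a gradient bound via an interior gradient estimate. The only cosmetic differences are that the paper bounds $u|\nabla v|$ assuming $u(x_0)=A>M$ (rather than $v|\nabla u|$ assuming $v(x_0)=A>1$) and absorbs the polynomial factors $A^2$ into the exponential in one step.
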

\begin{proof}
Fix a large constant $M>0$. Take an $x_0\in\mathbb{R}^n$. Because
$|\nabla v|\leq C$, we can assume $u(x_0)=A>M$. The proof of the
previous lemma in fact shows
\[v\leq Ce^{-\frac{A}{C}}~~\text{in}~~B_1(x_0).\]
Then by the gradient estimate of elliptic equations and the equation
of $v$ we get
\[|\nabla v(x_0)|\leq\sup_{B_1(x_0)}(v+\Delta v)\leq Ce^{-\frac{A}{C}}.\]
So
\[u(x_0)|\nabla v(x_0)|\leq CAe^{-\frac{A}{C}}\leq C.\qedhere\]
\end{proof}
\begin{lem}
For every ball $B_R(x_0)$,
\[\int_{B_R(x_0)}u^2v^2\leq CR^{n-1}.\]
\end{lem}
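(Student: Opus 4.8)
The plan is to reduce the bound to an exponential decay of $u^2v^2$ away from the nodal set $\{u=v\}$, together with a density estimate for that set. For the reduction, note
\[
\Delta(uv)=u\,\Delta v+v\,\Delta u+2\nabla u\cdot\nabla v=uv(u^2+v^2)+2\nabla u\cdot\nabla v\ \ge\ 2u^2v^2+2\nabla u\cdot\nabla v,
\]
so that $u^2v^2\le\tfrac12\Delta(uv)-\nabla u\cdot\nabla v$ pointwise. Integrating over $B_R(x_0)$ and using the divergence theorem,
\[
\int_{B_R(x_0)}u^2v^2\ \le\ \frac12\int_{\partial B_R(x_0)}\partial_\nu(uv)\,d\sigma-\int_{B_R(x_0)}\nabla u\cdot\nabla v\ \le\ CR^{n-1}+\int_{B_R(x_0)}|\nabla u\cdot\nabla v|,
\]
where the boundary term is controlled by $|\partial_\nu(uv)|\le u|\nabla v|+v|\nabla u|\le C$ (Lemma \ref{lem 7.2}). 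Thus it suffices to prove $\int_{B_R(x_0)}|\nabla u\cdot\nabla v|\le CR^{n-1}$.

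Combining the global Lipschitz bound (Theorem \ref{thm 6.1}), the bound $uv\le C$ (Lemma \ref{lem 7.1}) and the exponential decay estimate of Lemma \ref{estimate exponential decay}, exactly as in the proofs of Lemmas \ref{lem 7.1} and \ref{lem 7.2}, one gets a universal pointwise bound
\[
u(x)^2v(x)^2+|\nabla u(x)\cdot\nabla v(x)|\ \le\ Ce^{-|u(x)-v(x)|/C},\qquad x\in\mathbb{R}^n;
\]
indeed, if $u(x)-v(x)=\tau$ is large then $u\ge\tau/2$ on $B_1(x)$, hence $v\le2C/\tau$ there by $uv\le C$ and $\Delta v\ge(\tau/2)^2v$, so Lemma \ref{estimate exponential decay} and an interior gradient estimate give $v(x)+|\nabla v(x)|\le Ce^{-\tau/C}$, while for bounded $\tau$ one uses $u^2v^2=(uv)^2\le C^2$ and $|\nabla u\cdot\nabla v|\le C^2$. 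Away from $\{u=v\}$ this decay can be rephrased in terms of $d(x):=\operatorname{dist}(x,\{u=v\})$: if $d(x)\ge R_0$, take $x'\in\{u=v\}$ with $|x-x'|=d(x)$ and apply Lemma \ref{lem 6.2} at $x'$ at scale $\sim d(x)$; the approximating hyperplane has slope comparable to $1$, and since $u-v$ changes sign across it the hyperplane lies in a $Ch\,d(x)$–neighbourhood of $\{u=v\}$, so $x$ sits at distance $\ge\tfrac12 d(x)$ from it and therefore $|u-v|(x)\ge\tfrac1C d(x)$. Together with the trivial bound for $d(x)\le R_0$, a layer–cake decomposition yields, with $d(x)=\operatorname{dist}(x,\{u=v\})$,
\[
\int_{B_R(x_0)}|\nabla u\cdot\nabla v|\ \le\ C\big|\{d<R_0\}\cap B_R(x_0)\big|+C\int_0^\infty e^{-s/C}\big|\{d<s\}\cap B_R(x_0)\big|\,ds,
\]
so everything is reduced to the Minkowski–content estimate $\big|\{x\in B_R(x_0):\operatorname{dist}(x,\{u=v\})<\rho\}\big|\le CR^{n-1}(\rho+1)$ for $1\le\rho\le R$, under which both terms above are $\le CR^{n-1}$.

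The main obstacle is this content estimate for the nodal set, i.e. essentially $\mathcal H^{n-1}\big(\{u=v\}\cap B_R(x_0)\big)\le CR^{n-1}$ with a constant independent of $x_0$ and $R$. The input is the asymptotic flatness from Lemma \ref{lem 6.2}: at every $p\in\{u=v\}$ and every scale $r\ge R_0$, the set $\{u=v\}\cap B_r(p)$ lies within $hr$ of a hyperplane through $p$ of slope comparable to $1$ (and, $u-v$ changing sign across it, is comparable to that slab). Covering $\{u=v\}\cap B_R(x_0)$ by balls $B_{R_0}(p_i)$ with $p_i\in\{u=v\}$ and the $B_{R_0/5}(p_i)$ disjoint, one estimates the covering number by $C(R/R_0)^{n-1}$; the extra ingredient needed to keep the flatness constant from accumulating over dyadic scales is the universal Almgren frequency bound $N(\,\cdot\,;x)\le1$ of Corollary \ref{coro 4.3} (equivalently, one uses the explicit quantitative rate $e^{-Cr^{-1/2}}$ in the Alt--Caffarelli--Friedman monotonicity of Corollary \ref{coro 5.8}, which makes the clean-up at scale $r$ quantitatively close to a linear profile). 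Granting the covering-number bound, the content estimate follows and hence the lemma. Apart from this density estimate — which I expect to be the genuinely hard point — all the remaining ingredients (the Green identity reduction, the pointwise exponential decay, the non-degeneracy near $\{u=v\}$, the layer-cake bookkeeping, and the routine reduction of general $B_R(x_0)$ to balls centred on $\{u=v\}$ via Corollary \ref{coro 5.10}) are standard.
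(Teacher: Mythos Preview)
Your approach is vastly more complicated than necessary, and the ``genuinely hard point'' you flag --- the Minkowski content estimate $\big|\{d<\rho\}\cap B_R(x_0)\big|\le CR^{n-1}(\rho+1)$ --- is a real gap: you sketch a covering argument based on Lemma~\ref{lem 6.2}, but flatness at a single large scale does not by itself yield the covering-number bound $C(R/R_0)^{n-1}$ without an iteration across scales (Reifenberg-type reasoning), and nothing in the paper provides this. You essentially concede the point by saying ``granting the covering-number bound''; as written, the argument is incomplete.

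More importantly, the paper avoids all of this. Its proof is three lines: since $\Delta u=uv^2$, integrating over $B_R(x_0)$ and using the global Lipschitz bound gives
\[
\int_{B_R(x_0)} uv^2=\int_{\partial B_R(x_0)} u_r\le CR^{n-1},
\]
and similarly for $u^2v$. Then Lemma~\ref{lem 7.1} ($uv\le C$) implies that at every point either $u$ or $v$ is bounded, hence pointwise $u^2v^2\le C(uv^2+u^2v)$, and the estimate follows. Your detour through $\Delta(uv)$, the cross-gradient term $\nabla u\cdot\nabla v$, exponential decay, and the geometry of $\{u=v\}$ is entirely unnecessary: the equation $\Delta u=uv^2$ already converts the volume integral of $uv^2$ into a boundary flux, and the pointwise bound $uv\le C$ then upgrades $uv^2+u^2v$ to $u^2v^2$ for free.
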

\begin{proof}
Note that
$$\int_{B_R(x_0)}uv^2=\int_{B_R(x_0)}\Delta u=\int_{\partial B_R(x_0)}u_r
\leq \int_{\partial B_R(x_0)}|\nabla u|\leq CR^{n-1}.$$ The same
estimate holds for $vu^2$.
\par
Now by  Lemma \ref{lem 7.1}, for every $x$ either $u(x)$ or
$v(x)\leq C$, so \[\int_{B_R(x_0)}u^2v^2\leq
C\int_{B_R(x_0)}u^2v+uv^2\leq CR^{n-1}.\qedhere\]
\end{proof}
For every ball $B_R(x_0)$, let $\varphi_{R,x_0}$ be the solution of
the problem
\begin{equation*}
\left\{ \begin{aligned}
 &\Delta \varphi_{R,x_0}=0~~\text{in}~~B_R(x_0),\\
 &\varphi_{R,x_0}=u-v~~\text{on}~~\partial B_R(x_0).
                          \end{aligned} \right.
\end{equation*}
Because $u$ and $v$ are smooth in $\mathbb{R}^n$,
$\varphi_{R,x_0}\in C^{\infty}(\overline{B_R(x_0)})$. It may be true
that $\varphi_{R,x_0}$ are uniformly Lipschitz, that is, there is a
universal constant $C>0$ such that
\begin{equation*}
\sup_{B_{R,x_0}}|\nabla\varphi_R(x_0)|\leq C.
\end{equation*}
However currently we do not know how to prove this. Instead we give
a weaker result, which is sufficient for our use.
\begin{lem}\label{lem 6.5}
For every $\delta\in(0,1)$, there exists a universal constant $C$
such that for every $R\geq 1$ and $x_0\in\mathbb{R}^n$,
\begin{equation}\label{Lip for harmonic extenstion}
\sup_{B_R(x_0)}|\nabla\varphi_{R,x_0}|\leq CR^{\delta}.
\end{equation}
\end{lem}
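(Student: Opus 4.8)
The plan is to rescale the ball $B_R(x_0)$ to unit size and feed into a global Schauder estimate a \emph{universal} $C^{1,\alpha}$ bound for $u-v$, which is available even though $u-v$ itself grows linearly, because its Laplacian is universally bounded.

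\emph{Step 1 (a universal $C^{1,\alpha}$ bound for $u-v$).} First I would observe that $\Delta(u-v)=uv^2-u^2v$, so by Corollary \ref{coro 7.1} (and $u,v>0$) one has $|\Delta(u-v)|\le uv^2+u^2v\le C$ on $\mathbb{R}^n$, while Theorem \ref{thm 6.1} gives $|\nabla(u-v)|\le C$. Fix $\alpha\in(0,1)$ and $p>n$ with $1-n/p=\alpha$. Applying the interior $W^{2,p}$ estimate to the centered function $(u-v)-(u-v)(z)$ on $B_1(z)\subset B_2(z)$ (centering removes the linear growth from the $L^p$ norm), followed by the Sobolev embedding $W^{2,p}(B_1)\hookrightarrow C^{1,\alpha}(\overline{B_1})$, produces a constant $C=C(n,\alpha)$ independent of $z$ with $\|\nabla(u-v)\|_{L^\infty(B_1(z))}+[\nabla(u-v)]_{C^{0,\alpha}(B_1(z))}\le C$. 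Since $z$ is arbitrary, and since pairs of points at distance $>1$ are controlled directly by $\|\nabla(u-v)\|_{L^\infty}\le C$, this globalizes to
\[\|\nabla(u-v)\|_{L^\infty(\mathbb{R}^n)}+[\nabla(u-v)]_{C^{0,\alpha}(\mathbb{R}^n)}\le C.\]

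\emph{Step 2 (scaling and boundary Schauder).} Since $\nabla\varphi_{R,x_0}$ is unchanged if a constant is subtracted from the boundary data, I may assume the boundary data is $w:=(u-v)-(u-v)(x_0)$. Set $\Phi(y):=R^{-1}\varphi_{R,x_0}(x_0+Ry)$; this is harmonic in $B_1(0)$ with $\Phi=W$ on $\partial B_1(0)$, where $W(y):=R^{-1}w(x_0+Ry)$. Tracking the scaling and using Step 1 together with the Lipschitz bound on $w$ at scale $R$: $\|W\|_{L^\infty(\partial B_1)}\le C$, $\|\nabla W\|_{L^\infty(\partial B_1)}=\|\nabla(u-v)\|_{L^\infty}\le C$, and $[\nabla W]_{C^{0,\alpha}(\partial B_1)}=R^{\alpha}[\nabla(u-v)]_{C^{0,\alpha}}\le CR^{\alpha}$, so $\|W\|_{C^{1,\alpha}(\partial B_1)}\le CR^{\alpha}$ for $R\ge1$. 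The global $C^{1,\alpha}$ Schauder estimate for the harmonic Dirichlet problem on $B_1$ (see \cite{G-T}) then gives $\|\Phi\|_{C^{1,\alpha}(\overline{B_1})}\le C(n,\alpha)\|W\|_{C^{1,\alpha}(\partial B_1)}\le CR^{\alpha}$, hence $\sup_{\overline{B_1}}|\nabla\Phi|\le CR^{\alpha}$. Scaling back, $\nabla_y\Phi(y)=(\nabla\varphi_{R,x_0})(x_0+Ry)$, so $\sup_{B_R(x_0)}|\nabla\varphi_{R,x_0}|\le CR^{\alpha}$; taking $\alpha=\delta$ completes the proof.

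The one genuinely technical step is Step 1 — producing a universal $C^{1,\alpha}$ bound for $u-v$ out of its universally bounded right-hand side $uv^2-u^2v$ and the global Lipschitz bound of Theorem \ref{thm 6.1}; everything afterward is scaling bookkeeping and a citation to standard elliptic regularity. This route also explains why the exponent $R^{\delta}$, rather than a uniform bound, is natural here: only a $C^{1,\alpha}$ (and not $C^{1,1}$) estimate for $u-v$ is available, and the $\alpha$-loss in the Schauder estimate, once the scaling is undone, is exactly the factor $R^{\alpha}=R^{\delta}$. Removing it would amount to a genuine $W^{1,\infty}$ bound for the harmonic extension of Lipschitz boundary data, which — as remarked just before the statement — is left open.
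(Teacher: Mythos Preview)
Your proof is correct and takes a genuinely different route from the paper's.

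The paper works only with the Lipschitz bound on $u-v$ (Theorem \ref{thm 6.1}) together with the $L^\infty$ bound on $\Delta(u-v)$ from Corollary \ref{coro 7.1}, but never upgrades this to $C^{1,\alpha}$. Instead it rescales to $B_1$, invokes only the global $C^{1-\delta}$ H\"older estimate for the harmonic extension of Lipschitz boundary data, and uses this to show $|u-v-\varphi_{R,x_0}|\leq CR^\delta(R-|x-x_0|)^{1-\delta}$ near $\partial B_R(x_0)$. A barrier-type boundary gradient estimate (the function vanishes on the boundary, is $\leq CR^\delta$ in a unit neighborhood, and has bounded Laplacian) then gives $|\nabla(u-v-\varphi_{R,x_0})|\leq CR^\delta$ on $\partial B_R(x_0)$, hence $|\nabla\varphi_{R,x_0}|\leq CR^\delta$ on the boundary, and the maximum principle for $|\nabla\varphi_{R,x_0}|$ propagates this to the interior.

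Your approach front-loads the work: you first spend the bounded Laplacian and the Lipschitz bound, via interior $W^{2,p}$ estimates and Sobolev embedding, to obtain a uniform $[\nabla(u-v)]_{C^{0,\alpha}}$ bound on all of $\mathbb{R}^n$. With $C^{1,\alpha}$ boundary data in hand, the rescaled problem on $B_1$ falls to a single global Schauder estimate, and all that remains is to track the factor $R^\alpha$ through the scaling of the H\"older seminorm. This is more direct and avoids both the separate barrier argument and the final appeal to the maximum principle; the trade-off is that the paper's argument is slightly more elementary, needing only the $C^{1-\delta}$ regularity of harmonic extensions of Lipschitz data rather than the full $C^{1,\alpha}$ Schauder theory.
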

\begin{proof}
Because $u$ and $v$ are globally Lipschitz continuous,
\[\tilde{u}(x):=R^{-1}\left(u(x_0+Rx)-u(x_0)\right),\ \ \ \ \tilde{v}(x):=R^{-1}\left
(v(x_0+Rx)-v(x_0)\right)\] are uniformly bounded in
$Lip(\overline{B_1(0)})$. By the global H\"{o}lder continuity
estimate applied to harmonic functions (see \cite{G-T}),
\[\tilde{\varphi}(x):=\frac{1}{R}\left(\varphi_{R,x_0}(x_0+Rx)-u(x_0)+v(x_0)\right)\]
is uniformly bounded in $C^{1-\delta}(\overline{B_1(0)})$.

By noting the boundary condition of $\tilde{\varphi}$, we get a
universal constant $C$ such that
\[|\tilde{u}(x)-\tilde{v}(x)-\tilde{\varphi}(x)|\leq
C(1-|x|)^{1-\delta}~~\text{in}~~B_1(0).\]
Rescaling back we get
\begin{equation}\label{7.1}
|u(x)-v(x)-\varphi_{R,x_0}(x)|\leq CR^\delta(R-|x-x_0|)^{1-\delta}~~\text{in}~~B_R(x_0).
\end{equation}
By the boundary gradient estimate \cite{G-T}, for every
$x\in\partial B_R(x_0)$, we have
\begin{eqnarray*}
|\nabla(u-v-\varphi_{R,x_0})(x)| &\leq& C\sup_{B_1(x)\cap
B_R(x_0)}|\Delta(u-v-\varphi_{R,x_0})|+C\sup_{B_1(x)\cap
B_R(x_0)}|u-v-\varphi_{R,x_0}|\\
&\leq& CR^\delta.
\end{eqnarray*}
In the above we have used Corollary \ref{coro 7.1} to estimate
$\Delta(u-v)$.

Since $|\nabla u|+|\nabla v|\leq C$ for a universal constant, by
choosing a larger constant, we get
\[\sup_{\partial B_R(x_0)}|\nabla \varphi_{R,x_0}|\leq CR^\delta+C\leq CR^{\delta}.\]
\eqref{Lip for harmonic extenstion} follows by applying the maximum
principle to $|\nabla\varphi_{R,x_0}|$.
\end{proof}

\begin{prop}\label{lem comparison of the energy}
There exists a constant $C$, such that for all $x_0\in\mathbb{R}^n$ and $R\geq 1$,
\[\int_{B_R(x_0)}|\nabla(u-v-\varphi_{R,x_0})|^2\leq CR^{n-\frac{1}{2}}.\]
\end{prop}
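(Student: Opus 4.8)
The plan is to write $w = u - v$, note that $w$ satisfies $\Delta w = uv^2 - u^2v = uv(v-u)$, which by Corollary \ref{coro 7.1} obeys $|\Delta w| \le C$ uniformly, and compare $w$ with its harmonic replacement $\varphi_{R,x_0}$ on $B_R(x_0)$. Setting $\psi = w - \varphi_{R,x_0}$, we have $\Delta\psi = \Delta w$ in $B_R(x_0)$ and $\psi = 0$ on $\partial B_R(x_0)$. Integrating by parts,
\[
\int_{B_R(x_0)}|\nabla\psi|^2 = -\int_{B_R(x_0)}\psi\,\Delta w = -\int_{B_R(x_0)}\psi\, uv(v-u).
\]
The naive bound $|\Delta w|\le C$ gives only $\int|\nabla\psi|^2 \le C\|\psi\|_{L^1(B_R)} \le CR^{1+\delta}\cdot R^n$ via \eqref{7.1}, which is far worse than $R^{n-1/2}$; so the minimizing property must be used to do better. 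The point is that the right-hand side is supported essentially on the thin transition region $\{u=v\}$-neighborhood, because $uv(v-u)$ is exponentially small where $u-v$ is bounded away from zero (Lemma \ref{estimate exponential decay}), and there $\psi$ itself is small by \eqref{7.1}.

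The key step is to invoke the local energy minimality \eqref{local energy minimizer}. The natural competitor is $(\bar u, \bar v)$ obtained from the harmonic replacement: roughly, take $\bar u = \varphi_{R,x_0}^+$ and $\bar v = \varphi_{R,x_0}^-$ in $B_R(x_0)$, suitably glued to $(u,v)$ near $\partial B_R(x_0)$ so that the boundary data match (using \eqref{7.1} and the gradient bound Lemma \ref{lem 6.5}, the gluing layer of width $O(1)$ near the boundary costs at most $O(R^{n-1+\delta})$ in energy). Since $\bar u\bar v = 0$, the comparison \eqref{local energy minimizer} yields
\[
\int_{B_R(x_0)}|\nabla u|^2 + |\nabla v|^2 + u^2v^2 \;\le\; \int_{B_R(x_0)}|\nabla\bar u|^2 + |\nabla\bar v|^2 \;+\; CR^{n-1+\delta}.
\]
Now $|\nabla\bar u|^2 + |\nabla\bar v|^2 = |\nabla\varphi_{R,x_0}|^2$ a.e. (the positive and negative parts have disjoint supports), and $|\nabla u|^2 + |\nabla v|^2 = |\nabla w|^2 + 2(\nabla u\cdot\nabla v - v\Delta u$-type cross terms$)$; more precisely $|\nabla u|^2+|\nabla v|^2 = |\nabla(u-v)|^2 + 2\nabla u\cdot\nabla v$, and $\int 2\nabla u\cdot\nabla v = -2\int u\Delta v + \text{bdry} = -2\int u^2v^2 + \text{bdry}$. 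Combining, the bulk terms $\int|\nabla u|^2+|\nabla v|^2 - \int|\nabla\varphi|^2$ reduce, after using $\int|\nabla\varphi|^2\le\int|\nabla w|^2$ (harmonicity), to controlling $\int_{B_R}|\nabla\psi|^2$ against $\int u^2v^2 \le CR^{n-1}$ plus the gluing error $CR^{n-1+\delta}$ plus boundary terms of size $O(R^{n-1+\delta})$ coming from $u|\nabla v| + v|\nabla u|\le C$ (Lemma \ref{lem 7.2}). Choosing $\delta = 1/2$ gives exactly the exponent $n - 1/2$; with a bit more care (using $\delta$ small and the sharper $(R-|x-x_0|)^{1-\delta}$ decay in \eqref{7.1} to integrate the transition-region contribution) one recovers $n-1/2$ cleanly.

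The main obstacle I expect is the careful bookkeeping of the boundary layer and the cross terms: one must construct the competitor $(\bar u,\bar v)$ so that it genuinely agrees with $(u,v)$ outside $B_R(x_0)$ (a strictly interior cutoff is needed, costing a collar of controlled width), and then track that every discrepancy — the gluing energy, the boundary integrals from integration by parts in both the identity for $\int|\nabla\psi|^2$ and in the expansion of $\int|\nabla u|^2+|\nabla v|^2$, and the $\int u^2v^2$ term — is $O(R^{n-1/2})$ or smaller. The estimates $uv\le C$, $uv^2+u^2v\le C$, $u|\nabla v|+v|\nabla u|\le C$, $\int_{B_R}u^2v^2\le CR^{n-1}$, and the Hölder bound \eqref{7.1} on $w-\varphi_{R,x_0}$ with its boundary decay are precisely the tools that make each such term fit under $R^{n-1/2}$; assembling them in the right order is the real content.
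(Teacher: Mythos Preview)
Your outline is essentially the paper's proof: construct the competitor $(\bar u,\bar v)$ equal to $(\varphi_{R,x_0}^+,\varphi_{R,x_0}^-)$ in $B_{R-1}$, glue linearly to $(u,v)$ across the width-$1$ collar, invoke local minimality, and handle the cross term $\int\nabla u\cdot\nabla v$ separately via integration by parts and Lemma~\ref{lem 7.2}. Two points need fixing.

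First, a minor bookkeeping issue: in the collar you have $|\nabla\bar u|\le CR^\delta$ (from Lemma~\ref{lem 6.5} and \eqref{7.1}) and $\bar u\bar v\le CR^{2\delta}$, so the collar costs $O(R^{n-1+4\delta})$, not $O(R^{n-1+\delta})$; you need $\delta<1/8$, not $\delta=1/2$.

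Second, and this is a genuine gap: your computation of the cross term is wrong. One has $u\Delta v=u\cdot vu^2=u^3v$ (or symmetrically $v\Delta u=uv^3$), not $u^2v^2$. Thus after integrating by parts,
\[
\Big|\int_{B_R}\nabla u\cdot\nabla v\Big|\le CR^{n-1}+\int_{B_R} v\,\Delta u,
\]
and the last integral is $\int_{B_R} uv^3$, which is \emph{not} controlled by the estimate $\int_{B_R}u^2v^2\le CR^{n-1}$ you quote (the pointwise bound $uv^3\le C v$ loses a full power of $R$). The paper closes this with a layer-cake argument: writing $d\mu=\Delta u\,dx$, one has $\mu(B_R)\le CR^{n-1}$ from the divergence theorem and the global Lipschitz bound, while in $\{v>t\}$ the proof of Lemma~\ref{lem 7.1} gives $u\le Ce^{-t/C}$ and hence $\Delta u=uv^2\le Ce^{-t/C}$, so $\mu(\{v>t\}\cap B_R)\le Ce^{-t/C}R^n$. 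Splitting at $M=C\log R$,
\[
\int_{B_R} v\,d\mu=\int_0^\infty\mu(\{v>t\}\cap B_R)\,dt\le CMR^{n-1}+CR^n e^{-M/C}\le CR^{n-1}\log R\le CR^{n-1/2}.
\]
With this correction your sketch goes through and matches the paper's proof.
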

\begin{proof}
Fix a $\delta\in(0,1/8)$ and take a constant $C$ so that the
previous lemma holds. For simplicity, we assume $x_0=0$ and denote
$\varphi_{R,x_0}$ by $\varphi$. Direct calculations give
\[\int_{B_R}|\nabla(u-v-\varphi)|^2=\int_{B_R}|\nabla u|^2+|\nabla v|^2+|\nabla\varphi|^2
-2\nabla u\nabla v-2\nabla u\nabla\varphi+2\nabla v\nabla\varphi.\]
We divide the estimate into three parts.

{\bf Step 1.} An integration by parts gives
\[\int_{B_R}\nabla (u-v)\nabla\varphi=\int_{\partial
B_R}\varphi\frac{\partial\varphi}{\partial r}\\
=\int_{B_R}|\nabla\varphi|^2.
\]

{\bf Step 2.} Take $\bar{u}=\varphi^+$, $\bar{v}=\varphi^-$ in
$B_{R-1}$, $\bar{u}=(R-|x|)\varphi^++(|x|-R+1)u(x)$,
$\bar{v}=(R-|x|)\varphi^-+(|x|-R+1)v(x)$ in $B_R\setminus B_{R-1}$.
Noting that $\bar{u}=u$, $\bar{v}=v$ on $\partial B_R$, hence by the
locally energy minimizing property of $(u,v)$ we get
\begin{eqnarray*}
\int_{B_R}|\nabla u|^2+|\nabla v|^2+u^2v^2 &\leq& \int_{B_R}|\nabla
\bar{u}|^2+|\nabla
\bar{v}|^2+\bar{u}^2\bar{v}^2\\
&=&\int_{B_{R-1}}|\nabla\varphi|^2+\int_{B_R\setminus
B_{R-1}}|\nabla \bar{u}|^2+|\nabla \bar{v}|^2+\bar{u}^2\bar{v}^2.
\end{eqnarray*}
We need to estimate the last integral.

In $B_R\setminus B_{R-1}$, similar to the derivation of \eqref{7.1},
we have
\begin{equation}\label{7.2}
|u-\varphi^+|+|v-\varphi^-|\leq CR^\delta.
\end{equation}
 Combining this with
\eqref{Lip for harmonic extenstion} we get, for $x\in B_R\setminus
B_{R-1}$,
\begin{equation}\label{7.3}
|\nabla\bar{u}(x)|\leq (R-|x|)|\nabla\varphi^+(x)|+(|x|-R+1)|\nabla
u(x)| +\big|\nabla |x|\big||u(x)-\varphi^+(x)|\leq CR^\delta.
\end{equation}
Similarly $|\nabla\bar{v}|\leq CR^\delta$ in  $B_R\setminus
B_{R-1}$.

We claim that in $B_R\setminus B_{R-1}$,
 \begin{equation}\label{7.4}
\bar{u}\bar{v}\leq(\varphi^++u)(\varphi^-+v)\leq\varphi^+v+\varphi^-u+uv\leq
CR^{2\delta}.
\end{equation}
In view of Lemma \ref{lem 7.1}, we only need to estimate
$\varphi^+v$. There are two cases. If
$\varphi^+(x)\in(0,2CR^\delta)$ ($C$ as in \eqref{7.2}), then
because $\varphi^-(x)=0$, \eqref{7.2} implies $v(x)\leq CR^\delta$.
So
\[\varphi^+(x)v(x)\leq 2C^2R^{2\delta}.\]
If $\varphi^+(x)\geq 2CR^{\delta}$, then again by \eqref{7.2},
\[u(x)\geq \varphi^+(x)-CR^\delta\geq\frac{1}{2}\varphi^+(x).\]
So by Lemma \ref{lem 7.1},
\[v(x)\leq\frac{C}{u(x)}\leq\frac{2C}{\varphi^+(x)},\]
which again implies \eqref{7.4}. This finishes the proof of the
claim.

 Combining \eqref{7.3} and \eqref{7.4} we get
\[\int_{B_R\setminus
B_{R-1}}|\nabla \bar{u}|^2+|\nabla \bar{v}|^2+\bar{u}^2\bar{v}^2\leq
CR^{n-1+4\delta}\leq CR^{n-\frac{1}{2}}.\]
 This implies
\[\int_{B_R}|\nabla u|^2+|\nabla v|^2+u^2v^2\leq\int_{B_R}|\nabla
\varphi|^2+CR^{n-\frac{1}{2}}.\]

{\bf Step 3.} Finally, let us estimate
\begin{eqnarray*}
\big|\int_{B_R}\nabla u\nabla v\big|&=&\big|\int_{\partial
B_R}\frac{\partial
u}{\partial r}v -\int_{B_R}v\Delta u \big|\\
&\leq& \int_{\partial B_R}|\nabla u|v +\int_{B_R} v\Delta u\\
&\leq& \int_{B_R} v\Delta u+CR^{n-1}.
\end{eqnarray*}
Here we have used Lemma \ref{lem 7.2} to estimate the boundary
integral.

Denote the measure $\mu=\Delta u dx$. Note that for every $t>0$,
\[\mu(\{v>t\}\cap B_R)\leq\mu(B_R)\leq CR^{n-1}\]
and because in $\{v>t\}$, $u\leq Ce^{-\frac{t}{C}}$ and $\Delta
u\leq Ce^{-\frac{t}{C}}$ for a universal constant $C$ (by Lemma
\ref{estimate exponential decay}),
\[\mu(\{v>t\}\cap B_R)\leq Ce^{-\frac{t}{C}}R^n.\]
Now if $R$ is large enough, by choosing $M=C\log R$ and dividing the
integration into two parts, we get
\begin{eqnarray*}
\int_{B_R}vd\mu&=&\int_0^{+\infty}\mu(\{v>t\}\cap B_R)dt\\
&\leq&\int_0^MCR^{n-1}dt+\int_M^{+\infty}Ce^{-\frac{t}{C}}R^ndt\\
&\leq& CR^{n-1}(\log R+1)\\
&\leq & CR^{n-1/2}.
\end{eqnarray*}

 Putting Step 1 to 3 together we finish the
proof.
\end{proof}

\section{Uniqueness of the asymptotic cone at infinity}
\numberwithin{equation}{section}
\setcounter{equation}{0}

In this section $(u,v)$ is a solution of \eqref{equation} on
$\mathbb{R}^n$, satisfying the linear growth condition \eqref{linear growth} and the minimizing condition \eqref{local energy minimizer}. We will use Proposition \ref{lem comparison of the energy} to refine the results from Section 3.

Fix a base point $x_0$. By Proposition \ref{lem comparison of the
energy}, for every $R\geq 1$,
\[\int_{B_R(x_0)}|\nabla\varphi_{R,x_0}-\nabla\varphi_{2R,x_0}|^2\leq CR^{n-\frac{1}{2}}.\]
Since both $\nabla\varphi_{R,x_0}$ and $\nabla\varphi_{R,x_0}$ are
harmonic functions, we get
\begin{equation}\label{6.1}
\sup\limits_{B_{R/2}(x_0)}|\nabla\varphi_{R,x_0}-\nabla\varphi_{2R,x_0}|\leq
CR^{-\frac{1}{4}}.
\end{equation}
Hence for every fixed $r$ and $i$ large,
\[\sup\limits_{B_r(x_0)}|\nabla\varphi_{2^i,x_0}-\nabla\varphi_{2^{i+1},x_0}|\leq
C2^{-\frac{i}{4}}.\] Adding in $i$ we see
$\lim\limits_{i\to+\infty}\nabla\varphi_{2^i,x_0}=\nabla\varphi_\infty$
exists. By Lemma \ref{lem 6.5}, for any $R\geq 1$,
\[\sup_{B_R(0)}|\nabla\varphi_\infty|\leq CR^{\frac{1}{2}}+C\sum_{i\geq 1}2^{-\frac{i}{4}}
\leq CR^{\frac{1}{2}}.\] Since $\nabla\varphi_\infty$ is an entire
harmonic vector-valued function, by the Liouville theorem, it is a
constant vector function. In conclusion, there exists a constant
vector $e(x_0)$ such that
\[\lim\limits_{i\to+\infty}\nabla\varphi_{2^i,x_0}=e(x_0)\ \ \mbox{uniformly on any compact set of}\ \mathbb{R}^n.\]

Furthermore, by choosing $i_0$ such that $2^{i_0-1}<R\leq 2^i_0$,
adding in $i$ from $i_0$ to $+\infty$ and using \eqref{6.1}, we have
\[\sup\limits_{B_{R/2}(x_0)}|\nabla\varphi_{R,x_0}-e(x_0)|\leq
CR^{-\frac{1}{4}}.\] Substituting this into Proposition \ref{lem
comparison of the energy}, we have for every $R\geq 1$ and
$x_0\in\mathbb{R}^n$,
\begin{equation}
\int_{B_R(x_0)}|\nabla(u-v)-e(x_0)|^2\leq CR^{n-\frac{1}{2}}.
\end{equation}
Combining this fact with the result in Section 3 and Remark \ref{rmk 5.5}, we know
\[\lim\limits_{R\to+\infty}R^{-1}u(x_0+Rx)=(e(x_0)\cdot x)^+,\ \
\ \lim\limits_{R\to+\infty}R^{-1}v(x_0+Rx)=(e(x_0)\cdot x)^+,\]
uniformly on any compact set of $\mathbb{R}^n$.
\par
Next we note that such $e(x_0)$ is independent of the base point
$x_0$. This is because we also have
\[\lim\limits_{R\to+\infty}\frac{u(x_0+Rx)}{R}=
\lim\limits_{R\to+\infty}\frac{u(R(x+\frac{x_0}{R}))}{R}=(e(0)\cdot
x)^+\] uniformly on any compact set of $\mathbb{R}^n$.
So $e(x_0)=e(0)$, which is independent of $x_0$.
\par
In conclusion, we have proved
\begin{prop}\label{prop 6.1}
There exists a vector $e_0$, such that for every $R\geq 1$ and
$x_0\in\mathbb{R}^n$,
\begin{equation}\label{6.2}
\int_{B_R(x_0)}|\nabla(u-v)-e_0|^2\leq CR^{n-\frac{1}{2}}.
\end{equation}
\end{prop}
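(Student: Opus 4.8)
The plan is to use Proposition \ref{lem comparison of the energy} iteratively along dyadic scales to produce, at each base point $x_0$, a limiting constant gradient vector $e(x_0)$ for the harmonic replacements $\varphi_{R,x_0}$, and then to show this vector is independent of $x_0$. First I would fix $x_0$ and apply Proposition \ref{lem comparison of the energy} at radii $R$ and $2R$; since $u-v-\varphi_{R,x_0}$ and $u-v-\varphi_{2R,x_0}$ have the same boundary data on $\partial B_R(x_0)$ up to an $L^2$-controlled error in the gradient, their difference $\varphi_{R,x_0}-\varphi_{2R,x_0}$ is a harmonic function on $B_R(x_0)$ whose Dirichlet energy is $O(R^{n-1/2})$. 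Interior estimates for harmonic functions then upgrade this to the pointwise bound $\sup_{B_{R/2}(x_0)}|\nabla\varphi_{R,x_0}-\nabla\varphi_{2R,x_0}|\le CR^{-1/4}$, which is the key decay estimate \eqref{6.1}.

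Next I would telescope: applying \eqref{6.1} along $R=2^i$ shows that $\{\nabla\varphi_{2^i,x_0}\}_i$ is Cauchy (uniformly on compact sets), hence converges to some harmonic vector field $\nabla\varphi_\infty$. Using the a priori bound of Lemma \ref{lem 6.5} together with the summable dyadic corrections, $\nabla\varphi_\infty$ grows at most like $R^{1/2}$, so by Liouville's theorem it is a constant vector, call it $e(x_0)$. Summing \eqref{6.1} from the dyadic scale just above $R$ to infinity also yields the rate $\sup_{B_{R/2}(x_0)}|\nabla\varphi_{R,x_0}-e(x_0)|\le CR^{-1/4}$, and feeding this back into Proposition \ref{lem comparison of the energy} gives $\int_{B_R(x_0)}|\nabla(u-v)-e(x_0)|^2\le CR^{n-1/2}$.

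Finally, to remove the dependence on $x_0$, I would compare blowing-down limits at two base points: since $R^{-1}u(x_0+Rx)=R^{-1}u(R(x+x_0/R))$ and $x_0/R\to 0$, the linear-growth blowup analysis of Section 3 (and Remark \ref{rmk 5.5}) forces $R^{-1}u(x_0+Rx)\to(e(x_0)\cdot x)^+$ on one hand and $\to(e(0)\cdot x)^+$ on the other, hence $e(x_0)=e(0)=:e_0$. This gives \eqref{6.2} with a single vector $e_0$, completing the proof.

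I expect the main obstacle to be the first step: carefully justifying that the difference $\varphi_{R,x_0}-\varphi_{2R,x_0}$ is controlled in $H^1(B_R(x_0))$ purely from Proposition \ref{lem comparison of the energy}. One must argue that since both $\nabla(u-v-\varphi_{R,x_0})$ and $\nabla(u-v-\varphi_{2R,x_0})$ are small in $L^2(B_R(x_0))$, their difference $\nabla(\varphi_{R,x_0}-\varphi_{2R,x_0})$ is small in $L^2(B_R(x_0))$, and then pass from this interior energy bound to the pointwise gradient bound via the mean-value property for the harmonic function $\varphi_{R,x_0}-\varphi_{2R,x_0}$; the exponent bookkeeping ($n-1/2$ in energy becoming $-1/4$ pointwise, and $-1/4$ being summable over dyadic scales) is where all the weight of the argument sits.
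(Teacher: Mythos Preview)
Your proposal is correct and follows essentially the same route as the paper: compare $\nabla\varphi_{R,x_0}$ with $\nabla\varphi_{2R,x_0}$ via Proposition \ref{lem comparison of the energy} and the triangle inequality, upgrade the $L^2$ bound to the pointwise decay \eqref{6.1} by interior estimates for harmonic functions, telescope over dyadic scales, invoke Lemma \ref{lem 6.5} and Liouville to identify the limit as a constant vector $e(x_0)$, feed back into Proposition \ref{lem comparison of the energy}, and finally use the blow-down analysis of Section 3 / Remark \ref{rmk 5.5} to show $e(x_0)=e(0)$. The ``main obstacle'' you flag is exactly the step the paper handles first, and your exponent bookkeeping matches.
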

By Remark \ref{rmk 5.5}, we can take $e_0$ such that $|e_0|=1$. In
the following of this paper we will assume $e_0=e_n$, the $n$-th
coordinate direction.

\section{Existence of a cone of monotonicity}
\numberwithin{equation}{section}
 \setcounter{equation}{0}

In this section $(u,v)$ denotes a solution of \eqref{equation} on
$\mathbb{R}^n$, satisfying the linear growth condition \eqref{linear growth} and the minimizing condition \eqref{local energy minimizer}. We will combine the sliding method (in the spirit of \cite{F}) with results from previous sections to establish the existence of a cone of monotonicity for $u$ and $v$.

In the following we denote $x=(x^\prime,x_n)$ where
$x^\prime=(x_1,\cdots,x_{n-1})$.
\begin{lem}\label{lem 9.1}
There exists a $M>0$ such that if $|u(x_0)-v(x_0)|\geq M$, then
\[|\nabla(u-v)(x_0)-e_n|\leq\frac{1}{4}.\]
\end{lem}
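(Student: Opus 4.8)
The plan is to argue by contradiction with a blow-down centered at the nearest point of the free boundary $\{u=v\}$, and then to upgrade the uniform convergence supplied by Section~3 to $C^1$ convergence in the region where the blow-down limit is nondegenerate. By replacing $(u,v)$ with $(v,u)$ — which turns the asymptotic direction $e_n$ into $-e_n$ and $\nabla(u-v)$ into $-\nabla(u-v)$ — it suffices to produce $M$ such that $u(x_0)-v(x_0)\ge M$ forces $|\nabla(u-v)(x_0)-e_n|\le 1/4$. So suppose not: there are $x_k$ with $u(x_k)-v(x_k)\ge k\to+\infty$ and $|\nabla(u-v)(x_k)-e_n|>1/4$. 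Since $u-v$ is globally Lipschitz (Theorem~\ref{thm 6.1}) and vanishes on $\{u=v\}$, the distance $D_k:=\operatorname{dist}(x_k,\{u=v\})$ satisfies $D_k\ge k/C\to+\infty$; pick $x_k'\in\{u=v\}$ attaining it and set $u_k(x)=D_k^{-1}u(x_k'+D_kx)$, $v_k(x)=D_k^{-1}v(x_k'+D_kx)$, which solve \eqref{equation scaled} with $\kappa_k=D_k^4\to+\infty$. Rescaling Corollary~\ref{coro 5.10} gives $\sup_{B_R(0)}(u_k+v_k)\le C(1+R)$; the $u_k,v_k$ keep the global Lipschitz bound; and rescaling Proposition~\ref{prop 6.1} gives $\int_{B_R(0)}|\nabla(u_k-v_k)-e_n|^2\le CR^{n-\frac12}D_k^{-\frac12}\to 0$ for each fixed $R$. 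Hence, exactly as in Section~3 (uniform H\"older bounds plus the main results of \cite{DWZ2011,TT2011}), a subsequence satisfies $(u_k,v_k)\to(u_\infty,v_\infty)$ uniformly on compact sets with $u_\infty v_\infty\equiv 0$, $u_\infty(0)=v_\infty(0)=0$ and $\nabla(u_\infty-v_\infty)\equiv e_n$; therefore $u_\infty=(e_n\cdot x)^+$, $v_\infty=(e_n\cdot x)^-$.

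Now track the rescaled base point $z_k:=D_k^{-1}(x_k-x_k')\in\partial B_1(0)$. Because $x_k'$ is a nearest free-boundary point, $\operatorname{dist}(z_k,\{u_k=v_k\})=1$, and since $u_\infty-v_\infty=e_n\cdot x$ has nonvanishing gradient, the zero sets $\{u_k=v_k\}$ converge, in the Hausdorff sense on compact sets, to $\{e_n\cdot x=0\}$; thus any subsequential limit $z_\infty$ of $z_k$ satisfies $\operatorname{dist}(z_\infty,\{e_n\cdot x=0\})=1$, i.e.\ $|e_n\cdot z_\infty|=1$. As $(u_k-v_k)(z_k)=(u-v)(x_k)/D_k>0$, passing to the limit gives $e_n\cdot z_\infty\ge 0$, so $z_\infty=e_n$. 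Consequently a fixed ball $B_\rho(z_k)$ lies, for $k$ large, in $\{e_n\cdot x\ge\frac12\}$, where $u_\infty\ge\frac12$; by the uniform convergence, $\frac14\le u_k\le C$ there.

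On such a ball $\Delta v_k=\kappa_k v_k u_k^2\ge\frac1{16}\kappa_k v_k$, so Lemma~\ref{estimate exponential decay} gives $v_k\le Ce^{-c\kappa_k^{1/2}}$ on a slightly smaller concentric ball, whence $\Delta u_k=\kappa_k u_k v_k^2\le C\kappa_k e^{-2c\kappa_k^{1/2}}\to 0$ and $\Delta v_k\le C\kappa_k e^{-c\kappa_k^{1/2}}\to 0$ uniformly there. Since $u_k,v_k$ are uniformly bounded with right-hand sides tending to $0$, interior elliptic estimates upgrade the convergences $u_k\to e_n\cdot x$ and $v_k\to 0$ to convergence in $C^1$ near $z_\infty$; in particular $\nabla(u_k-v_k)(z_k)\to e_n$. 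But $\nabla(u_k-v_k)(z_k)=\nabla(u-v)(x_k)$, contradicting $|\nabla(u-v)(x_k)-e_n|>1/4$, and the lemma follows.

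The delicate step is this last upgrade, from an $L^2$ (equivalently, uniform) bound on $\nabla(u-v)$ to a pointwise statement at $x_k$: it succeeds only because $z_\infty=e_n$ lies strictly away from the singular hyperplane $\{e_n\cdot x=0\}$, so that there $v$ is exponentially small and $u$ essentially harmonic; near the free boundary no such pointwise gradient control is available, which is precisely why the argument had to be carried to the nearest free-boundary point and one had to check $z_\infty=e_n$. A compactness-free variant is possible — first show $\operatorname{dist}(x_0,\{u=v\})\le CM$ (again by blow-down), deduce $u(x_0)\le CM$ from Corollary~\ref{coro 5.10}, then prove $v\le Ce^{-cM^2}$ and $|\nabla\Delta(u-v)|\le CM^{O(1)}e^{-cM^2}$ on $B_{cM}(x_0)$ and combine the resulting near-harmonicity of $\nabla(u-v)$ with the mean-value form of Proposition~\ref{prop 6.1} — but the route above is cleaner.
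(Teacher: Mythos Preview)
Your proof is correct, but it takes a genuinely different (and longer) route than the paper's.

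The paper rescales directly at $x_0$ by the value $A=u(x_0)-v(x_0)$, setting $\widetilde{u}(x)=A^{-1}u(x_0+Ax)$, $\widetilde{v}(x)=A^{-1}v(x_0+Ax)$. On a fixed ball $B_{1/C}(0)$ the estimates $uv\le C$ (Lemma~\ref{lem 7.1}) and the global Lipschitz bound make $\Delta\widetilde u$ and $\Delta\widetilde v$ exponentially small in $A$, while rescaling Proposition~\ref{prop 6.1} gives $\int_{B_{1/C}(0)}|\nabla(\widetilde u-\widetilde v)-e_n|^2\le CA^{-1/4}$. A single $W^{2,p}$ estimate plus Sobolev embedding then yields the pointwise bound at the center. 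No contradiction argument, no free-boundary tracking, no Hausdorff convergence.

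By contrast, you go to the nearest free-boundary point, blow down there, identify the limit as $((e_n\cdot x)^+,(e_n\cdot x)^-)$ via the $L^2$ gradient estimate, and then must verify that the rescaled base point $z_k$ converges to $e_n$ (away from the limiting free boundary) before you can invoke exponential decay and upgrade to $C^1$. This works, and your argument for $z_\infty=e_n$ via Hausdorff convergence of $\{u_k=v_k\}$ is fine, but it is considerably more involved. The ``compactness-free variant'' you sketch at the end is essentially the paper's method: rescaling by the value rather than by the distance to $\{u=v\}$ sidesteps the whole issue of locating $z_\infty$, because the center itself already sits in the nondegenerate region after rescaling. What your approach buys is that it never uses Lemma~\ref{lem 7.1} directly (you rederive the needed exponential decay from scratch via Lemma~\ref{estimate exponential decay}), so it is slightly more self-contained; what the paper's approach buys is brevity and a quantitative, non-asymptotic statement.
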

\begin{proof}
Assume $u(x_0)-v(x_0)=A>M$. Denote
\[\widetilde{u}(x)=A^{-1}u(x_0+Ax),\ \ \ \ \widetilde{v}(x)=A^{-1}v(x_0+Ax).\]
Then by Lemma \ref{lem 7.1} and the Lipschitz continuity of $u$ and
$v$, in $B_{1/C}(0)$,
\[\Delta\widetilde{u}(x)=A^4\widetilde{u}(x)\widetilde{v}(x)^2\leq
Ce^{-\frac{\sqrt{A}}{C}},\ \ \ \
\Delta\widetilde{v}(x)=A^4\widetilde{u}(x)^2\widetilde{v}(x)\leq
Ce^{-\frac{\sqrt{A}}{C}}.\] By \eqref{6.2}, we also have
\[\int_{B_{1/C}(0)}|\nabla(\widetilde{u}-\widetilde{v})-e_n|^2\leq CA^{-\frac{1}{4}}.\]
 If $M$ is large
enough, the $W^{2,p}$ estimate (for $p>n$) and the Sobolev embedding
theorem imply
\[|\nabla(\widetilde{u}-\widetilde{v})(0)-e_n|\leq\frac{1}{4}.\]
Rescaling back to $u$ and $v$ we get the claim.
\end{proof}
The constant $1/4$ can be made arbitrarily small if we choose $M$
large enough.

\begin{coro}\label{coro 9.2}
If $u(x)-v(x)\geq M$, then $|\nabla u(x)-e_n|\leq 1/2$. If
$v(x)-u(x)\geq M$, then $|\nabla v(x)-e_n|\leq 1/2$.
\end{coro}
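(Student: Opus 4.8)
The plan is to deduce this from Lemma \ref{lem 9.1} together with the exponential decay estimate (Lemma \ref{estimate exponential decay}) and the equations. Suppose first that $u(x)-v(x)\geq M$. By Lemma \ref{lem 9.1} (with $M$ chosen large enough), we have $|\nabla(u-v)(x)-e_n|\leq 1/4$, so it suffices to show that $|\nabla v(x)|$ is small, say $\leq 1/4$, since then $|\nabla u(x)-e_n|\leq|\nabla(u-v)(x)-e_n|+|\nabla v(x)|\leq 1/2$.

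To bound $|\nabla v(x)|$, I would argue exactly as in the proof of Lemma \ref{lem 7.2}. Since $|\nabla u|\leq C$ for a universal constant (Theorem \ref{thm 6.1}), and since $v\leq C$ follows from Lemma \ref{lem 7.1} wherever $u$ is large (indeed, $v(x)\geq M-C>0$ would force, via $\Delta u\geq (M-C)^2 u$ applied after swapping roles, a contradiction with the size of $u$; more simply, the proof of Lemma \ref{lem 7.1} shows $v\leq Ce^{-u/C}$ pointwise where $u$ is large), we get that in $B_1(x)$ the function $v$ satisfies $v\leq Ce^{-(A-C)/C}$ where $A=u(x)-v(x)\geq M$ and $u\geq A-C>0$ there. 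Then $\Delta v=vu^2\leq Ce^{-A/C}$ in $B_1(x)$, and the interior gradient estimate for the Poisson equation gives $|\nabla v(x)|\leq C(\sup_{B_1(x)}v+\sup_{B_1(x)}|\Delta v|)\leq Ce^{-A/C}$. Choosing $M$ large enough that $Ce^{-M/C}\leq 1/4$ (and simultaneously large enough for Lemma \ref{lem 9.1}) yields $|\nabla u(x)-e_n|\leq 1/2$.

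The case $v(x)-u(x)\geq M$ is symmetric: Lemma \ref{lem 9.1} gives $|\nabla(u-v)(x)-e_n|\leq 1/4$, equivalently $|\nabla(v-u)(x)+e_n|\leq 1/4$, and the same exponential-decay argument with the roles of $u$ and $v$ interchanged gives $|\nabla u(x)|\leq Ce^{-(v(x)-u(x))/C}\leq 1/4$, whence $|\nabla v(x)-e_n|\leq|\nabla(v-u)(x)-(-e_n)|$... wait, sign: from $|\nabla(u-v)(x)-e_n|\leq 1/4$ we should instead write the estimate for the dominant component directly. Since here $v>u$ and we expect $v$ to behave like $e_n\cdot x$ near such points only after noting the asymptotic cone is $((e_n\cdot x)^+,(e_n\cdot x)^-)$; in the region $v>u$ the relevant half is $v\sim (e_n\cdot x)^-=(-e_n\cdot x)^+$, so one re-examines Lemma \ref{lem 9.1}'s conclusion as $|\nabla(u-v)(x)-e_n|\leq 1/4$ meaning $\nabla v(x)$ is close to $-e_n$ once $\nabla u(x)$ is negligible — but the statement claims $|\nabla v(x)-e_n|\leq 1/2$, which is consistent only if one tracks that in this regime $\nabla(u-v)\approx e_n$ forces $\nabla v\approx \nabla u - e_n$, and with $\nabla u$ small this is $\approx -e_n$; so the corollary as literally stated would need $|\nabla v(x)+e_n|\leq 1/2$. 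I expect the main obstacle is precisely this bookkeeping of signs, and I would resolve it by following Lemma \ref{lem 9.1} verbatim and reading off the correct sign for the surviving gradient, concluding $|\nabla v(x)-e_n|\leq 1/2$ in the form intended by the author (with the orientation consistent with $\eqref{6.2}$ and $e_0=e_n$).
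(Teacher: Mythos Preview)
Your approach for the first case is essentially the paper's: the paper simply cites Lemma~\ref{lem 7.2} (which gives $u|\nabla v|\leq C$, hence $|\nabla v(x)|\leq C/u(x)\leq C/M$ since $u(x)>M$ follows from $u(x)-v(x)\geq M$ and $v>0$), then combines this with Lemma~\ref{lem 9.1} via the triangle inequality exactly as you do. You re-derive the content of Lemma~\ref{lem 7.2} rather than citing it, but the argument is identical.

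Your confusion about the sign in the second case is well-founded: the statement as printed contains a typo. The correct conclusion in the region $v-u\geq M$ is $|\nabla v(x)+e_n|\leq 1/2$, not $|\nabla v(x)-e_n|\leq 1/2$. Indeed, since $\nabla(u-v)\approx e_n$ by Lemma~\ref{lem 9.1} and $|\nabla u|$ is small there (Lemma~\ref{lem 7.2} with the roles of $u$ and $v$ swapped), one obtains $\nabla v\approx -e_n$. This corrected form is exactly what the paper needs downstream: in the proof of Proposition~\ref{lem 9.5} one requires $\tau\cdot\nabla v\leq 0$ for $\tau\in C(e_n,3/4)$ in $\{u-v<-M\}$, which follows from $|\nabla v+e_n|\leq 1/2$ but would be false with the sign as literally stated. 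So your instinct was right; do not try to reconcile your computation with the printed sign.
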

\begin{proof}
Assume $M$ is large enough. If $u(x)-v(x)\geq M$, then by Lemma
\ref{lem 7.2}, $|\nabla v(x)|\leq1/4$. Hence
 \[|\nabla u(x)-e_n|\leq|\nabla(u-v)(x)-e_n|+|\nabla v(x)|\leq 1/2.\qedhere\]
\end{proof}
\begin{coro}
For $|t|>M$, $\{u-v=t\}$ is a Lipschitz graph of the form
$\{x_n=f(x^\prime,t)\}$, with the Lipschitz constant of $f$ (with
respect to $x^\prime$) less than $4$.
\end{coro}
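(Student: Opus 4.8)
The claim to establish is that for $|t| > M$, the level set $\{u - v = t\}$ is a Lipschitz graph $\{x_n = f(x', t)\}$ with Lipschitz constant (in $x'$) at most $4$. The plan is to combine the gradient control near the level set with an implicit function / monotonicity argument. First I would fix $t$ with $t > M$ (the case $t < -M$ being symmetric after swapping $u$ and $v$), and observe that by Corollary \ref{coro 9.2}, at every point $x$ with $u(x) - v(x) = t > M$ we have $|\nabla u(x) - e_n| \leq 1/2$, and by Lemma \ref{lem 7.2} the gradient of $v$ is small there, so in particular $\partial_{x_n}(u-v)(x) \geq 1 - |\nabla(u-v)(x) - e_n| \geq 1/2 > 0$ (using Lemma \ref{lem 9.1} to bound $|\nabla(u-v) - e_n| \leq 1/4$). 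This strict positivity of the $x_n$-derivative on the level set is the key structural fact.

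Next I would use this to run the implicit function theorem locally: since $\partial_{x_n}(u - v) > 0$ on $\{u - v = t\}$, around each such point the level set is locally a smooth graph $x_n = f(x', t)$, and the gradient bound gives control of $\nabla_{x'} f$. Indeed, differentiating the relation $u(x', f(x',t)) - v(x', f(x',t)) = t$ gives
\[
\nabla_{x'} f = -\frac{\nabla_{x'}(u - v)}{\partial_{x_n}(u - v)},
\]
so $|\nabla_{x'} f| \leq \frac{|\nabla_{x'}(u-v)|}{\partial_{x_n}(u-v)} \leq \frac{|\nabla(u-v) - e_n|}{1 - |\nabla(u-v) - e_n|} \leq \frac{1/4}{3/4} = \frac{1}{3} < 4$, which in fact gives a much better constant than claimed (the constant $4$ is a safe overestimate, presumably to absorb the slightly weaker bound $1/2$ from Corollary \ref{coro 9.2} if one prefers to avoid invoking Lemma \ref{lem 9.1}; either way the Lipschitz bound follows).

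Finally I would globalize: for the level set to be a single global graph over $\mathbb{R}^{n-1}$ one needs (i) that for each $x' \in \mathbb{R}^{n-1}$ there is exactly one $x_n$ with $u - v = t$, and (ii) that this $x_n$ depends continuously (hence, by the local computation, Lipschitz-continuously) on $x'$. Uniqueness along each vertical line follows from strict monotonicity: if $u - v = t$ at two points on the line $\{x' = \text{const}\}$ then $\partial_{x_n}(u-v) \geq 1/2 > 0$ at both, but between them, on the segment where $u - v \leq t$ near the lower point and $\geq t$... more carefully, one uses that wherever $u - v \in [t - \epsilon, t + \epsilon]$ we still have the gradient bound (since $t - \epsilon > M$ for small $\epsilon$), forcing $u - v$ to be strictly increasing in $x_n$ on the whole slab $\{|u - v - t| \le \epsilon\}$, hence crossing level $t$ at most once per vertical line. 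Existence of a crossing for every $x'$ follows from the asymptotic behavior established in Proposition \ref{prop 6.1} / Section 3: as $x_n \to +\infty$ along a fixed vertical line, $u - v \to +\infty$ (since $u - v$ is asymptotic to $x_n$), and as $x_n \to -\infty$, $u - v \to -\infty$, so by continuity every value $t$ is attained. The main obstacle is this last global step — one must be careful that the blow-down convergence to $(e_n \cdot x)^+ - (e_n \cdot x)^-= x_n$ is uniform enough on vertical lines to guarantee $u - v$ actually ranges over all of $\mathbb{R}$ on each line; this uses the energy estimate \eqref{6.2} with base point moving along the line, combined with interior gradient bounds from Theorem \ref{thm 6.1}, exactly as in the discussion preceding Proposition \ref{prop 6.1}.
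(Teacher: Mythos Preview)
Your proposal is correct and follows essentially the same approach as the paper. The only cosmetic difference is that the paper packages the gradient bound $|\nabla(u-v)-e_n|\leq 1/4$ from Lemma~\ref{lem 9.1} as a \emph{cone of monotonicity directions} $C(e_n,1/4)=\{e:|e|=1,\ e\cdot e_n\geq 1/4\}$ and reads off the Lipschitz bound geometrically, whereas you use implicit differentiation to compute $\nabla_{x'}f$; these are equivalent formulations of the same fact. One small clean-up: your slab argument for uniqueness is a bit loose as written (the vertical segment between two putative crossings could exit the slab $\{|u-v-t|\leq\varepsilon\}$); the paper's continuation argument in the whole region $\{u-v>M\}$ --- once $u-v>t$ just above a crossing, monotonicity throughout $\{u-v>M\}$ keeps it above $t$ forever --- is the cleaner way to finish, and is what your sketch is gesturing toward.
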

\begin{proof}
We need to note that for every
$x_0=(x_0^\prime,0)\in\mathbb{R}^{n-1}$,
\[\lim\limits_{R\to+\infty}R^{-1}\left(u(x_0+Rx)-v(x_0+Rx)\right)=x_n.\]
Hence
\[\lim\limits_{x_n\to+\infty}\left(u(x_0^\prime,x_n)-v(x_0^\prime,x_n)\right)=+\infty,\ \ \ \
\lim\limits_{x_n\to-\infty}\left(u(x_0^\prime,x_n)-v(x_0^\prime,x_n)\right)=-\infty.\]
Since
 $u-v$ is strictly increasing in the direction of $e_n$ in
 $\{|u-v|>M\}$,
for every $|t|>M$ and $x^\prime\in\mathbb{R}^{n-1}$, there is a
unique finite $f(x^\prime,t)$ such that
$(u-v)(x^\prime,f(x^\prime,t))=t$. \footnote{ For example, if
$u(x^\prime,x_n)-v(x^\prime,x_n)=t>M$, then
$\frac{\partial(u-v)}{\partial x_n}(x^\prime, x_n)>0$, so
$u(x^\prime,y_n)-v(x^\prime,y_n)>t$ for those $y_n>x_n$ close to
$x_n$. Then by continuation using the monotonicity of $u-v$ in
$\{u-v>M\}$, $u(x^\prime,y_n)-v(x^\prime,y_n)>t$ for all $y_n>x_n$.}

There is a cone of directions
$$C(e_n,1/4):=\{e\in\mathbb{R}^n,
|e|=1, e\cdot e_n\geq 1/4\},$$ such that for every $e\in
C(e_n,1/4)$, and $x\in\{|u-v|\geq M\}$,
\[\nabla(u-v)(x)\cdot e\geq 0.\]
That is, in $\{|u-v|\geq M\}$, $u-v$ is monotone increasing along
the directions in $C(e_n,1/4)$. This gives the Lipschitz continuity
of $f$.
\end{proof}
In the following we denote the level set $\{u-v=t\}$ by $\Gamma_t$.
By the previous lemma, for $|t|\geq M$,
$\Gamma_t=\{x_n=f(x^\prime,t)\}$. The following result states that
the width (in the $x_n$ direction) of $\{|u-v|<M\}$ is bounded.
\begin{lem}\label{lem 9.4}
There is a universal constant $C$, if $M$ is large enough, then
\[\sup_{x^\prime\in\mathbb{R}^{n-1}}\left(f(x^\prime,M)-f(x^\prime,-M)\right)\leq CM.\]
\end{lem}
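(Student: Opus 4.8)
The plan is to argue by contradiction through a blow-down centred at points of $\{u=v\}$ that lie in a thick part of the transition layer, playing this against the quantitative flatness of Proposition~\ref{prop 6.1}. First note that for every $x'$ the slice $\{x_n:\ |u(x',x_n)-v(x',x_n)|<M\}$ equals the open interval $(f(x',-M),f(x',M))$, and in particular $f(x',M)>f(x',-M)$: indeed $\partial_{x_n}(u-v)\geq\frac12$ on $\{|u-v|\geq M\}$ by Lemma~\ref{lem 9.1} (whose constant is made small by enlarging $M$), and $f(x',\pm M)$ is the unique height at which $u-v$ equals $\pm M$, so $u-v<-M$ below $f(x',-M)$ and $u-v>M$ above $f(x',M)$.

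Suppose the claim fails. Then for each $k$ there are $M_k\geq k$ and $x_k'\in\mathbb{R}^{n-1}$ with $w_k:=f(x_k',M_k)-f(x_k',-M_k)>kM_k$; hence $w_k\to+\infty$ and $M_k/w_k<1/k\to0$. By the intermediate value theorem choose $\sigma_k\in(f(x_k',-M_k),f(x_k',M_k))$ with $\xi_k:=(x_k',\sigma_k)\in\{u=v\}$; then $u(\xi_k)=v(\xi_k)\leq C$ by Lemma~\ref{lem 7.1}. Put $r_k:=w_k$ and
\[\widetilde u_k(x):=r_k^{-1}u(\xi_k+r_kx),\qquad \widetilde v_k(x):=r_k^{-1}v(\xi_k+r_kx),\]
so that $(\widetilde u_k,\widetilde v_k)$ solves \eqref{equation scaled} with $\kappa_k=r_k^4\to+\infty$. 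By Corollary~\ref{coro 5.10}, $\sup_{B_R(0)}(\widetilde u_k+\widetilde v_k)\leq C(R+r_k^{-1})\leq CR$ for $R\geq1$, so by subharmonicity and Theorem~\ref{lem uniform Holder} the pair $(\widetilde u_k,\widetilde v_k)$ is bounded in $C^\alpha_{loc}$, and (exactly as in Section~3) a subsequence converges, uniformly on compact sets and weakly in $H^1_{loc}$, to a pair $(\widetilde u_\infty,\widetilde v_\infty)$ with $\widetilde u_\infty\widetilde v_\infty\equiv0$ and $\widetilde u_\infty-\widetilde v_\infty$ harmonic. Rescaling \eqref{6.2} (with $e_0=e_n$) gives $\int_{B_R(0)}|\nabla(\widetilde u_k-\widetilde v_k)-e_n|^2\leq CR^{n-1/2}r_k^{-1/2}\to0$, hence $\nabla(\widetilde u_\infty-\widetilde v_\infty)=e_n$ a.e. by weak lower semicontinuity; since $(\widetilde u_\infty-\widetilde v_\infty)(0)=\lim r_k^{-1}(u(\xi_k)-v(\xi_k))=0$, this forces $\widetilde u_\infty-\widetilde v_\infty=x_n$, so $\widetilde u_\infty=x_n^+$ and $\widetilde v_\infty=x_n^-$. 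As this limit is unique, $\widetilde u_k-\widetilde v_k\to x_n$ uniformly on $\overline{B_2(0)}$.

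The contradiction is now geometric. Under $x\mapsto\xi_k+r_kx$ the slice $\{x_k'\}\times(f(x_k',-M_k),f(x_k',M_k))$ becomes $I_k:=\{0\}\times\bigl((f(x_k',-M_k)-\sigma_k)/r_k,\ (f(x_k',M_k)-\sigma_k)/r_k\bigr)$, which contains the origin, has length $w_k/r_k=1$, and lies in $\{x'=0,\ |x_n|\leq1\}\subset\overline{B_2(0)}$; on $I_k$ one has $|\widetilde u_k-\widetilde v_k|=r_k^{-1}|u-v|<M_k/r_k$. For $k$ large, $M_k/r_k<\frac1{100}$ and $\|\widetilde u_k-\widetilde v_k-x_n\|_{L^\infty(\overline{B_2(0)})}<\frac1{100}$, so each $(0,s)\in I_k$ satisfies $|s|<\frac1{50}$; thus a segment of length $1$ sits inside $\{|x_n|<\frac1{50}\}$, which is absurd. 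This contradiction proves the bound, with $C$ depending only on the universal constants in \eqref{6.2}, Corollary~\ref{coro 5.10} and Lemma~\ref{lem 7.1}.

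The main difficulty is the second paragraph: one must know that the uniform bound on $(\widetilde u_k,\widetilde v_k)$, the compactness, and above all the flatness estimate \eqref{6.2} hold with constants independent of the moving base point $\xi_k$ — that is, one needs genuinely base-point-free (universal) versions of Corollary~\ref{coro 5.10} and Proposition~\ref{prop 6.1}. Granting this, the rest is routine.
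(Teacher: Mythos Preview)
Your proof is correct, but it follows a different route from the paper's. The paper argues directly: it fixes $x_0=(x_0',f(x_0',M))\in\Gamma_M$, blows down at scale $M$ (not at the width scale), and uses Lemma~\ref{lem 6.2} together with Proposition~\ref{prop 6.1} to show that $\frac{1}{M}(u-v)(x_0+Mx)$ is $C^0$-close to $x_n+1$ on $B_3(0)$; this immediately produces a point of $\Gamma_{-M}$ in $B_{3M}(x_0)$. The remaining work is to transfer this nearby point back to the vertical line through $x_0'$, which the paper does using the cone of monotonicity already established in $\{u-v\leq -M\}$ (the Lipschitz graph structure of $\Gamma_{-M}$). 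By contrast, you argue by contradiction and compactness at the putatively large width scale $w_k$: the rescaled pair converges to $(x_n^+,x_n^-)$ thanks to the quantitative decay in \eqref{6.2}, while the transition segment of rescaled length $1$ is squeezed into $\{|x_n|<1/50\}$. Your approach is ``softer'' and sidesteps the second half of the paper's argument (no use of the Lipschitz graph or cone structure of $\Gamma_{\pm M}$), at the cost of being non-constructive and requiring the compactness machinery of Section~3. Both approaches rest on the same base-point-free inputs, Corollary~\ref{coro 5.10} and Proposition~\ref{prop 6.1}, which as you note are indeed stated with universal constants.
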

\begin{proof}
Take an arbitrary $x_0=(x_0^\prime,f(x_0^\prime,M))\in\Gamma_M$. By
Lemma \ref{lem 6.2} and Proposition \ref{prop 6.1}, if $M$ is large
enough (but independent of $x_0$),
\[\sup_{B_3(0)}\big|\frac{1}{M}u(x_0+Mx)-\frac{1}{M}v(x_0+Mx)-x_n-1\big|\leq 1.\]
Hence
\[\inf_{B_3(0)}\left[\frac{1}{M}u(x_0+Mx)-\frac{1}{M}v(x_0+Mx)\right]\leq -3+1+1\leq -1.\]
That is, $B_{3M}(x_0)\cap\{u-v\leq-M\}\neq\emptyset$, or
$\text{dist}(x_0,\Gamma_{-M})\leq 3M$.
\par
Take $y_0=(y^\prime_0,f(y^\prime_0,-M))\in
B_{3M}(x_0)\cap\Gamma_{-M}$. By the monotonicity of $u-v$ in the
direction of $C(e_n,1/4)$ in $\{u-v\leq-M\}$, for a sufficiently
large universal constant $C$,
\[\{(x^\prime,x_n):x_n<
f(y^\prime_0,-M)-C|x^\prime-y^\prime_0|\}\subset\{u-v<-M\}.\] Thus
for $(x^\prime_0, x_n)$ such that
\[x_n\leq f(y^\prime_0,-M)-C|x^\prime_0-y^\prime_0|,\]
we have $(x^\prime_0, x_n)\in\{u-v<-M\}$. In other words,
\[f(x^\prime_0,-M)\geq f(y^\prime_0,-M)-C|x^\prime_0-y^\prime_0|.\]
Note that
\[|f(y^\prime_0,-M)-f(x_0^\prime,M)|+|x^\prime_0-y^\prime_0|\leq 3M.\]
Combining these two inequalities we get
\[f(x^\prime_0,-M)\geq f(x_0^\prime,M)-CM.\qedhere\]
\end{proof}

\begin{prop}\label{lem 9.5}
For each $\tau\in C(e_n,3/4)$, we have
\[u_\tau=\tau\cdot\nabla u\geq 0;\ \ \ \ v_\tau=\tau\cdot\nabla v\leq 0~~\text{in}~~\mathbb{R}^n.\]
\end{prop}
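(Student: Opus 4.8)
The plan is to prove the stronger pointwise statement that for every $\tau\in C(e_n,3/4)$ and every $s>0$,
\[
u(x+s\tau)\geq u(x)\qquad\text{and}\qquad v(x+s\tau)\leq v(x)\qquad\text{for all }x\in\mathbb{R}^n,
\]
and then to divide by $s$ and let $s\to 0^+$, which gives $u_\tau\geq 0$ and $v_\tau\leq 0$. Fix such a $\tau$, write $u^s=u(\cdot+s\tau)$, $v^s=v(\cdot+s\tau)$, and set
\[
w^s=u^s-u,\qquad z^s=v-v^s .
\]
Subtracting the equations for $u,v$ from those for $u^s,v^s$ gives the linear system
\begin{align*}
-\Delta w^s+(v^s)^2\,w^s &= u\,(v^s+v)\,z^s,\\
-\Delta z^s+u^2\,z^s &= v^s\,(u+u^s)\,w^s,
\end{align*}
whose diagonal coefficients are nonnegative and whose off-diagonal coefficients $u(v^s+v)$ and $v^s(u+u^s)$ are nonnegative as well (and locally bounded, since $u,v$ are smooth). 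Thus wherever $z^s\geq 0$ the nonnegative-once-we-know-it quantity $w^s$ satisfies the supersolution inequality $-\Delta w^s+(v^s)^2w^s\geq 0$, and symmetrically with $w^s,z^s$ interchanged. This is the order-preserving structure on which the sliding method rests, and the whole point will be to break the circularity between the two signs.

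\textbf{Step 1: monotonicity for large translations.} I would first produce $s_0>0$ with $w^s\geq 0$ and $z^s\geq 0$ on $\mathbb{R}^n$ for all $s\geq s_0$. On $\Omega^+=\{u-v\geq M\}$, Lemma~\ref{lem 9.1} and Corollary~\ref{coro 9.2} give $\partial_\tau(u-v)\geq 1/2$ and $\partial_\tau u\geq 1/4$, and on $\Omega^-=\{u-v\leq -M\}$ they give $\partial_\tau(u-v)\geq 1/2$ and $v$ strictly decreasing along $\tau$; in particular, once $x$ lies in $\Omega^\pm$ the whole segment $x+[0,s]\tau$ stays there. On the complementary slab $\{|u-v|\leq M\}$, Lemma~\ref{lem 9.4} bounds its width in the $e_n$-direction and Corollary~\ref{coro 5.10} gives $u+v\leq C_M$; since the level sets $\{u-v=\pm M\}$ are Lipschitz graphs over $\mathbb{R}^{n-1}$ with small slope (again by Lemma~\ref{lem 9.1}), a segment $x+[0,s]\tau$ with $s$ large crosses the slab within a uniformly bounded parameter interval and then penetrates $\Omega^+$ far enough that $u^s$ exceeds $C_M$. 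Splitting according to whether $x$ lies in $\Omega^+$, in the slab, or in $\Omega^-$, one then obtains $w^s\geq 0$ on $\Omega^+$ and on the slab directly; on $\Omega^-$ one first gets $z^s\geq 0$ from the monotonicity of $v$ along $\tau$ (together with the control of $v$ near $\{u-v=-M\}$ and the fact that $(v^s)^2\to+\infty$ deep inside $\Omega^-$), and then deduces $w^s\geq 0$ from its supersolution inequality with boundary data on $\{u-v=-M\}$ coming from the slab. The symmetric discussion yields $z^s\geq 0$. Hence $[s_0,\infty)\subset T$, where $T=\{s>0:\ w^s\geq 0\text{ and }z^s\geq 0\text{ on }\mathbb{R}^n\}$.

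\textbf{Step 2: sliding down, and the main obstacle.} The set $T$ is closed in $(0,\infty)$ because $u(\cdot+s\tau),v(\cdot+s\tau)$ depend continuously on $s$ in $C^2_{loc}$. It remains to prove $T$ is open; then $T=(0,\infty)$ and we are done. If $s_1\in T$, then $w^{s_1}\geq 0$ and $-\Delta w^{s_1}+(v^{s_1})^2w^{s_1}\geq 0$, so by the strong maximum principle either $w^{s_1}\equiv 0$ or $w^{s_1}>0$ everywhere; the first alternative would make $u$ periodic along $\tau$, contradicting $R^{-1}u(x_0+R\tau)\to\tau_n\geq 3/4>0$ (Proposition~\ref{prop 6.1}), and likewise $z^{s_1}>0$ everywhere, since $v\to+\infty$ along $-\tau$. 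The difficulty — and the main obstacle of the whole proof — is that strict positivity on all of $\mathbb{R}^n$ does not by itself give openness, because $\mathbb{R}^n$ is noncompact: one must combine the far-region monotonicity of Step~1 with a maximum principle across the transition slab and a limiting argument. Following Farina~\cite{F} (see also \cite{BHM}), if openness failed at $s_1$ there would be $s_k\to s_1$ and points $x_k$ with $w^{s_k}(x_k)<0$ or $z^{s_k}(x_k)<0$; using the uniform behaviour of $(u,v)$ as $x_n\to\pm\infty$ (from Proposition~\ref{prop 6.1} and the graph description following Corollary~\ref{coro 9.2}) together with Step~1, the $e_n$-coordinates of $x_k$ stay bounded, so only horizontal escape is possible. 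Translating in the horizontal directions, the shifted solutions converge in $C^2_{loc}$ to an entire solution $(\tilde u,\tilde v)$ of \eqref{equation} with the same linear growth and the same asymptotic cone $e_n$, for which the limiting $\tilde w\geq 0$ and $\tilde z\geq 0$ vanish at an interior point; the strong maximum principle then forces $\tilde u$ or $\tilde v$ to be periodic along $\tau$, contradicting its asymptotics. Hence $T$ is open, $T=(0,\infty)$, and letting $s\to 0^+$ in the two inequalities completes the proof.
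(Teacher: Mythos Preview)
Your overall architecture — sliding method, Step~1 for large $s$, Step~2 by a closed/open continuation in $s$ — is exactly the paper's. Step~1 is sketchy but on the right track and matches the paper's Step~1 (including the translated-limit maximum principle on $\Omega^-$).

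The genuine gap is in your openness argument in Step~2. You assert that if $w^{s_k}(x_k)<0$ (or $z^{s_k}(x_k)<0$) then ``the $e_n$-coordinates of $x_k$ stay bounded, so only horizontal escape is possible''. For $w^{s_k}(x_k)<0$ you can indeed exclude $x_k\in\Omega^+$ (the segment $x_k+[0,s_k]\tau$ stays in $\Omega^+$ and $\partial_\tau u\ge 1/4$ there), but you \emph{cannot} exclude $x_k$ deep in $\Omega^-$: there $u(x_k)$ and $u(x_k+s_k\tau)$ are both exponentially small with no a~priori sign for their difference. In that regime $v(x_k)\to+\infty$, the translated solutions $u(\cdot+x_k),v(\cdot+x_k)$ do \emph{not} converge, and your limiting argument collapses (the limit has $\tilde u\equiv 0$, so ``periodicity of $\tilde u$'' yields no contradiction). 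The symmetric problem occurs for $z^{s_k}(x_k)<0$ with $x_k$ deep in $\Omega^+$.

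The paper circumvents exactly this difficulty by \emph{not} attacking $u^t\ge u$ and $v^t\le v$ simultaneously in the openness step. It first proves the scalar bound $\inf_{\mathbb{R}^n}\bigl[(u^{t_0}-v^{t_0})-(u-v)\bigr]>0$: since Lemma~\ref{lem 9.1} gives $\partial_\tau(u-v)\ge 1/2$ in \emph{both} $\Omega^+$ and $\Omega^-$, the strict gap holds automatically in $\{|u-v|>M\}$, and only the slab $\{|u-v|\le M\}$ (where $u+v$ is bounded by Corollary~\ref{coro 5.10}) needs the translated-limit compactness argument. By Lipschitz continuity this persists for $t$ slightly below $t_0$, i.e.\ $\varphi^t>\varphi$ with $\varphi=u-v$. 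Then the paper rewrites $\Delta u=u(u-\varphi)^2$, compares with $\Delta u^t=u^t(u^t-\varphi^t)^2$ on $\{u-v<M\}$, and uses $\varphi^t>\varphi$ (rather than $v^t\le v$) to run the maximum principle: this \emph{decouples} the comparison of $u$ from that of $v$ and removes the circularity your system for $(w^s,z^s)$ suffers from. You should either adopt this $\varphi$-trick or supply a separate argument ruling out the deep-$\Omega^\mp$ escape.
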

\begin{proof}
For $t\geq 0$, define
\[u^t(x):=u(x+t\tau),\ \ \ \  v^t(x):=v(x+t\tau).\]
We will prove that for all $t\geq 0$,
\begin{equation}\label{9.1}
u^t(x)\geq u(x),\ \ \ \ v^t(x)\leq v(x)~~\text{in}~~\mathbb{R}^n.
\end{equation}

By Corollary \ref{coro 9.2}, $u_\tau\geq 0$ in $\{u-v>M\}$ and
$v_\tau\leq 0$ in $\{u-v\leq -M\}$. So for every $t\geq 0$, $u^t\geq
u$ in $\{u-v>M\}$ and $v^t\leq v$ in $\{u-v\leq -M\}$.
\par
{\bf Step 1.} \eqref{9.1} holds for $t$ large enough.\\
By the previous lemma, for $x\in\{-M\leq u-v\leq M\}$, if $t\geq CM$, $x+t e_n\in\{u-v\geq
2M\}$. Because $\tau\in C(e_n,3/4)\subset\subset C(e_n,1/4)$, if
$t\geq (C+64)M$, $x+t\tau\in x+CMe_n+C(e_n,1/4)\subset\{u-v\geq
2M\}$. Since $-M\leq u(x)-v(x)\leq M$,
by Lemma \ref{lem 7.1} we have
\[u(x)\leq M+v(x)\leq M+\frac{C}{u(x)}\leq2M\leq u(x+t\tau).\]
That is, $u^t\geq u$ in $\{-M\leq u-v\leq M\}$.
\par
Next we show that $u^t\geq u$ in $\Omega=\{u-v<-M\}$. By Lemma
\ref{lem 7.1} and the global Lipschitz continuity, $u$ and $u^t$ are
bounded in $\Omega$. Assume $\inf_\Omega(u^t-u)<0$. Take a sequence
of $x_k\in\Omega$ such that
$$\lim\limits_{k\to+\infty}\left(u^t(x_k)-u(x_k)\right)=\inf_\Omega\left(u^t-u\right)<0.$$
Because
\[\liminf\limits_{k\to+\infty}u(x_k)\geq
\lim\limits_{k\to+\infty}\left(u(x_k)-u^t(x_k)\right)>0,\] there exists a
$\delta>0$ such that $u(x_k)\geq\delta$ for all $k$. By Lemma
\ref{lem 7.1}, $v(x_k)\leq C/\delta$. Now define
\[u_k(x)=u(x_k+x),\ \ \ \ v_k(x)=v(x_k+x).\]
Since $u_k(0)$ and $v_k(0)$ are uniformly bounded and
$\sup\limits_{\mathbb{R}^n}\left(|\nabla u_k|+|\nabla v_k|\right)\leq C$, there
exist two continuous functions $u_\infty$ and $v_\infty$ such that
\[u_k\to u_\infty, \ \ \ v_k\to v_\infty~~\text{uniformly on any compact set of}~~\mathbb{R}^n.\]
$(u_\infty, v_\infty)$ is a solution of \eqref{equation}. They also
satisfy $u_\infty^t(x)=u_\infty(x+t\tau)\geq u_\infty(x)$ in
$\{u_\infty-v_\infty\geq -M\}$ and $v^t_\infty(x)\leq v_\infty(x)$
in $\{u_\infty-v_\infty\leq -M\}$. Then
\begin{equation*}
\left\{ \begin{aligned}
 &\Delta u_\infty=u_\infty v_\infty^2~~\text{in}~~\{u_\infty-v_\infty<-M\}, \\
 &\Delta u_\infty^t=u_\infty^t(v_\infty^t)^2\leq u_\infty^tv_\infty^2~~\text{in}~~\{u_\infty-v_\infty<-M\}, \\
 & u_\infty^t\geq u_\infty~~\text{on}~~\partial\{u_\infty-v_\infty< -M\}.
                          \end{aligned} \right.
\end{equation*}
Moreover,
\[u_\infty^t(0)-u_\infty(0)=\inf\limits_{\{u_\infty-v_\infty< -M\}}\left(u_\infty^t-u_\infty\right)<0.\]
A direct application of the maximum principle gives a
contradiction.
\par
We can use similar method to show that $v^t\leq v$ in $\{u-v\geq
-M\}$.
\par
{\bf Step 2.} By the result in Step 1, we can define
 \[t_0:=\inf\{t: \eqref{9.1}~~\text{holds for all}~~s>t\}.\]
Assume $t_0>0$. Since $u^{t_0}\geq u$ and $v^{t_0}\leq v$ in
$\mathbb{R}^n$, we have
\begin{equation*}
\left\{ \begin{aligned}
 &\Delta u^{t_0}=u^{t_0} (v^{t_0})^2\leq u^{t_0}v^2, \\
 &\Delta v^{t_0}=v^{t_0} (u^{t_0})^2\geq v^{t_0}u^2.
                          \end{aligned} \right.
\end{equation*}
Comparing with $u$ and $v$, the strong maximum principle implies
that
\[u^{t_0}>u, \ \ \ v^{t_0}<v~~\text{strictly in}~~\mathbb{R}^n.\]
Here we need to note that, there is a constant $\delta>0$ such that
in $\{u-v>M\}$, by Corollary \ref{coro 9.2}
\[u^{t_0}(x)-u(x)=\int_0^{t_0}\tau\cdot\nabla u(x+t\tau)dt\geq\delta.\]
And similarly in $\{u-v<-M\}$,
\[v^{t_0}(x)-v(x)\leq-\delta.\]
These two estimates imply that
\[u^{t_0}-v^{t_0}\geq u-v+\delta~~\text{in}~~\{|u-v|>M\}.\]
Then similar to the method in Step 1, we can prove
\[\inf\limits_{\mathbb{R}^n}\left[\left(u^{t_0}-v^{t_0}\right)-\left(u-v\right)\right]>0.\]
By the global Lipschitz continuity of $u-v$, there exists a
$\varepsilon>0$, such that for all $t\in(t_0-\varepsilon,t_0)$,
\begin{equation}\label{8.2}
\inf\limits_{\mathbb{R}^n}\left[\left(u^t-v^t\right)-\left(u-v\right)\right]>0.
\end{equation}
We claim that for such $t$, \eqref{9.1} still holds. We only prove
the inequality for $u$ and the other one is similar. As before we
only need to consider the set $\{u-v<M\}$. Here we use
\begin{equation*}
\left\{ \begin{aligned}
 &\Delta u=u(u-\varphi)^2~~\text{in}~~\{u-v<M\}, \\
 &\Delta u^t=u^t(u^t-\varphi^t)^2~~\text{in}~~\{u-v<M\}, \\
 & u^t\geq u~~\text{on}~~\partial\{u-v<M\},
                          \end{aligned} \right.
\end{equation*}
where $\varphi=u-v$ and $\varphi^t=u^t-v^t$. Assume that
\[\inf_{\{u-v<M\}}\left(u^t-u\right)<0.\]
 First let us assume this minimum is attained at an interior point
$x_0\in\{u-v<M\}$. The maximum principle implies
\[0\leq\Delta(u^t-u)(x_0)=u^t(x_0)(u^t(x_0)-\varphi^t(x_0))^2-u(x_0)(u(x_0)-\varphi(x_0))^2<0,\]
which is a contradiction. Here we have used the following facts:
$0<u^t(x_0)<u(x_0)$, $u(x_0)>\varphi(x_0)$,
$u^t(x_0)>\varphi^t(x_0)$ and by \eqref{8.2}, $\varphi(x_0)<\varphi^t(x_0)$.

Next, if the minimum is not attained, we can use the method in Step
1 to reduce this case to the above case and we get a contradiction again. In conclusion we must have
\[u^t\geq u~~\text{in}~~\{u-v<M\}.\]
Hence for $t\in(t_0-\varepsilon,t_0)$, we still have \eqref{9.1}.
This is a contradiction with the definition of $t_0$ if $t_0>0$. In
other words, $t_0=0$.
\end{proof}

\section{Enlargement of the cone of monotonicity}
\numberwithin{equation}{section}
 \setcounter{equation}{0}

In this section $(u,v)$ still denotes a solution of \eqref{equation} on
$\mathbb{R}^n$, satisfying the linear growth condition \eqref{linear growth} and the minimizing condition \eqref{local energy minimizer}.
In the previous section we have proved that for every $\tau\in
C(e_n,3/4)$,
\[\frac{\partial u}{\partial\tau}\geq 0,\ \ \ \ \frac{\partial v}{\partial\tau}\leq 0~~\text{in}~~\mathbb{R}^n.\]
Now we enlarge this cone of monotonicity.
\begin{prop}
For every unit vector $\tau$ such that $\tau\cdot e_n=0$,
\[\frac{\partial u}{\partial\tau}\equiv0,\ \ \ \ \frac{\partial v}{\partial\tau}\equiv 0~~\text{in}~~\mathbb{R}^n.\]
\end{prop}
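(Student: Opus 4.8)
The plan is to enlarge the cone of monotonicity of Proposition \ref{lem 9.5} all the way to the closed half-space of directions $\{\tau\cdot e_n\ge 0\}$; the asserted identities then follow at once, since applying the enlarged monotonicity to a unit vector $\tau$ with $\tau\cdot e_n=0$ and to $-\tau$ gives both $\tau\cdot\nabla u\ge 0$ and $\tau\cdot\nabla u\le 0$, hence $\tau\cdot\nabla u\equiv 0$, and likewise $\tau\cdot\nabla v\equiv 0$. Fix such a $\tau$ and set $\tau_s=\cos s\,e_n+\sin s\,\tau$ for $s\in[0,\pi/2]$, so $\tau_0=e_n$ and $\tau_{\pi/2}=\tau$. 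Let
\[A=\{\,s\in[0,\pi/2]:\ \tau_s\cdot\nabla u\ge 0\ \text{and}\ \tau_s\cdot\nabla v\le 0\ \text{in}\ \mathbb{R}^n\,\}.\]
By Proposition \ref{lem 9.5} and $e_n\in C(e_n,3/4)$ we have $[0,\arccos(3/4)]\subseteq A$, and $A$ is closed by the continuity of $\nabla u,\nabla v$. From the identity $\tau_s=\tfrac{\sin(s_0-s)}{\sin s_0}\,e_n+\tfrac{\sin s}{\sin s_0}\,\tau_{s_0}$, valid with nonnegative coefficients for $0\le s\le s_0\le\pi/2$, one sees $A=[0,s^*]$ with $s^*:=\sup A\in A$. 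It therefore suffices to prove $s^*=\pi/2$.

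Assume $s^*<\pi/2$, so $c_0:=\cos s^*>0$. Put $w=\tau_{s^*}\cdot\nabla u\ge 0$ and $z=\tau_{s^*}\cdot\nabla v\le 0$; differentiating \eqref{equation} along $\tau_{s^*}$ yields $\Delta w=v^2w+2uvz$ and $\Delta z=u^2z+2uvw$, hence $(\Delta-v^2)w=2uvz\le 0$ and $(\Delta-u^2)z=2uvw\ge 0$. By the strong maximum principle, either $w\equiv 0$ or $w>0$ in $\mathbb{R}^n$, and either $z\equiv 0$ or $z<0$ in $\mathbb{R}^n$. If $w\equiv 0$, then $2uvz\equiv 0$, and since $u,v>0$ everywhere we get $z\equiv 0$; thus $\tau_{s^*}\cdot\nabla(u-v)\equiv 0$. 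But then, using $|\tau_{s^*}|=1$ and \eqref{6.2} of Proposition \ref{prop 6.1},
\[c_0^2\ \le\ \frac{1}{|B_R(0)|}\int_{B_R(0)}|\nabla(u-v)-e_n|^2\ \le\ CR^{-1/2}\ \xrightarrow[R\to\infty]{}\ 0,\]
a contradiction. The case $z\equiv 0$ is excluded the same way (it forces $w\equiv 0$). Hence $w>0$ and $z<0$ throughout $\mathbb{R}^n$.

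It remains to show that the strict inequalities $w>0$, $z<0$ force $[s^*,s^*+\varepsilon]\subseteq A$ for some $\varepsilon>0$, contradicting $s^*=\sup A$; here I reuse the mechanism of Section 8. Decompose $\mathbb{R}^n=\{u-v\ge M\}\cup\{|u-v|<M\}\cup\{u-v\le -M\}$ for a large constant $M$. On $\{u-v\ge M\}$ the quantitative forms of Lemma \ref{lem 9.1} and Corollary \ref{coro 9.2} give $|\nabla u-e_n|\le\eta(M)$ with $\eta(M)\to 0$ as $M\to\infty$; choosing $M$ so large that $\eta(M)<\tfrac12 c_0$ forces $\tau_s\cdot\nabla u\ge\cos s-\eta(M)>0$ for all $s$ near $s^*$, and symmetrically $\tau_s\cdot\nabla v<0$ on $\{u-v\le -M\}$, where additionally $\nabla u$ is exponentially small by Lemma \ref{estimate exponential decay}. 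The "wrong sign" on each outer region (the exponentially small component) and the passage across the slab $\{|u-v|<M\}$ are then handled exactly as in the proof of Proposition \ref{lem 9.5}: translating the surviving inequalities, comparing with the re-written equations $\Delta u=u(u-\varphi)^2$ with $\varphi=u-v$, and invoking the strong maximum principle. The one point needing care is the slab, which by Lemma \ref{lem 9.4} has bounded width but is unbounded in $x'$: if the inequalities failed at points $x_k$ along a sequence $s_k\downarrow s^*$, one recenters at nearby points $y_k\in\{u=v\}$, extracts a locally uniform limit $(u_\infty,v_\infty)$ using the uniform global Lipschitz bound (Theorem \ref{thm 6.1}) and the uniform bounds at points of $\{u=v\}$ (Lemma \ref{lem 5.7}, Corollary \ref{coro 5.10}), observes that $(u_\infty,v_\infty)$ is a nontrivial solution inheriting \eqref{6.2} with the same $e_n$, and finds that $\tau_{s^*}\cdot\nabla u_\infty$ attains the interior minimum value $0$; the strong maximum principle then gives $\tau_{s^*}\cdot\nabla u_\infty\equiv 0$, whence $\tau_{s^*}\cdot\nabla(u_\infty-v_\infty)\equiv 0$, contradicting \eqref{6.2} for $(u_\infty,v_\infty)$ exactly as above.

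The main obstacle is this last continuation step on the non-compact transition region: organizing the three-region maximum-principle bookkeeping of Section 8 for the tilted directions $\tau_s$ (in particular checking that the strict inequalities $w>0$, $z<0$ together with \eqref{6.2} are precisely what is needed to re-open the monotonicity past $s^*$), and making the recentering/compactness argument near $\{u=v\}$ rigorous. Everything outside a slab of bounded width is comparatively routine once $M$ is chosen large in terms of $c_0=\cos s^*$.
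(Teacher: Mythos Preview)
Your proposal is correct and follows essentially the same route as the paper. Both arguments parametrize the directions by $\tau(\theta)=\cos\theta\,e_n+\sin\theta\,\tau$, show the admissible set is a closed interval $[0,s^*]$, assume $s^*<\pi/2$, apply the strong maximum principle to the linearized system
\[
\Delta w=v^2w+2uvz,\qquad \Delta z=u^2z+2uvw,
\]
and then run a compactness argument on the bounded-width slab $\{|u-v|\le M\}$ (Lemma~\ref{lem 9.4}) together with the sliding machinery of Proposition~\ref{lem 9.5} to push past $s^*$.

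The only noteworthy differences are organizational. First, to exclude the degenerate alternative $w\equiv 0$ (equivalently $\tau_{s^*}\cdot\nabla(u-v)\equiv 0$), the paper uses Lemma~\ref{lem 9.1} on the outer region $\{|u-v|\ge M\}$, whereas you invoke Proposition~\ref{prop 6.1} directly; both are valid, and yours is arguably cleaner. Second, the paper packages the compactness step as a quantitative claim---there exists $\delta>0$ with $\partial(u-v)/\partial\tau_{s^*}\ge\delta$ \emph{everywhere}---then perturbs by the global Lipschitz bound to obtain $\partial(u-v)/\partial\tau_s\ge\delta/2$ for $s$ close to $s^*$, which is exactly the input \eqref{8.2} needed to rerun Step~2 of Proposition~\ref{lem 9.5}. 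Your version instead argues by contradiction with $s_k\downarrow s^*$ and a recentered limit; this is equivalent in content, but note that the uniform $\delta$ is what cleanly localizes the failure points to the slab and feeds the sliding argument, so when you fill in the sketch you will effectively be reproving the paper's Claim.
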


This means that $u$ and $v$ depend only on the $x_n$ variable and
finishes the proof of Theorem \ref{main result}.
\begin{proof}
For $\theta\in[0,\pi/2]$, denote
\[\tau(\theta)=\cos(\theta) e_n+\sin(\theta)\tau.\]
Define
\[I:=\{\theta\in[0,\frac{\pi}{2}]: \frac{\partial u}{\partial\tau(\theta)}\geq
0,\ \ \frac{\partial v}{\partial\tau(\theta)}\leq
0~~\text{in}~~\mathbb{R}^n\}.\]
 By Proposition \ref{lem 9.5},
$[0,\pi/100]\subset I$. We want to prove $I=[0,\pi/2]$. First we
have\\
{\bf Claim.} For every $\theta_0\in I$ and $\theta_0<\pi/2$, there
exists a constant $\delta>0$ such that
\[\frac{\partial (u-v)}{\partial\tau(\theta_0)}\geq \delta~~\text{in}~~\mathbb{R}^n.\]
By Lemma \ref{lem 9.1} (if we choose $M$ large enough, depending on
$\tau(\theta_0)\cdot e_n$), in $\{|u-v|\geq M\}$,
\[|\nabla(u-v)-e_n|\leq \frac{\tau(\theta_0)\cdot e_n}{2}.\]
Hence in $\{|u-v|\geq M\}$,
\[\frac{\partial (u-v)}{\partial\tau(\theta_0)}\geq \frac{\tau(\theta_0)\cdot e_n}{2}>0.\]
So the problem lies in the set $\{|u-v|\leq M\}$. Assume
\[\inf_{\{|u-v|\leq M\}}\frac{\partial (u-v)}{\partial\tau(\theta_0)}=0.\]
Take a minimizing sequence $x_k$ and proceed as in the proof of
Proposition \ref{lem 9.5}, we get a solution of \eqref{equation},
$(u_\infty, v_\infty)$ such that
\begin{enumerate}
\item $\frac{\partial u_\infty}{\partial\tau(\theta_0)}\geq 0$,
$\frac{\partial v_\infty}{\partial\tau(\theta_0)}\leq 0$ in
$\mathbb{R}^n$;
\item $\frac{\partial (u_\infty-v_\infty)}{\partial\tau(\theta_0)}\geq \frac{\tau(\theta)\cdot e_n}{2}>0$
in $\{|u-v|\geq M\}$;
\item $\frac{\partial
(u_\infty-v_\infty)}{\partial\tau(\theta_0)}(0)=0$, i.e.
$\frac{\partial u_\infty}{\partial\tau(\theta_0)}(0)=\frac{\partial
v_\infty}{\partial\tau(\theta_0)}(0)=0$.
\end{enumerate}
By differentiating the equations of $u_\infty$ and $v_\infty$, we
find
\begin{equation*}
\left\{ \begin{aligned}
 &\Delta \frac{\partial u_\infty}{\partial\tau(\theta_0)}
 =\frac{\partial u_\infty}{\partial\tau(\theta_0)}v_\infty^2
 +2u_\infty v_\infty\frac{\partial v_\infty}{\partial\tau(\theta_0)}, \\
 &\Delta \frac{\partial v_\infty}{\partial\tau(\theta_0)}
 =\frac{\partial v_\infty}{\partial\tau(\theta_0)}u_\infty^2
 +2u_\infty v_\infty\frac{\partial u_\infty}{\partial\tau(\theta_0)}.
                          \end{aligned} \right.
\end{equation*}
By the sign of $\frac{\partial u_\infty}{\partial\tau(\theta_0)}$
and $\frac{\partial v_\infty}{\partial\tau(\theta_0)}$, we can apply
the strong maximum principle to deduce that either
\[\frac{\partial u_\infty}{\partial\tau(\theta_0)}\equiv 0,\ \ \ \
\frac{\partial v_\infty}{\partial\tau(\theta_0)}\equiv
0~~\text{in}~~\mathbb{R}^n,\] which contradicts (2) in the above, or
\[\frac{\partial u_\infty}{\partial\tau(\theta_0)}> 0,\ \ \ \
\frac{\partial v_\infty}{\partial\tau(\theta_0)}<
0~~\text{in}~~\mathbb{R}^n,\] which contradicts (3) in the above.
This proves the claim.
\par
By the boundedness of $\nabla(u-v)$ and the above claim, for
$\theta_0\in I$ and $\theta_0<\pi/2$, there exists a $\varepsilon>0$
such that for every $\theta\in(\theta_0-\varepsilon,\theta_0]$,
\[\frac{\partial (u-v)}{\partial\tau(\theta)}\geq \frac{\delta}{2}~~\text{in}~~\mathbb{R}^n.\]
Similar to Step 2 in the proof of Proposition \ref{lem 9.5}, for
such $\theta$ we still have
\[ \frac{\partial u}{\partial\tau(\theta)}\geq 0,\ \ \ \  \frac{\partial
v}{\partial\tau(\theta)}\leq 0~~\text{in}~~\mathbb{R}^n.\] That is,
$(\theta_0-\varepsilon,\theta_0)\subset I$. By continuation, we get
$I=[0,\pi/2]$. In particular,
\[ \frac{\partial u}{\partial\tau}\geq 0,\ \ \ \  \frac{\partial
v}{\partial\tau}\leq 0~~\text{in}~~\mathbb{R}^n.\]
 Replacing $\tau$
by $-\tau$, we get
\[\frac{\partial u}{\partial\tau}\equiv0,\ \ \ \ \frac{\partial v}{\partial\tau}\equiv 0~~\text{in}~~\mathbb{R}^n.\qedhere\]
\end{proof}

\appendix
\section{Appendix}
\numberwithin{equation}{section}
 \setcounter{equation}{0}
Here we give a proof of Theorem \ref{lem uniform Holder}. Since the
method is exactly the one given in \cite{NTTV}, we only show the
necessary modifications.

Let $(u_\kappa,v_\kappa)$ be given as in Theorem \ref{lem uniform
Holder} and fix an $\alpha\in(0,1)$. Without loss of
generality, we assume that $u_\kappa$ and $v_\kappa$ are defined in
$B_3(0)$, and uniformly bounded there.

Take $\eta\in C^\infty(\mathbb{R}^n)$ such that $\eta\equiv 1$ in
$B_1(0)$, $\{\eta>0\}=B_2(0)$, and $\eta=(2-|x|)^2$ in
$B_2(0)\setminus B_{3/2}(0)$. By this choice we get a constant $C$
such that
\[|\nabla\log\eta|\leq C\text{dist}(x,\partial B_2(0))^{-1}~~\text{in}~~B_2(0).\]
Then for any $x\in B_2(0)$, by taking
$\rho:=\frac{1}{2}\text{dist}(x,\partial B_2(0))$, we have
\begin{equation}\label{A01}
\sup_{B_\rho(x)}\eta\leq C\inf_{B_\rho(x)}\eta,
\end{equation}
where $C$ is independent of $\rho$.

 Denote
\[\hat{u}_\kappa=u_\kappa\eta,\ \ \ \ \hat{v}_\kappa=v_\kappa\eta.\]
Assume there exist $x_\kappa,y_\kappa\in B_2(0)$ such that as
$\kappa\to+\infty$,
\begin{equation}\label{A1}
L_\kappa=\frac
{|\hat{u}_\kappa(x_\kappa)-\hat{u}_\kappa(y_\kappa)|+|\hat{v}_\kappa(x_\kappa)-\hat{v}_\kappa(y_\kappa)|}
{|x_\kappa-y_\kappa|^\alpha}=\max_{x,y\in B_2(0)}\frac
{|\hat{u}_\kappa(x)-\hat{u}_\kappa(y)|+|\hat{v}_\kappa(x)-\hat{v}_\kappa(y)|}
{|x-y|^\alpha}\to+\infty.
\end{equation}
Note that because $u_\kappa$ and $v_\kappa$ are uniformly bounded,
as $\kappa\to+\infty$, $|x_\kappa-y_\kappa|\to 0$.

Define
\[\widetilde{u}_\kappa(x):=L_\kappa^{-1}r_\kappa^{-\alpha}\hat{u}_\kappa(x_\kappa+r_\kappa
x),\ \ \ \ \widetilde{v}_\kappa(x):=L_\kappa^{-1}r_\kappa^{-\alpha}\hat{v}_\kappa(x_\kappa+r_\kappa x),
\]
and
\[\bar{u}_\kappa(x):=L_\kappa^{-1}r_\kappa^{-\alpha}u_\kappa(x_\kappa+r_\kappa
x)\eta(x_\kappa),\ \ \ \ \bar{v}_\kappa(x):=L_\kappa^{-1}r_\kappa^{-\alpha}v_\kappa(x_\kappa+r_\kappa x)\eta(x_\kappa),
\]
where $r_\kappa\to 0$ will be determined later. These functions are
defined in $\Omega_\kappa=r_\kappa^{-1}(B_2(0)-x_\kappa)$. Note
that $\Omega_\kappa$ converges to $\Omega_\infty$, which may be the
entire space or an half space.

We first present some facts about these rescaled functions, which
will be used below. By definition we have
\begin{equation}\label{A02}
\widetilde{u}_\kappa(x)=\frac{\eta(x_\kappa+r_\kappa
x)}{\eta(x_\kappa)}\bar{u}_\kappa(x),
\end{equation}
and
\begin{eqnarray}\label{A03}
\nabla\widetilde{u}_\kappa(x)&=&\frac{r_\kappa\nabla\eta(x_\kappa+r_\kappa
x)}{\eta(x_\kappa)}\bar{u}_\kappa(x)+\frac{\eta(x_\kappa+r_\kappa
x)}{\eta(x_\kappa)}\nabla\bar{u}_\kappa(x)\\\nonumber
&=&L_\kappa^{-1}r_\kappa^{1-\alpha}u_\kappa(x_\kappa+r_\kappa
x)\nabla\eta(x_\kappa+r_\kappa x) +\frac{\eta(x_\kappa+r_\kappa
x)}{\eta(x_\kappa)}\nabla\bar{u}_\kappa(x)\\\nonumber
&=&\frac{\eta(x_\kappa+r_\kappa
x)}{\eta(x_\kappa)}\nabla\bar{u}_\kappa(x)+O(L_\kappa^{-1}r_\kappa^{1-\alpha}).
\end{eqnarray}
These relations also hold for $\widetilde{v}_\kappa$ and
$\bar{v}_\kappa$.

By \eqref{A1}, we have
\begin{equation}\label{A2}
1=\frac
{|\widetilde{u}_\kappa(0)-\widetilde{u}_\kappa(z_\kappa)|+|\widetilde{v}_\kappa(0)-\widetilde{v}_\kappa(z_\kappa)|}
{|z_\kappa|^\alpha}=\max_{x,y\in \Omega_\kappa}\frac
{|\widetilde{u}_\kappa(x)-\widetilde{u}_\kappa(y)|+|\widetilde{v}_\kappa(x)-\widetilde{v}_\kappa(y)|}
{|x-y|^\alpha}.
\end{equation}
Here $z_\kappa=\frac{y_\kappa-x_\kappa}{r_\kappa}$.

 Next, because
$\eta$ is Lipschitz continuous in $\overline{B_2(0)}$, for
$x\in\Omega_\kappa$, we have a constant $C$ which depends only on
$\sup_{B_2(0)}(u_\kappa+v_\kappa)$ and the Lipschitz constant of
$\eta$, such that
\begin{eqnarray}\label{A3}
|\widetilde{u}_\kappa(x)-\bar{u}_\kappa(x)|+|\widetilde{v}_\kappa(x)-\bar{v}_\kappa(x)|&\leq&CL_\kappa^{-1}r_\kappa^{-\alpha}|\eta(x_\kappa+r_\kappa
x)-\eta(x_\kappa)|\\\nonumber &\leq&
CL_\kappa^{-1}r_\kappa^{1-\alpha}|x|.
\end{eqnarray}
This converges to $0$ uniformly on any compact set as
$\kappa\to+\infty$. By the Lipschitz continuity of $\eta$, we also
have
\begin{equation}\label{A4}
\widetilde{u}_\kappa(x)+\widetilde{v}_\kappa(x)\leq
CL_\kappa^{-1}r_\kappa^{1-\alpha}\text{dist}(x,\partial\Omega_\kappa).
\end{equation}
Finally, we note that
$(\bar{u}_\kappa,\bar{v}_\kappa)$ satisfies \eqref{equation scaled}
with the parameter $\kappa
L_\kappa^2r_\kappa^{2+\alpha}\eta(x_\kappa)^{-2}$. Denote this
constant by $M_\kappa$.

The next three lemmas correspond to Lemma 3.4, 3.5 and 3.6 in
\cite{NTTV} respectively.
\begin{lem}
If there exists a constant $C>0$ such that $M_\kappa\geq
\frac{1}{C}$ and $\frac{|x_\kappa-y_\kappa|}{r_\kappa}\leq C$, then
$\widetilde{u}_\kappa(0)+\widetilde{v}_\kappa(0)$ is uniformly
bounded.
\end{lem}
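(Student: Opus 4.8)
The plan is to argue by contradiction. Suppose that, along a subsequence, $\widetilde u_\kappa(0)+\widetilde v_\kappa(0)\to+\infty$; by the symmetry of the system we may assume $\widetilde u_\kappa(0)\ge\widetilde v_\kappa(0)$, so that $b_\kappa:=\widetilde u_\kappa(0)\to+\infty$. Writing $\beta_\kappa(x):=\eta(x_\kappa+r_\kappa x)/\eta(x_\kappa)$, so that \eqref{A02} reads $\widetilde u_\kappa=\beta_\kappa\bar u_\kappa$ and $\beta_\kappa(0)=1$, we also have $\bar u_\kappa(0)=b_\kappa$. The first thing to check is that the blow-up domain does not degenerate: if $\text{dist}(0,\partial\Omega_\kappa)$ stayed bounded along a further subsequence, then \eqref{A4} at $x=0$ would force $b_\kappa\le CL_\kappa^{-1}r_\kappa^{1-\alpha}\text{dist}(0,\partial\Omega_\kappa)\to0$ (recall $L_\kappa\to+\infty$, $r_\kappa\to0$ and $1-\alpha>0$), a contradiction. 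Hence $\text{dist}(0,\partial\Omega_\kappa)\to+\infty$, so for each fixed $R$ one has $B_{2R}(0)\subset\Omega_\kappa$ for large $\kappa$, and by \eqref{A01} together with the bound $|\nabla\log\eta|\le C\text{dist}(\cdot,\partial B_2)^{-1}$ the weight $\beta_\kappa$ lies between $1/C$ and $C$ on $B_{2R}(0)$, with $\|\nabla\beta_\kappa\|_{L^\infty(B_{2R})}$ and $\|\Delta\beta_\kappa\|_{L^\infty(B_{2R})}$ both of size $O(\text{dist}(0,\partial\Omega_\kappa)^{-1})\to0$. Moreover \eqref{A4} also yields the fact that will be decisive below: $b_\kappa/\text{dist}(0,\partial\Omega_\kappa)\to0$.

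Next I would propagate the blow-up. Since by \eqref{A2} the functions $\widetilde u_\kappa,\widetilde v_\kappa$ have $\alpha$-Hölder seminorm at most $1$, a short computation using $\beta_\kappa\to1$, $b_\kappa/\text{dist}(0,\partial\Omega_\kappa)\to0$ and $\widetilde v_\kappa(0)\le\widetilde u_\kappa(0)$ shows that on $B_R(0)$ one has $\tfrac12 b_\kappa\le\bar u_\kappa\le 2b_\kappa$ and $\bar v_\kappa\le Cb_\kappa$ for $\kappa$ large. Feeding $\Delta\bar v_\kappa=M_\kappa\bar u_\kappa^2\bar v_\kappa\ge\tfrac14 M_\kappa b_\kappa^2\,\bar v_\kappa$ together with $0\le\bar v_\kappa\le Cb_\kappa$ into Lemma \ref{estimate exponential decay} gives, on $B_{R/2}(0)$, the bound $\bar v_\kappa\le Cb_\kappa\,e^{-cR\sqrt{M_\kappa}\,b_\kappa}=:\varepsilon_\kappa$, and since $M_\kappa\ge1/C$ the quantity $s_\kappa:=\sqrt{M_\kappa}\,b_\kappa\ge b_\kappa/\sqrt C$ tends to $+\infty$, so $\varepsilon_\kappa\to0$.

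The heart of the argument is to show that the coupling terms $M_\kappa\bar u_\kappa\bar v_\kappa^2$ and $M_\kappa\bar u_\kappa^2\bar v_\kappa$ go to zero uniformly on $B_{R/2}(0)$, \emph{even though $M_\kappa$ is only bounded below}. Here the decay rate in Lemma \ref{estimate exponential decay} is what saves the day: since $\bar u_\kappa\asymp b_\kappa$ and $\bar v_\kappa\le\varepsilon_\kappa$ on $B_{R/2}(0)$, each coupling term is $\le C\,M_\kappa b_\kappa^3\,e^{-cR s_\kappa}$, and using $M_\kappa b_\kappa^3=s_\kappa^2 b_\kappa$ and $b_\kappa\le\sqrt C\,s_\kappa$ this is $\le C\,s_\kappa^3\,e^{-cR s_\kappa}\to0$ — the exponential in $s_\kappa$ overpowers the polynomial factor. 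Hence $\Delta\bar u_\kappa\to0$ and $\Delta\bar v_\kappa\to0$ uniformly on $B_{R/2}(0)$, so by interior gradient estimates $|\nabla\bar u_\kappa|=O(b_\kappa)$ there, and transferring to $\widetilde u_\kappa=\beta_\kappa\bar u_\kappa$ introduces only errors $\|\nabla\beta_\kappa\|_\infty|\nabla\bar u_\kappa|+\|\Delta\beta_\kappa\|_\infty|\bar u_\kappa|=O(b_\kappa/\text{dist}(0,\partial\Omega_\kappa))\to0$; thus $\Delta\widetilde u_\kappa\to0$, $\Delta\widetilde v_\kappa\to0$ uniformly on $B_{R/4}(0)$, while $\widetilde v_\kappa=\beta_\kappa\bar v_\kappa\le C\varepsilon_\kappa\to0$ there. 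Now $W_\kappa:=\widetilde u_\kappa-\widetilde u_\kappa(0)$ satisfies $W_\kappa(0)=0$, $|W_\kappa(x)|\le|x|^\alpha$ on $B_R(0)$ by \eqref{A2}, and $\Delta W_\kappa\to0$ uniformly on $B_{R/4}(0)$; letting $R\to+\infty$ and using interior $W^{2,p}$ and Schauder estimates together with a diagonal extraction, $W_\kappa\to W_\infty$ in $C^1_{loc}(\mathbb{R}^n)$ with $W_\infty$ harmonic, $W_\infty(0)=0$ and $|W_\infty(x)|\le|x|^\alpha$; by the Liouville theorem for harmonic functions with sublinear growth, $W_\infty\equiv0$, hence $\|\nabla W_\kappa\|_{L^\infty(B_1(0))}\to0$, and likewise $\widetilde v_\kappa\to0$ in $C^1_{loc}$, so $\|\nabla\widetilde v_\kappa\|_{L^\infty(B_1(0))}\to0$. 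Finally, choosing $R$ at the outset so that $|z_\kappa|\le C<\tfrac18R$ (possible since $|x_\kappa-y_\kappa|/r_\kappa\le C$), the normalization \eqref{A2} evaluated at the maximizing pair $(0,z_\kappa)$ reads
\[|z_\kappa|^\alpha=|W_\kappa(0)-W_\kappa(z_\kappa)|+|\widetilde v_\kappa(0)-\widetilde v_\kappa(z_\kappa)|\le\bigl(\|\nabla W_\kappa\|_{L^\infty(B_1)}+\|\nabla\widetilde v_\kappa\|_{L^\infty(B_1)}\bigr)\,|z_\kappa|=:\delta_\kappa\,|z_\kappa|,\]
with $\delta_\kappa\to0$; therefore $|z_\kappa|^{\alpha-1}\le\delta_\kappa$, which is impossible since $|z_\kappa|\le C$ forces $|z_\kappa|^{\alpha-1}\ge C^{\alpha-1}>0$. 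This contradiction proves the lemma.

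The main obstacle, as signalled above, is precisely the absence of an a priori upper bound on $M_\kappa$: the coupling terms have the form $M_\kappa\bar u_\kappa^j\bar v_\kappa^k$ with $\bar u_\kappa\asymp b_\kappa\to+\infty$, so they cannot be discarded by naive size estimates, and there is no obvious way to pass to a limit system. The resolution is the self-improving mechanism of Lemma \ref{estimate exponential decay}: it makes $\bar v_\kappa$ exponentially small with rate $\propto\sqrt{M_\kappa}\,b_\kappa$, which dominates any polynomial factor $M_\kappa b_\kappa^m$; this is what lets the limit of $\widetilde u_\kappa-\widetilde u_\kappa(0)$ be a genuine entire harmonic function, on which the Liouville theorem and the Hölder normalization \eqref{A2} clash. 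The hypothesis $M_\kappa\ge1/C$ is used only to ensure $\sqrt{M_\kappa}\,b_\kappa\to+\infty$, and the hypothesis $|x_\kappa-y_\kappa|/r_\kappa\le C$ only in the last step.
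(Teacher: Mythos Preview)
Your argument is correct and follows essentially the same route as the paper: contradiction from $b_\kappa\to\infty$, use of \eqref{A4} to show the blow-up domain does not degenerate, propagation of the lower bound on $\bar u_\kappa$ via the H\"older estimate \eqref{A2}--\eqref{A3}, exponential decay of $\bar v_\kappa$ from Lemma~\ref{estimate exponential decay} to kill the coupling terms, and a Liouville-type contradiction with the normalization \eqref{A2}. The only organizational difference is that the paper first obtains the uniform gradient bound $|\nabla\widetilde u_\kappa|+|\nabla\widetilde v_\kappa|\le C(R)$ (via \eqref{A03}) and a uniform lower bound on $|z_\kappa|$, then defers the final Liouville step to Lemma~3.4 of \cite{NTTV}, whereas you compute $\Delta\widetilde u_\kappa\to0$ directly and spell that step out in full.
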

\begin{proof}
Assume under the assumptions in this lemma, we have
\[A_\kappa:=\widetilde{u}_\kappa(0)=\bar{u}_\kappa(0)\to+\infty.\]
By \eqref{A4},
\begin{equation}\label{A5}
\text{dist}(0,\partial\Omega_\kappa)\geq CL_\kappa
r_\kappa^{-1+\alpha}A_\kappa,
\end{equation}
 which goes to
$+\infty$ as $\kappa\to+\infty$. Hence, for any $R>0$, if $\kappa$
is sufficiently large, by \eqref{A2} we have
\[\widetilde{u}_\kappa(x)\geq A_\kappa-2R^\alpha,~~\text{in}~~B_{2R}(0).\]
By \eqref{A3}, for fixed $R$, if $\kappa$ is sufficiently large, we
have
\[\bar{u}_\kappa(x)\geq A_\kappa-2R^\alpha-CL_\kappa^{-1}r_\kappa^{1-\alpha}R\geq\frac{A_\kappa}{2},
~~\text{in}~~B_{2R}(0).\]
Hence
\[\Delta \bar{v}_\kappa\geq CM_\kappa A_\kappa^2\bar{v}_\kappa~~\text{in}~~B_{2R}(0).\]
 By \eqref{A2} and Lemma \ref{estimate exponential decay}, we get
\[\sup_{B_R(0)}\bar{v}_\kappa\leq C\left(\sup_{B_R(0)}\bar{v}_\kappa+R^\alpha
+CL_\kappa^{-1}r_\kappa^{1-\alpha}R\right)e^{-\frac{1}{C}RM_\kappa^{1/2}A_\kappa}.\]
This implies
\[\sup_{B_R(0)}\bar{v}_\kappa\leq CR^\alpha e^{-\frac{1}{C}RM_\kappa^{1/2}A_\kappa}.\]
Substituting this into the equations of $\bar{u}_\kappa$ and
$\bar{v}_\kappa$, we see
\[\sup_{B_R(0)}\left(\Delta\bar{u}_\kappa+\Delta\bar{v}_\kappa\right)\to 0.\]
By standard elliptic estimates combined with \eqref{A2},
\eqref{A3} and the equation of $\bar{u}_\kappa$, $\bar{v}_\kappa$, we obtain
\[\sup_{B_{R-1}(0)}\left(|\nabla\bar{u}_\kappa|+|\nabla\bar{v}_\kappa|\right)\leq C(R),\]
where $C(R)$ depends only on $R$. Note that \eqref{A5} implies
\[\text{dist}(x_\kappa,\partial B_2(0))\gg r_\kappa.\]
In particular, $y_\kappa\in B_{\text{dist}(x_\kappa,\partial
B_2(0))/2}(x_\kappa)$. Then by combining \eqref{A01} with
\eqref{A03}, we get for another constant $C(R)$,
\[\sup_{B_{R-1}(0)}\left(|\nabla\widetilde{u}_\kappa|+|\nabla\widetilde{v}_\kappa|\right)\leq C(R).\]
By our assumption, we can take $R$ large so that $|z_\kappa|\leq R$
for all $\kappa>0$. Substituting the above estimate into \eqref{A2},
we see
\[1=\frac
{|\widetilde{u}_\kappa(0)-\widetilde{u}_\kappa(z_\kappa)|+|\widetilde{v}_\kappa(0)-\widetilde{v}_\kappa(z_\kappa)|}
{|z_\kappa|^\alpha}\leq C(R)|z_\kappa|^{1-\alpha},\] which implies a
uniform lower bound of $|z_\kappa|$. Then arguing as in the proof of
Lemma 3.4 in \cite{NTTV}, we can get a contradiction and finish the
proof.
\end{proof}

\begin{lem}
$\kappa
L_\kappa^2|x_\kappa-y_\kappa|^{2+\alpha}\eta(x_\kappa)^{-2}\to+\infty$.
\end{lem}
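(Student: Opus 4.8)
The plan is to argue by contradiction, following the blow-up scheme of \cite{NTTV}. Suppose that, along a subsequence, the quantity $M_\kappa:=\kappa L_\kappa^2|x_\kappa-y_\kappa|^{2+\alpha}\eta(x_\kappa)^{-2}$ stays bounded. I would then fix once and for all the blow-up scale $r_\kappa:=|x_\kappa-y_\kappa|$, so that $M_\kappa$ is exactly the parameter of the rescaled system solved by $(\bar u_\kappa,\bar v_\kappa)$, while $z_\kappa=(y_\kappa-x_\kappa)/r_\kappa$ satisfies $|z_\kappa|=1$. With this choice the normalization \eqref{A2} says that the $\alpha$-Hölder seminorm of $(\widetilde u_\kappa,\widetilde v_\kappa)$ equals $1$ and is attained at the pair $\{0,z_\kappa\}$, which lie at distance one. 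The aim is to extract a locally uniform blow-up limit of $(\widetilde u_\kappa,\widetilde v_\kappa)$ that, on the one hand, is forced to be constant by a Liouville theorem and, on the other hand, is nonconstant because of this seminorm normalization.

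First I would dispose of the geometry of the rescaled domain. Since $L_\kappa\to+\infty$ and $r_\kappa\to0$ with $1-\alpha>0$, the coefficient $L_\kappa^{-1}r_\kappa^{1-\alpha}$ tends to $0$; hence by \eqref{A4}, if $\partial\Omega_\kappa$ remained at bounded distance from the origin we would get $\widetilde u_\kappa+\widetilde v_\kappa\to0$ uniformly on compact sets, contradicting \eqref{A2}. So $\mathrm{dist}(0,\partial\Omega_\kappa)\to+\infty$, $\Omega_\infty=\mathbb R^n$, and by the Lipschitz continuity of $\eta$ the ratio $\eta(x_\kappa+r_\kappa x)/\eta(x_\kappa)\to1$ in $C^1_{loc}$; in view of \eqref{A02}--\eqref{A03} the families $(\widetilde u_\kappa,\widetilde v_\kappa)$ and $(\bar u_\kappa,\bar v_\kappa)$ then share the same locally uniform limits.

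The core of the argument is to make the blow-up sequence bounded on compact sets. When $\liminf_\kappa M_\kappa>0$, the preceding lemma (corresponding to Lemma 3.4 of \cite{NTTV}) applies, since the ratio $|x_\kappa-y_\kappa|/r_\kappa$ equals $1$, and gives that $\widetilde u_\kappa(0)+\widetilde v_\kappa(0)$ is bounded; together with the unit seminorm this makes $\widetilde u_\kappa,\widetilde v_\kappa$ uniformly bounded and uniformly $\alpha$-Hölder continuous on compacts. Standard interior elliptic estimates (the right-hand sides $M_\kappa\bar u_\kappa\bar v_\kappa^2$ being bounded on compacts) then yield a nonnegative limit $(\bar u_\infty,\bar v_\infty)$ on $\mathbb R^n$ with $\Delta\bar u_\infty=m\,\bar u_\infty\bar v_\infty^2$, $\Delta\bar v_\infty=m\,\bar v_\infty\bar u_\infty^2$ where $m=\lim M_\kappa\in[0,+\infty)$, and by \eqref{A2} this limit has at most $\alpha$-Hölder growth, $\bar u_\infty(x)+\bar v_\infty(x)\le C(1+|x|)^\alpha$. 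If $m>0$, the rescaled version of \cite[Proposition~2.7]{NTTV} forces $\bar u_\infty\equiv\bar v_\infty\equiv0$; if $m=0$, the functions are nonnegative and harmonic, hence constant by the classical Liouville theorem. In both cases $(\bar u_\infty,\bar v_\infty)$ is constant, so passing \eqref{A2} to the limit at $0$ and $z_\infty=\lim z_\kappa$ (with $|z_\infty|=1$) gives $|\bar u_\infty(0)-\bar u_\infty(z_\infty)|+|\bar v_\infty(0)-\bar v_\infty(z_\infty)|=1$, which is absurd.

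The remaining case $M_\kappa\to0$ is the main obstacle. Here the preceding lemma does not apply directly, the blow-up sequence need not be bounded, and after the natural renormalization of $(\widetilde u_\kappa,\widetilde v_\kappa)$ by $1+\widetilde u_\kappa(0)+\widetilde v_\kappa(0)$ the unit-seminorm normalization degenerates. I would handle it exactly as in the proof of the preceding lemma and of Lemma 3.5 of \cite{NTTV}: on the region where one component dominates, Lemma \ref{estimate exponential decay} makes the other component exponentially small and drives the renormalized Laplacians to $0$, which after elliptic estimates again produces an entire limiting profile---a pair of nonnegative harmonic functions, or a nonnegative solution of the strong-competition system---of at most $\alpha$-Hölder growth, contradicting the Liouville theorem once more. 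Collecting these contradictions shows that $M_\kappa$ cannot stay bounded, i.e. $\kappa L_\kappa^2|x_\kappa-y_\kappa|^{2+\alpha}\eta(x_\kappa)^{-2}\to+\infty$, which is the assertion.
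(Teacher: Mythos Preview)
Your overall strategy---argue by contradiction, blow up, extract a limit and kill it by a Liouville theorem---is the same as the paper's, but there is one key difference in the set-up, and it is exactly this difference that leaves a gap in your argument.

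The paper does \emph{not} take $r_\kappa=|x_\kappa-y_\kappa|$. Instead, under the contradiction hypothesis it chooses $r_\kappa$ so that
\[
\kappa L_\kappa^2 r_\kappa^{2+\alpha}\eta(x_\kappa)^{-2}=1,
\]
i.e.\ the rescaled parameter is normalized to $M_\kappa=1$. The boundedness assumption then reads $(|x_\kappa-y_\kappa|/r_\kappa)^{2+\alpha}\leq C$, so $|z_\kappa|\leq C$ as well. Both hypotheses of the preceding lemma are therefore satisfied \emph{automatically}, and one gets boundedness of $\widetilde u_\kappa(0)+\widetilde v_\kappa(0)$ with no case split on the size of $M_\kappa$. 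From there the proof is the straight adaptation of Lemma~3.5 in \cite{NTTV} (with an extra argument to rule out the half-space limit and to secure a lower bound on $|z_\kappa|$).

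Your choice $r_\kappa=|x_\kappa-y_\kappa|$ buys you $|z_\kappa|=1$ for free but forces you to confront the possibility $M_\kappa\to0$, and your treatment of that case is not a proof. The sentence ``Lemma~\ref{estimate exponential decay} makes the other component exponentially small'' hides the real issue: with $A_\kappa:=\bar u_\kappa(0)\to+\infty$, the decay estimate produces a factor $e^{-cR\,M_\kappa^{1/2}A_\kappa}$, and when $M_\kappa\to0$ there is no reason for $M_\kappa^{1/2}A_\kappa$ (or $M_\kappa A_\kappa^2$) to diverge. Without that, the Laplacians of the renormalized functions need not tend to zero, and the reduction to a harmonic or segregated limiting profile breaks down. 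One can in principle rescue the argument by further sub-cases on the joint behaviour of $M_\kappa$, $\bar u_\kappa(0)$ and $\bar v_\kappa(0)$, but this is precisely the complication that the paper's normalization $M_\kappa=1$ is designed to avoid.
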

\begin{proof}
By contradiction, assume that $\kappa
L_\kappa^2|x_\kappa-y_\kappa|^{2+\alpha}\eta(x_\kappa)^{-2}$ is
bounded. Take $r_\kappa$ so that
\[\kappa
L_\kappa^2r_\kappa^{2+\alpha}\eta(x_\kappa)^{-2}=1.\] By this
choice, the assumptions of the previous lemma are satisfied. Hence
$\{\widetilde{u}_\kappa(0)\},\{\widetilde{v}_\kappa(0)\}$ are
bounded. By \eqref{A2} and the Ascoli-Arzel\`{a} theorem, up to a
subsequence, there exist $u_\infty$, $v_\infty$ such that
$\widetilde{u}_\kappa\to u_\infty$, $\widetilde{v}_\kappa\to
v_\infty$ uniformly on any compact set of $\Omega_\infty$. By
\eqref{A3}, $\bar{u}_\kappa\to u_\infty$, $\bar{v}_\kappa\to
v_\infty$ uniformly on any compact set of $\Omega_\infty$, too.
Moreover, since $(\bar{u}_\kappa,\bar{v}_\kappa)$ is defined in
$B_{r_\kappa^{-1}}(0)$ and satisfies \eqref{equation scaled} with
parameter $1$, for any $R>0$,
\begin{equation}\label{A6}
\sup_{B_{R}(0)}\left(|\nabla\bar{u}_\kappa|+|\nabla\bar{v}_\kappa|\right)\leq
C(R).
\end{equation}
 If
$\text{dist}(0,\partial\Omega_\kappa)\to+\infty$ (i.e.
$\Omega_\infty$ is the entire space), we can argue as in the
previous lemma to deduce that
\[\sup_{B_{R}(0)}\left(|\nabla\widetilde{u}_\kappa|+|\nabla\widetilde{v}_\kappa|\right)\leq C(R).\]
which, as before, implies a uniform lower bound of $|z_\kappa|$.
Then the following proof is exactly the proof of Lemma 3.5 in
\cite{NTTV}.

Next we consider the case when $\Omega_\infty$ is an half-space, that
is, there exists a constant $C>0$ such that
\[\text{dist}(0,\partial\Omega_\kappa)\leq C.\]
We still need a uniform lower bound of $|z_\kappa|$ as above. Assume
by contrary that $|z_\kappa|\to 0$. Then by \eqref{A1} and
\eqref{A4}, we have
\begin{eqnarray*}
|z_\kappa|^\alpha&=&
|\widetilde{u}_\kappa(0)-\widetilde{u}_\kappa(z_\kappa)|
+|\widetilde{v}_\kappa(0)-\widetilde{v}_\kappa(z_\kappa)|\\
&\leq&
CL_\kappa^{-1}r_\kappa^{1-\alpha}\left(\text{dist}(0,\partial\Omega_\kappa)
+\text{dist}(z_\kappa,\partial\Omega_\kappa)\right)\\
&\leq&
CL_\kappa^{-1}r_\kappa^{1-\alpha}\left(\text{dist}(0,\partial\Omega_\kappa)
+|z_\kappa|\right).
\end{eqnarray*}
Since $|z_\kappa|\to 0$ and $\alpha\in(0,1)$, this implies
\[|z_\kappa|\leq CL_\kappa^{-1}r_\kappa^{1-\alpha}\text{dist}(0,\partial\Omega_\kappa)
<\frac{1}{2}\text{dist}(0,\partial\Omega_\kappa).\] Arguing as
before and using \eqref{A6} and \eqref{A03}, we can get
\[\sup_{B_{\frac{\text{dist}(0,\partial\Omega_\kappa)}{2}}(0)}
\left(|\nabla\widetilde{u}_\kappa|+|\nabla\widetilde{v}_\kappa|\right)\leq C.\]
Then exactly as in the previous lemma, we get a constant $c>0$ such
that $|z_\kappa|\geq c$ for all $\kappa$. So in any case, we must
have a uniform lower bound for $|z_\kappa|$. Combining this with
\eqref{A2} and \eqref{A4}, we get a contradiction directly in this
case.
\end{proof}

\begin{lem}
Let $r_\kappa=|x_\kappa-y_\kappa|$. Then there exist $u_\infty$,
$v_\infty\in C^\alpha(\mathbb{R}^n)$ such that $\bar{u}_\kappa\to
u_\infty$, $\bar{v}_\kappa\to v_\infty$ uniformly on any compact set
of $\mathbb{R}^n$. The limit satisfies $u_\infty v_\infty\equiv 0$.
\end{lem}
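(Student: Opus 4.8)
The plan is to carry out the blow-up argument to its conclusion by showing that, with the choice $r_\kappa=|x_\kappa-y_\kappa|$, the rescaled pairs are precompact with an entire limit that decouples. Note first that with this choice $|z_\kappa|=1$ and $M_\kappa=\kappa L_\kappa^2|x_\kappa-y_\kappa|^{2+\alpha}\eta(x_\kappa)^{-2}$, so the second lemma of this appendix gives $M_\kappa\to+\infty$; in particular $M_\kappa\geq 1$ for $\kappa$ large while $|x_\kappa-y_\kappa|/r_\kappa=1$, so the hypotheses of the first lemma of this appendix hold and $\widetilde u_\kappa(0)+\widetilde v_\kappa(0)$ is uniformly bounded. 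Together with \eqref{A2}, which forces the global $\alpha$-H\"older seminorm of $(\widetilde u_\kappa,\widetilde v_\kappa)$ to equal $1$, the nonnegative functions $\widetilde u_\kappa,\widetilde v_\kappa$ are uniformly bounded and equicontinuous on compact sets, so by Ascoli--Arzel\`a, after passing to a subsequence, $\widetilde u_\kappa\to u_\infty$ and $\widetilde v_\kappa\to v_\infty$ locally uniformly on $\Omega_\infty$ with $u_\infty,v_\infty\in C^\alpha$; by \eqref{A3} the same convergence holds for $\bar u_\kappa,\bar v_\kappa$.

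Next I would rule out that $\Omega_\infty$ is a half-space. If it were, then $\mathrm{dist}(0,\partial\Omega_\kappa)$ would stay bounded along a subsequence, and since $|z_\kappa|=1$ so would $\mathrm{dist}(z_\kappa,\partial\Omega_\kappa)$; then \eqref{A4}, together with $L_\kappa^{-1}r_\kappa^{1-\alpha}\to0$ (the bound already used for \eqref{A3}), would force $\widetilde u_\kappa(0),\widetilde v_\kappa(0),\widetilde u_\kappa(z_\kappa),\widetilde v_\kappa(z_\kappa)\to0$, contradicting the identity $|\widetilde u_\kappa(0)-\widetilde u_\kappa(z_\kappa)|+|\widetilde v_\kappa(0)-\widetilde v_\kappa(z_\kappa)|=|z_\kappa|^\alpha=1$ coming from \eqref{A2}. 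Hence $\mathrm{dist}(0,\partial\Omega_\kappa)\to+\infty$, i.e.\ $\Omega_\infty=\mathbb{R}^n$, so in fact $u_\infty,v_\infty\in C^\alpha(\mathbb{R}^n)$ and $\bar u_\kappa\to u_\infty$, $\bar v_\kappa\to v_\infty$ uniformly on every compact subset of $\mathbb{R}^n$.

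To finish I would prove $u_\infty v_\infty\equiv0$. If not, there would be a point $p$ and a radius $\rho>0$ with $u_\infty,v_\infty\geq c>0$ on $B_{2\rho}(p)$, hence $\bar u_\kappa,\bar v_\kappa\geq c/2$ on $B_\rho(p)$ for $\kappa$ large; since $(\bar u_\kappa,\bar v_\kappa)$ solves \eqref{equation scaled} with parameter $M_\kappa$, this gives $\Delta\bar u_\kappa=M_\kappa\bar u_\kappa\bar v_\kappa^2\geq(c/2)^3M_\kappa$ on $B_\rho(p)$. Comparing $\bar u_\kappa$ on $B_\rho(p)$ with the solution $w$ of $\Delta w=(c/2)^3M_\kappa$ sharing its boundary values, and using that $\bar u_\kappa$ is uniformly bounded (so its average $A$ over $\partial B_\rho(p)$ is bounded), the maximum principle gives $\bar u_\kappa\leq w$ in $B_\rho(p)$, whence by the mean value property $c/2\leq\bar u_\kappa(p)\leq w(p)\leq A-(c/2)^3M_\kappa\rho^2/(2n)\to-\infty$, a contradiction. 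Thus $u_\infty v_\infty\equiv0$, which is the assertion of the lemma; the subsequent contradiction with \eqref{A2} that rules out \eqref{A1} and so establishes Theorem \ref{lem uniform Holder} is then obtained exactly as in \cite{NTTV}.

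The step I expect to be the main obstacle is excluding the half-space alternative for $\Omega_\infty$: in the two preceding lemmas this case could be carried along, but here one needs a genuine entire limit, and it is precisely the decay $L_\kappa^{-1}r_\kappa^{1-\alpha}\to0$ fed into \eqref{A4} that forces $\Omega_\infty=\mathbb{R}^n$.
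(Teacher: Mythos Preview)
Your proof is correct and follows essentially the same route as the paper's: use the second appendix lemma to get $M_\kappa\to+\infty$, invoke the first to bound $\widetilde u_\kappa(0)+\widetilde v_\kappa(0)$, rule out the half-space alternative by combining \eqref{A2} with \eqref{A4} and $L_\kappa^{-1}r_\kappa^{1-\alpha}\to0$, then pass to the limit and show $u_\infty v_\infty\equiv0$. The paper merely sketches the last two steps (writing ``combining \eqref{A2} and \eqref{A4} we can get a contradiction directly'' and ``we can follow the proof of Lemma~3.6 in \cite{NTTV}''), and your expansion of them is accurate; the only cosmetic difference is that for $u_\infty v_\infty\equiv0$ you use a direct subsolution comparison with a quadratic, whereas the paper's toolbox would more naturally suggest Lemma~\ref{estimate exponential decay}, but either device works.
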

\begin{proof}
By the previous lemma, we must have \[M_\kappa=\kappa
L_\kappa^2|x_\kappa-y_\kappa|^{2+\alpha}\eta(x_\kappa)^{-2}\to+\infty.\]
Hence if we choose $r_\kappa=|x_\kappa-y_\kappa|$,
$\{\widetilde{u}_\kappa(0)\},\{\widetilde{v}_\kappa(0)\}$ are
bounded. As before, when $\Omega_\infty$ is a half-space, combining
\eqref{A2} and \eqref{A4} we can get a contradiction directly. So
$\Omega_\infty$ must be the entire space $\mathbb{R}^n$. Then as in
the previous lemma, there exist $u_\infty$, $v_\infty\in
C^\alpha(\mathbb{R}^n)$ such that $\bar{u}_\kappa\to u_\infty$,
$\bar{v}_\kappa\to v_\infty$ uniformly on any compact set of
$\mathbb{R}^n$. We can follow the proof of Lemma 3.6 in
\cite{NTTV} to get the claim.
\end{proof}

Using the Liouville type results in \cite{NTTV} together with the
above three lemmas, we know \eqref{A1} can not be true. So
$\hat{u}_\kappa$ and $\hat{v}_\kappa$ are uniformly bounded in
$C^\alpha(B_2(0))$. Since $\eta\equiv 1$ in $B_1(0)$, it is easily
seen that $u_\kappa$ and $v_\kappa$ are uniformly bounded in
$C^\alpha(B_1(0))$, for any $\alpha\in (0,1)$. This finishes the
proof of Theorem \ref{lem uniform Holder}.

\begin{rmk}
We can allow some additional terms in the right hand side of
equations in \eqref{equation scaled} as in \cite{NTTV}. Theorem
\ref{lem uniform Holder} can also be generalized to the case with
more than two equations, and to the case of parabolic equations
(corresponding to the main result in \cite{D-W-Z 2}). This method
can also be applied to obtain a local estimate near the boundary if the
boundary and boundary values are sufficiently smooth.
\end{rmk}

\end{document}